\documentclass[11pt]{article}
\usepackage{epsfig}
\usepackage{amssymb,amsmath,amsthm,amscd}
\usepackage{latexsym}
 \usepackage{bbm,dsfont}

\newtheorem{thm}{Theorem}[section]
\newtheorem{prop}[thm]{Proposition}

\newtheorem{lem}[thm]{Lemma}

\theoremstyle{definition}

\newtheorem{rem}[thm]{Remark}

\renewcommand{\theequation}{\arabic{section}.\arabic{equation}}

\newcommand{\be}{\begin{equation}}
\newcommand{\ee}{\end{equation}}

\newcommand{\R}{\mathbb{R}}
\newcommand{\N}{\mathbb{N}}
\newcommand{\E}{\mathbb{E}}
\newcommand{\1}{\mathds{1}}

\def \eps {{ \varepsilon }}

\def \o {{  {\mathbb{R}^n} }}

\def \calf {{  {\mathcal{F}} }}

\def \cala {{  {\mathcal{A}}  }}

\def \calg {{  {\mathcal{G}}  }}

\def \caly {{  {\mathcal{Y}}  }}

 \def \call {{  {\mathcal{L}}  }}

\begin{document}

\baselineskip=1.2 \baselineskip

\title{
       {\large\bf {  Large Deviations  of Invariant Measures
  of Fractional 
 Reaction-Diffusion  Equations
 Driven by  Multiplicative  Noise
 on $\R^n$
        }}
      }
 \author{Bixiang Wang$^{1}$\thanks{E-mail address: bwang@nmt.edu}, \,\,
  Lu Yang$^{2}$ \thanks{E-mail address: yanglu@lzu.edu.cn}
\\
\normalsize{$^1$Department of Mathematics, New Mexico Institute of Mining and Technology} \\
\normalsize{Socorro, NM 87801, USA} \\
\normalsize{$^2$School of Mathematics and Statistics, Lanzhou University, 
Lanzhou 730000, China} \\
\\ } 
\date{}
 \maketitle
 
   {\bf Abstract.}
 In this paper, we establish 
large deviations of  invariant measures
for fractional stochastic reaction-diffusion equations 
with polynomial reaction term driven by 
unbounded multiplicative
noise on  $\R^n$. 
Under a  dissipativity condition,
 we first prove  the  Freidlin-Wentzell large deviation
 principle 
 of solution trajectories uniformly  over all initial data in
$L^2(\mathbb R^n)$. The key ingredient is a positive-time 
$L^\infty$-estimate
for differences of controlled solutions which  is independent of 
all initial
data and hence 
yields uniform convergence of controlled stochastic solutions over the entire
phase space. 
Then using
the
uniform trajectory estimates, contraction and a
second-moment estimate,
we further prove the large deviations
of   invariant measures of the 
equation  with a good quasipotential
without introducing weighted spaces or 
establishing exponential tightness.
The non-compactness of Sobolev embeddings
in $\R^n$ 
is circumvented by uniform
spatial tail-ends  estimates.
 This is  the 
first result
 on large deviations 
of invariant measures
 of the equation with multiplicative
noise on unbounded domains.

 \vspace{2mm}
 
 \noindent{\bf Key words.}    Large deviation principle; 
invariant measure; rate function;
weak convergence method; tightness; 
unbounded domain;  tail estimate.

  \noindent{\bf MSC 2020.}   60F10, 60H15, 37L55, 35R60.
 \maketitle 
%
%
%
%
%
%
%
%

\section{Introduction}
\setcounter{equation}{0}
 
       Large deviation principles describe the exponential asymptotics of rare events
in stochastic dynamical systems subject to small random perturbations. For
stochastic partial differential equations, large deviations of solution
trajectories quantify departures from deterministic dynamics over finite time
intervals, whereas large deviations of invariant measures characterize rare
fluctuations in the stationary regime. Connecting these two descriptions requires
control of the solution with respect to its initial state and sufficient
compactness to obtain convergence of solutions from weak
topology to strong topology. These requirements become particularly
demanding on unbounded spatial domains, where standard compactness arguments
available on bounded domains fail. The purpose of this paper is to establish
both types of large deviations for fractional stochastic reaction-diffusion
equations on \(\mathbb R^n\), with particular emphasis on trajectory estimates
that hold uniformly over the entire unweighted phase space.

   Consider the  nonlocal 
   stochastic equation on $\R^n$
   for $t>0$:
\begin{equation}\label{intr1}
\begin{aligned}
du^\varepsilon(t)
+\bigl ( (-\Delta)^\alpha u^\varepsilon(t)
  +\lambda u^\varepsilon(t)
  +F(t,x,u^\varepsilon(t))\bigr )\,dt
  =g(t,x)\,dt
  +\sqrt{\varepsilon}\,\sigma(t,x,u^\varepsilon(t))\,dW(t),
\end{aligned}
\end{equation}
with initial condition
\begin{equation}\label{intr2}
u^\varepsilon(0,x)=u_0(x),
\qquad x\in\mathbb R^n,
\end{equation}
where  
$\lambda \ge 0$ is a damping parameter,
$\eps \in (0,1)$ is the noise intensity,
   $g\in L^2_{loc}(\R; L^2(\o))$
   is an external forcing, 
    $(-\Delta)^\alpha$ with $\alpha
 \in (0,1)$ is  the fractional Laplacian,
 and 
    $W$
 is a
 cylindrical Wiener process  in $\ell^2$
 on
 a  complete filtered probability space 
$(\Omega, \mathcal{F}, 
   \{ { \mathcal{F}} _t\} _{t\ge 0},  P )$.   
 The reaction term $F$  satisfies polynomial coercivity
and a quantitative monotonicity condition
with exponent
$p>4$, 
and the  noise
coefficient
$\sigma$
 has the spatially localized form
$$
\sigma(t,x,u)=\sigma_1(t,x)+\kappa(x)\sigma_2(t,x,u),
\qquad
\kappa\in L^2(\mathbb R^n)\cap L^\infty(\mathbb R^n),
$$
under the assumptions stated in Section 3. In particular, the corresponding
Hilbert--Schmidt operator is globally Lipschitz in $u$, but need not be bounded.
For the large deviations of invariant measures, we 
will consider the autonomous
version of \eqref{intr1} 
 and impose an additional  dissipativity condition.

      Fractional  PDEs
 arise in physics to model
 anomalous diffusion,  nonlocal  interactions
 and memory effects
    \cite{abe1,  garr1,    guan2,  
    jara1, kos1}. 
    Such equations have been studied 
    in   
   \cite{abe1,  caff1,  dine1,  gal1, garr1,  guan2,
    jara1, kos1,  luh1,   ros1, ser1, ser2}  
    for the existence and uniqueness of solutions,
    and in
     \cite{chen1, gu1, luh2,  wan10, wangJDE2019}
     for asymptotic dynamics.
     The goal of this paper is to investigate
     the large deviation principle (LDP) of
     solution trajectories  as well
     as invariant 
     measures of \eqref{intr1}-\eqref{intr2}.

     The Freidlin-Wentzell  LDP   of
  stochastic   systems
  was     introduced in 
   \cite{fre1, fre2} 
   by  the  classical probabilistic method
   and further developed 
   in  
   \cite{bud1,  dup1}   
  by   the weak convergence  method.
  We   are  interested in the uniform 
  Freidlin-Wentzell
  LDP of solutions 
  with respect to initial data
  varying in a subset  of a phase space.
  Such uniform LDP of solutions   is 
  a key 
  to  determine      the 
    exit time
   and exit place of    solutions,
   and is also a  necessary
   tool to establish the LDP of invariant measures.

   Recently,
       the 
      uniform  LDP    of stochastic systems
      has been  investigated  in   
    \cite{bis1,  bud2, 
   cerr1, che1, dem1,  fre2, gau1,
    sal1, sal2, sal3, set1, sow1, wangSD2023,wangSD2026}
and the references therein.
In particular, 
in \cite{sal3}, the author
has established the
uniform   LDP of mild 
solutions of the 
stochastic reaction-diffusion
systems 
over all initial
data in the space of continuous
functions on a bounded domain
by the semigroup argument.
It seems that the
  result of \cite{sal3}
is the only one available in the literature
regarding the   LDP
of solutions  uniformly
over all  initial data in the entire
phase space
for 
reaction-diffusion systems
with unbounded multiplicative noise.
  Such a result is crucial
for proving the LDP of invariant measures
of the  stochastic  systems 
in that case.

Note that  
  the stochastic systems
in \cite{sal3} are defined in a
bounded spatial domain
where Sobolev
embeddings are compact
and 
the spectrum of the
Laplace operator
with standard boundary
conditions is discrete.
In  this case,
 the compactness
 of  Sobolev embeddings
provides an effective mechanism for converting spatial regularity estimates into
compactness of solutions. Together with appropriate temporal estimates, this
mechanism yields compactness of controlled trajectories and of action level
sets. It also supports the construction of compact sets needed in
exponential-tightness arguments for invariant measures. Moreover,
the discrete spectrum
of 
 the Laplacian 
allows
 approximation and noise assumptions to be formulated in terms of its
eigenfunctions. These properties play an important role in many bounded-domain
approaches.

On the other hand, 
there has been 
some  work  on the 
LDP of solutions for stochastic equations
 in unbounded domains.
In  \cite{wangJDE2023,wangSD2026},
the author  established 
the 
LDP of solutions 
uniformly
over bounded initial data
in $L^2(\R^n)$
for fractional
stochastic equations,
using spatial tail estimates to recover compactness.     More
recently,  in \cite{set1},
the author  proved  the  
LDP
of solutions 
uniformly over bounded initial data
in weighted function spaces
for  a  stochastic  equation 
on the real line.

As far as we are aware,
currently, 
there is no    result
on the LDP of solutions
which holds uniformly
for all initial data
for  the stochastic 
fractional  reaction-diffusion 
  equations
defined in  
unbounded domains.
The first 
goal of the present paper
is to fill this gap, and prove 
the uniform LDP of
weak  solutions
of \eqref{intr1}-\eqref{intr2} 
over all initial data
in $L^2(\R^n)$
by the variational argument,
where the spatial domain is
the entire space $\R^n$
and hence  is unbounded.

 Our  arguments
  to deal with the uniform  LDP
  of \eqref{intr1}-\eqref{intr2}
  in   $\R^n $
  are   substantially different
  from \cite{sal3}  in bounded domains
  due to the following issues:

  (i).  The embedding
\(H^\alpha(\mathbb R^n)\hookrightarrow L^2(\mathbb R^n)\) is not compact:
translations of a fixed nonzero function may remain uniformly bounded in
\(H^\alpha(\mathbb R^n)\) while having no strongly convergent subsequence in
\(L^2(\mathbb R^n)\). Thus, regularity estimates alone cannot prevent spatial
mass from escaping to infinity. 
The resulting
obstacles affect both finite-time and stationary large deviations. For
trajectories, one must recover compactness of controlled solutions despite the
noncompact spatial embedding. For invariant measures, one must additionally
control arbitrarily long time horizons and stationary fluctuations outside
neighborhoods of quasipotential level sets.

(ii). The Laplacian and its fractional
powers on \(L^2(\mathbb R^n)\) have continuous spectrum and do not possess the
discrete eigenfunction structure associated with compact resolvent. Therefore,  
the noise assumptions
cannot be    formulated in terms of 
the 
eigenfunctions
of those operators.  

(iii). The Lebesgue spaces $L^p(\R^n)$
are not nested in the sense that
$L^{p_1} (\R^n)$ is not included in
$L^{p_2} (\R^n)$ 
even if  $p_1> p_2$, and hence
the uniform estimates of solutions
in $L^{p} (\R^n)$ cannot be used to
control the solutions in $L^2(\R^n)$
even if $p>2$.

Our first main result establishes 
the  Freidlin-Wentzell uniform LDP for the
solutions of \eqref{intr1}-\eqref{intr2} in
the space
$$
\mathcal E_T
=C([0,T];
L^2(\R^n) 
)
\cap L^2(0,T;H^\alpha (\R^n))
\cap L^p(0,T;L^p(\mathbb R^n)),
$$
uniformly with respect to every initial state $u_0\in 
L^2(\R^n) 
$. This is stronger
than establishing a uniform LDP separately on each bounded subset of 
$L^2(\R^n) 
$,
because the small-noise estimates do not depend on a bound for the initial norm.
To the best of our knowledge, this is the first such result for fractional
reaction-diffusion equations on 
$\R^n$    with the potentially unbounded
multiplicative noise. The advance concerns the range of
uniformity and its consequences for invariant measures.

A major difficulty
 in obtaining 
 the uniform LDP
 of solutions of \eqref{intr1}-\eqref{intr2} 
for  all initial data 
 $u_0\in L^2(\R^n)$ is that conventional energy
estimates depend on 
$ \|u_0\|_{L^2(\R^n)}$. For multiplicative noise, the same dependence
enters estimates of the stochastic forcing through the solution-dependent
coefficient. An error  estimate 
depending on  the initial norm
cannot yield convergence uniformly over all 
$u_0\in  L^2(\R^n)$.
  Nor can bounded-domain
arguments based on finite spatial volume be transferred directly to
\(\mathbb R^n\). In particular, a uniform pointwise bound does not by itself
imply a uniform \(L^2(\mathbb R^n)\)-bound.

We overcome this difficulty by comparing controlled solutions with the same
control and different initial states. Let \(u_v(\cdot,u_0)\) denote the solution
of the skeleton equation obtained by replacing the stochastic forcing
in \eqref{intr1}
 with
\(\sigma(t,u_v)v\,dt\). A truncation argument with a time-dependent scalar
barrier, together with the monotonicity of the reaction term and the positivity
property of the fractional Dirichlet form, yields
\begin{equation}\label{intr3}
	\|u_v(t,u_0)-u_v(t,0)\|_{L^\infty(\mathbb R^n)}
	\le C_{T,N}\,t^{-1/(p-2)},
	\qquad 0<t\le T,
\end{equation}
uniformly over 
$u_0\in L^2(\R^n)$ and controls satisfying
$
\|v\|_{L^2(0,T;\ell^2)}\le N$,
where $C_{T,N}$ is a positive number
depending only on $T$ and $N$.

The comparison with the zero-initial-data solution is essential: the estimate
controls the effect of arbitrary initial data without requiring a global
\(L^\infty\)-bound for the externally forced reference solution.

The spatial localization of the noise then converts
\eqref{intr3} into a bound for its Hilbert--Schmidt norm along
controlled trajectories:
$$
	\sup_{u_0\in H}
	\sup_{\|v\|_{L^2(0,T;\ell^2)}\le N}
	\int_0^T
	\|\sigma(t,u_v(t,u_0))\|_{\mathcal L_2(\ell^2,H)}^2\,dt
	<\infty.
$$
Here the singularity near the initial time is bounded by \(t^{-2/(p-2)}\),
which is integrable precisely when \(p>4\). This explains the role of the
dissipativity exponent in our argument. Energy estimates for the difference
between controlled stochastic and deterministic solutions, followed by martingale
estimates, consequently give convergence in probability in \(\mathcal E_T\),
uniformly over all initial data and bounded-energy controls.

A separate compactness argument is required to show that the trajectory rate
functions are good. For each fixed initial state, spatial tail estimates are
combined with compactness on bounded spatial regions to obtain strong continuity
of the skeleton solution with respect to weakly convergent controls. This gives
compact action level sets in
$ \mathcal E_T$. 
It is not necessary to obtain
compactness of their union over all initial states, which would be incompatible
with unrestricted initial data at time zero. Combining this fixed-initial-state
compactness with the uniform stochastic convergence allows us to apply the
weak-convergence criterion for the Freidlin--Wentzell uniform LDP \cite{sal2}.
We do not identify this principle with a Dembo-Zeitouni uniform LDP on the
noncompact space 
$L^2(\R^n) $, since these notions are not equivalent in general.

Based on the uniform LDP of solutions, we then investigate
the LDP of invariant measures
of \eqref{intr1}-\eqref{intr2}.
The LDP of invariant measures of stochastic partial
 differential equations has been established 
   in \cite{brz1, cerr0, cerr2, klo1,mar1, mar2, sow2}
   when the spatial domains  are bounded.
   In the
    case of unbounded domains,
   the only work on the LDP of invariant measures
   was given in 
   \cite{wangJSP2024} 
   where 
    the fractional reaction-diffusion equation
   in $\R^n$
   with additive noise was  studied.
   In the present paper, we 
   will deal with
   the  LDP
   of invariant measures of  
   \eqref{intr1}-\eqref{intr2} 
   driven by unbounded multiplicative noise,
   which is  substantially different from
   the case of   additive  noise as treated in
   \cite{wangJSP2024}:
   
   (i).   The argument of \cite{wangJSP2024}
   is valid only for additive noise, and cannot be
   used to handle unbounded multiplicative noise.
   In particular, the weak exponential  tightness
   of solutions for additive noise  was employed  in 
    \cite{wangJSP2024} which is not available
    for unbounded multiplicative noise.

    (ii).  A  weighted Sobolev space  is   involved
    in establishing the exponential tightness of solutions
    in  \cite{wangJSP2024}
    which requires the noise coefficient also belongs
    to that wighted space. In the present  paper, 
    we remove the  restriction on weighted noise,
    and  do not use  the exponential tightness
    in order to avoid any weighted spaces
    in $\R^n$.
    
    (iii). The uniform Dembo-Zeitouni LDP    
    of solutions   plays a key role 
    in 
    \cite{wangJSP2024}
    for proving the LDP of invariant measures.
    In this paper, we use a new argument
    to deal with the multiplicative noise
    without invoking the      
   uniform Dembo-Zeitouni LDP    
    of solutions.

    When the linear damping
    parameter
    $\lambda$ in \eqref{intr1}
    is sufficiently large,
    we will show 
    the deterministic dynamics 
     has a unique globally attracting equilibrium
\(\bar u\), and the stochastic equation has a unique invariant measure
\(\mu^\varepsilon\) for sufficiently small \(\varepsilon\). 

Our  second  main result 
establishes
the LDP of  the family
$
\{\mu^\varepsilon\}$ 
of invariant measures
  in $L^2(\R^n)$   with good rate function:
$$
		J(z)=\lim_{\rho \to 0}\inf\biggl\{
		\frac12\int_0^T\|v(t)\|_{\ell^2}^2\,dt:
		T>0,\ v\in L^2(0,T;\ell^2),
		\ \ \|u_v(T,\bar u)-z\|_H<\rho
		\biggr\}.
$$
 To the best of our knowledge, this is
the first invariant-measure LDP for fractional stochastic reaction--diffusion
equations on \(\mathbb R^n\) driven by multiplicative noise. It extends the additive-noise setting of \cite{wangJSP2024}
through an argument that does not introduce weighted spaces.

Two ingredients distinguish the stationary proof. First, compactness of the
quasipotential sublevel sets follows from fixed-time controlled compactness and
contraction. Controlled trajectories starting from \(\bar u\) with bounded total
action remain bounded uniformly in their time horizon. Their endpoints over
arbitrarily long intervals can therefore be approximated uniformly by endpoints
obtained over one sufficiently long but fixed interval. Compactness of the latter
sets yields compactness of the quasipotential 
level sets.

Second, the stationary upper bound uses the trajectory LDP uniformly over all of
$L^2(\R^n)$. On each fixed time interval, this provides an exponentially small bound
for failure to follow a bounded-action controlled path, independently of the
state at the beginning of the interval. The Markov property permits this
estimate to be applied at every intermediate random state, without first
restricting that state to a bounded or compact set. Iterating the controlled
contraction over a number of intervals of order \(\varepsilon^{-1}\), and using
the stationary second-moment estimate, gives
\[
\mu^\varepsilon
\bigl\{z:\operatorname{dist}_
{L^2(\R^n)}
(z,J^s)\ge\delta\bigr\}
\le
N_\varepsilon e^{-(s-\eta)/\varepsilon}
+C_\delta\varepsilon\,\theta^{2N_\varepsilon},
\]
where 
$J^s=\{J\le s\}$, 
$0<\theta<1$, and
$ N_\varepsilon$ is of order $\varepsilon^{-1}\).
The second term is exponentially small, while the prefactor \(N_\varepsilon\)
has no effect on the large-deviation scale. Thus an ordinary second-moment
estimate, combined with contraction and uniformity over the entire phase space,
suffices for the stationary upper bound. No separate exponential-tightness
estimate is required in this proof, and the trajectory LDP is used only on a
fixed time interval.

The paper is organized as follows. Section 2 recalls the weak convergence
criterion for uniform large deviations. Section 3 specifies the coefficient
assumptions and recalls well-posedness of \eqref{intr1}-\eqref{intr2}. Section 4 develops the controlled
estimates and proves the trajectory LDP uniformly over all initial data.
Section 5 establishes compactness of the quasipotential 
level sets and proves
the LDP for invariant measures. Appendix A provides the truncation identity used
in deriving the initial-data-independent estimates.

 \section{Weak convergence theory 
 for uniform  LDP}
 \setcounter{equation}{0}
 
 We 
 recall  the weak convergence  theory
   on  the
  uniform LDP of  a family of random variables
  in  
  \cite{bud1, dup1, fre2,sal2}
  which will be used in later sections.

 Suppose 
   $\{W(t)\}_{t\in [0,T]} $
   is 
a cylindrical Wiener process 
in   $\ell^2$ on a stochastic basis
$(\Omega, \mathcal{F}, 
   \{ { \mathcal{F}} _t\} _{t\ge 0},  P )$,
   which implies that
   $W(t)$ takes values in 
   another separable Hilbert space
   $U$  and 
    the embedding
  $\ell^2 \hookrightarrow U$ is   Hilbert-Schmidt.
  Suppose 
  $(\caly,  d)$  
and
$(\caly_0, d_0)$
are complete separable metric spaces.
  Given 
  $\eps>0$ and $x\in \caly_0$,
  let 
   $\calg^\eps_x: C([0,T], U) \to \caly$ 
   be  a measurable map
   and 
 $
 X^\eps_x = \calg ^\eps_x (W)$.
  Let $\cala$
 be the space of  
 $ \ell^2$-valued  
 progressively measurable   processes
 $v$
 such that
 $\int_0^T \| v (t)\|_{\ell^2} ^2 dt<\infty$
 $P$-a.s. 
 and 
  $$
 \cala_N
 =\{v\in \cala: v  \in 
 \overline{B}_N(L^2(0,T; \ell^2)),
 \ P
  \text{-a.s.}
 \},
$$
   where 
   $\overline{B}_N(L^2(0,T; \ell^2))$
   is 
   the closed ball in 
   $L^2(0,T; \ell^2)$ 
   of radius $N$ and  center zero,
   which 
   is  equipped with 
     the weak topology.

   Given $x\in \caly_0$, let
  $\calg^0_x:    C([0,T], U) \to \caly$
  be a measurable map
  such that for each $N>0$ 
  and $\eta>0$,
 \be\label{convp}
 \lim_{\eps\to 0}
 \sup_{x\in \caly_0}
 \sup_{v\in 
 \cala_N}
 P
 \left (
 d
 \left (
 \calg_x^\eps \left (
 W+
 \eps^{-\frac 12}\int_0^\cdot
 v(s) ds
 \right ),
 \
 \calg_x^0
 \left (
 \int_0^\cdot v(s) ds
 \right )
 \right ) >\eta
  \right )
  =0.
 \ee
 We further  assume that
   for each  $N>0$
   and $x\in \caly_0$,
  \be\label{convp1}
  \left \{
  \calg^0_x  
  \left (\int_0^{\cdot} v(t) dt
  \right ):\
  v \in \overline{B}_N(L^2(0,T; \ell^2))
  \right \}
  \ \text{ 
  is a compact subset of }
  \  \caly.
  \ee
  Given  $x\in \caly_0$,
  let  $I_{T,x}: \caly \to [0, \infty]$ be a map defined   by,
  for  $\phi \in \caly$,
 \be\label{convp2}
  I_{T,x}(\phi)= \inf 
  \left \{
  {\frac 12} \int_0^T \| v(t)\|^2_{\ell^2} dt:\
   v\in L^2(0,T;  {\ell^2}),
   \  
  \calg^0_x \left (\int_0^\cdot v(t) dt\right ) =
  \phi
  \right \},
 \ee
 where 
  the infimum of the  empty set
   is    $+\infty$.
   
   By  \eqref{convp1}
   we find that
    $I_{T,x}$  
   is a good rate function  
       in the sense that 
     for each
 $s\in [0, \infty) $, the level set
 $I^s_{T,x}  =\{\phi \in \caly: I_{T,x}  (\phi) \le s\}$ is  compact.
 
 We need the following result
 on the uniform LDP of the family
 $\{X^\eps_x, \eps>0,
 x\in \caly_0 \}$.

   \begin{prop}\label{fwuldp}
   \cite[Theorem 2.13]{sal2}
   Under conditions 
    \eqref{convp} and \eqref{convp1},
      the  
    family $\{X^\eps_x, \eps>0,
 x\in \caly_0 \}$   satisfies 
 the  uniform Freidlin-Wentzell
 LDP in  $\caly$
 with  
 rate functions $I_{T,x}$  given
 by \eqref{convp2} 
 uniformly on $\caly_0$;
 more precisely,  
    $\{X^\eps_x, \eps>0,
 x\in \caly_0 \}$  satisfies
     the following two
 conditions:
 
 (i)  For every $s>0$  and $\eta>0$,
 $$
 \liminf_{\eps \to 0}
 \inf_{x\in \caly_0}
 \inf_{\phi
 \in I_{T,x} ^{s}}
 \left (
 \eps \ln P
 \left (
 d (X_x^\eps, \
 \phi  )
 < \eta
 \right )
 +I_{T,x} (\phi)
 \right )
 \ge 0.
 $$
 
 (ii)  For every $s_0>0$  and
 $\eta>0$,
 $$
 \limsup_{\eps \to 0}
 \sup_{x\in \caly_0}
 \sup_{0\le s\le s_0} 
 \left (
 \eps \ln P
 \left (
 {\rm dist} (X_x^\eps, \
 I_{T,x} ^s   )
\ge \eta
 \right ) +s  
 \right )
\le  0,
 $$
 \end{prop}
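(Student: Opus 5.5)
The plan is the standard weak-convergence (Budhiraja--Dupuis) argument, carried out so that every estimate is uniform in $x\in\caly_0$. The two tools are the Boué--Dupuis variational representation of exponential functionals of $W$ and the Girsanov theorem. Both parts rest on one observation: by \eqref{convp}, the controlled process $\calg^\eps_x(W+\eps^{-1/2}\int_0^\cdot v\,ds)$ is close in probability to $\calg^0_x(\int_0^\cdot v\,ds)$, and this closeness is uniform over $x$ and over $v\in\cala_N$.

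\textbf{Lower bound (i).} Fix $s,\eta$, $x\in\caly_0$ and $\phi\in I^s_{T,x}$. For $\gamma>0$, choose a deterministic $v$ with $\calg^0_x(\int_0^\cdot v)=\phi$ and $\frac12\int_0^T\|v\|^2\le I_{T,x}(\phi)+\gamma\le s+\gamma$. Then $v\in\cala_N$ with $N^2=2(s+1)$.

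Define the shifted measure $Q$ by $dQ/dP=\exp\bigl(-\eps^{-1/2}\int_0^T v\,dW-\frac1{2\eps}\int_0^T\|v\|^2\bigr)$. Under $Q$, the process $\tilde W=W+\eps^{-1/2}\int_0^\cdot v$ is a cylindrical Wiener process, so $P(d(X^\eps_x,\phi)<\eta)=Q(d(\calg^\eps_x(\tilde W),\phi)<\eta)$. Apply Jensen's inequality (or Cauchy--Schwarz) on the event $A=\{d(\calg^\eps_x(W+\eps^{-1/2}\int v),\phi)<\eta\}$:
$$
\eps\ln P(d(X^\eps_x,\phi)<\eta)\ge \eps\ln P(A)-\frac{1}{2}\int_0^T\|v\|^2-\frac{\sqrt\eps\,E[\1_A|\int v\,dW|]}{P(A)}.
$$
By \eqref{convp}, $P(A)\to1$ uniformly in $x$ and in $\phi\in I^s_{T,x}$. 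The stochastic integral has second moment $\int_0^T\|v\|^2\le 2(s+1)$, so the last term is $O(\sqrt\eps)$ uniformly. Letting $\eps\to0$ and then $\gamma\to0$ gives (i).

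\textbf{Upper bound (ii).} Argue by contradiction. Suppose there are $\eps_k\to0$, $x_k$, $s_k\le s_0$ and $\beta>0$ with
$$
\eps_k\ln P(\mathrm{dist}(X^{\eps_k}_{x_k},I^{s_k}_{T,x_k})\ge\eta)\ge -s_k+\beta.
$$
Write $F_k=\{\mathrm{dist}(\cdot,I^{s_k}_{T,x_k})\ge\eta\}$ and consider the functional $h_k=\infty\cdot\1_{F_k^c}$, truncated at level $M>s_0$, i.e. $h_k=M\1_{F_k^c}$. This gives
$$
-\eps\ln E e^{-h_k(X^\eps)/\eps}\le -\eps\ln P(F_k)\le s_k-\beta,
$$
while the left side is at least $\min(M,\cdot)$. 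The Boué--Dupuis formula represents the left side as $\inf_{v\in\cala}E[\frac12\int\|v\|^2+h_k(\calg^{\eps}_{x_k}(W+\eps^{-1/2}\int v))]$. Take a $\beta/2$-optimal $v_k$. A standard localization by stopping times lets us assume $v_k\in\cala_N$, with $N$ depending only on $M$, at the cost of a further $\beta/4$.

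Now use \eqref{convp} to replace the controlled process by $\calg^0_{x_k}(\int v_k)$; since the replacement is uniform in $x$, this costs $o(1)$ in probability. The point $\calg^0_{x_k}(\int v_k)$ lies in $I^{r}_{T,x_k}$ with $r=\frac12\int\|v_k\|^2$ (pathwise). So on the event where the controlled process is in $F_k$, we have $\frac12\int\|v_k\|^2>s_k$ up to a small error, pathwise. Hence
$$
E[\tfrac12\textstyle\int\|v_k\|^2+h_k]\ge \min(s_k,M)-o(1),
$$
which contradicts the bound $\le s_k-\beta/4$. The pathwise comparison uses that $d(\calg^\eps,\calg^0)<\eta/2$ together with $\mathrm{dist}(\calg^\eps,I^{s_k})\ge\eta$ forces $\mathrm{dist}(\calg^0,I^{s_k})\ge\eta/2$, so $\calg^0\notin I^{s_k}$ and thus $\frac12\int\|v_k\|^2>s_k$. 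Condition \eqref{convp1} enters only to ensure that the level sets are closed, so that $\mathrm{dist}$ and the inclusion $I^r_{T,x}$ behave properly. No compactness uniform in $x$ is needed.

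\textbf{Main obstacle.} The delicate step is the upper bound. The Boué--Dupuis infimum is over unbounded controls, whereas \eqref{convp} is only available on $\cala_N$. The reduction to bounded-energy controls must therefore be done uniformly in $x$ and $k$. I would handle it by stopping each control at $\tau_N=\inf\{t:\int_0^t\|v\|^2\ge N^2\}$ and choosing $N^2>2(M+1)$. A near-optimal control cannot have total cost exceeding $M+1$ on a set of large probability, because its expected cost is at most $M+1$. The truncation then changes the expectation by at most $M\cdot P(\tau_N<T)$ plus a cost term, both small uniformly in $k$.
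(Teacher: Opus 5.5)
\textbf{Verdict: your proof is correct.} The paper does not prove this proposition; it quotes it directly from Theorem~2.13 of Salins--Budhiraja--Dupuis, so there is no in-paper argument to compare against. Your proposal is a self-contained proof along the standard variational lines on which that theorem rests:
\begin{itemize}
\item for (i), Girsanov plus Jensen with a deterministic near-optimal control;
\item for (ii), the Bou\'e--Dupuis representation of $-\eps\ln E e^{-M\1_{F_k^c}(X)/\eps}$, combined with a contradiction sequence.
\end{itemize}
What your version makes explicit is that both bounds use \eqref{convp} only at an energy level $N$ fixed independently of $x$, $\phi$, $s$ and $k$. That is the sole source of uniformity over $\caly_0$. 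No compactness of $\bigcup_{x\in\caly_0}\{\calg^0_x(\int_0^\cdot v\,dt):v\in\overline{B}_N(L^2(0,T;\ell^2))\}$ is needed.

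\textbf{Points to tighten.}

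\emph{Truncation level.} The condition $N^2>2(M+1)$ does not make the truncation error small. Since $h_k\ge0$, the near-optimal control satisfies $E\frac12\int_0^T\|v_k\|^2\le s_0$, and truncation only lowers the cost. The $h_k$-term changes by at most $M\,P(\int_0^T\|v_k\|^2\ge N^2)\le 2Ms_0/N^2$. So take $N^2\ge 8Ms_0/\beta$. This is still independent of $k$, because $\beta$ is fixed by the contradiction hypothesis.

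\emph{The comparison is exact.} Let $Y_k$ be the controlled process, $Z_k=\calg^0_{x_k}(\int_0^\cdot v_k\,dt)$, and $G_k=\{d(Y_k,Z_k)\le\eta/2\}$. On $G_k$ there is no ``small error'':
\begin{itemize}
\item if $Y_k\in F_k^c$, then $h_k(Y_k)=M\ge s_k$;
\item if $Y_k\in F_k$, then $\mathrm{dist}(Z_k,I^{s_k}_{T,x_k})\ge\eta/2>0$, which forces $\frac12\int_0^T\|v_k\|^2>s_k$.
\end{itemize}
Hence the expectation is at least $s_k-s_0P(G_k^c)$.

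\emph{Role of \eqref{convp1}.} The last implication holds for any set, since a point of a set is at distance $0$ from it. So \eqref{convp1} plays no role in (i) or (ii), and you do not need closedness of the level sets. It is needed only to make each $I_{T,x}$ a good rate function, which is part of what the uniform Freidlin--Wentzell LDP asserts.
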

 where
 ${\rm dist}  (u, \
 E  )
 = \inf\limits_{\phi
 \in  E}
 d (u, \phi)$ for a subset $E$ of $\caly$.

   Proposition
   \ref{fwuldp}   will be applied to establish
   the uniform LDP
   of solutions of the stochastic equation
   \eqref{intr1}-\eqref{intr2}
   in 
   $C([0,T], L^2(\R^n))
   \bigcap L^2(0,T; H^\alpha (\R^n) )
   \bigcap L^p
   (0,T; L^p(\R^n))$
   over all initial data in $L^2(\R^n)$.

\section{Existence and uniqueness of solutions} 
\setcounter{equation}{0}

 In this section, we discuss assumptions
 on the nonlinear reaction and diffusion terms
 in \eqref{intr1}
 and recall the existence and uniqueness
 of solutions of   \eqref{intr1}-\eqref{intr2}.

Recall the    fractional Sobolev space
$H^ \alpha (\R^n)$  is    given by
  $$
 H^ \alpha (\R^n)
 =\left \{
 u\in L^2(\R^n): (-\Delta)^{\frac {\alpha}2}
 u \in L^2(\R^n) \right \},
  $$
  which is a
  Hilbert space with inner product
  $$
 (u, v)_{H^ \alpha (\R^n)}
 =
  \int_{\R^n} u(x)v(x)  dx 
  +
  \int_{\R^n}
  \left ( (-\Delta)^{{\frac  \alpha2}} u (x)
  \right )
  \left (
    (-\Delta)^{{\frac  \alpha2}} v (x)
    \right ) dx.
    $$ 
    The norm of 
   $H^ \alpha (\R^n)$
   is written as  
  $$
 \| u\|_{H^ \alpha (\R^n)}^2
 = \| u\|^2_{L^2(\R^n)}
 +   \| (-\Delta)^{{\frac  \alpha2}} u \|^2_{L^2(\R^n)}.
$$
It follows from 
    \cite{dine1}  that
   $$
 \| u\|^2_{H^ \alpha ({\R^n}) }
 = 
 \int_{\R^n} |u(x)|^2 dx 
  +
  {\frac {1}2}C(n,\alpha)
  \int_{\R^n} \int_{\R^n}
 {\frac {|u(x)- u(y)|^2} {|x-y|^{n+2 \alpha} }} dxdy , 
$$
and   
 $$
 (u, v)_{H^ \alpha (\R^n)}
 =
  \int_{\R^n} u(x)v(x)  dx 
  +  {\frac {1}2}C(n,\alpha)
  \int_{\R^n} \int_{\R^n}
 {\frac {(u(x)- u(y)) (v(x) -v(y))} {|x-y|^{n+2 \alpha} }} dxdy, 
 $$
for
  all $u, v \in H^ \alpha  ({\R^n})$
  with
    $C(n,\alpha) 
 =\frac{\alpha 4^\alpha\Gamma(\frac{n+2\alpha}{2})}
{\pi^\frac{n}{2}\Gamma(1-\alpha)}
$.

   In the sequel,
   we write
$H= 
  L^2(\R^n) $ 
  and
  $V= 
H^ \alpha (\R^n) $.
We use    
  $\| \cdot \|$  for the norm of $H$
  and  $(\cdot, \cdot)$ for the inner product of $H$.
Then
$V \hookrightarrow H  = H^* \hookrightarrow V^*$
where $H^*$   and $V^*$ are the dual spaces
of $H$  and $V$, respectively.
We use
$\call_2(\ell^2,H)$ for 
the  space of Hilbert-Schmidt
   operators 
   from $\ell^2$ to $H$
   with  norm
   $\| \cdot \|_{\call_2(\ell^2,H)}$.

     For the nonlinear term
      $F$ in \eqref{intr1},
we  assume  
$F: \R \times \o \times \R$
$\to \R$ is      continuous     
 such that  for all
$t, u, u_1, u_2  \in \R$   and  $x \in \o$, 
\begin{align}&
\label{f1}
F (t, x, u) u
\ge \lambda_1 |u|^p -\psi_1(t,x),
  \\
 &
\label{f2}
|F(t, x, u) |   \le
 \ \psi_2 (t,x)
  |u|^{p-1}  + \psi_3 (t,x),
  \\
 &
(F(t,x, u_1)
-F(t,x, u_2))(u_1-u_2) 
\label{f3}
\ge
\lambda_2 |u_1 -u_2|^p 
- \psi_4(t,x )|u_1-u_2|^2,  
\end{align}
where  $\lambda_1>0$,
  $\lambda_2>0$ and
   $ p>  4$ are constants, 
$\psi_1 \in L^1_{loc} (\R, L^1(\R^n))$,
$\psi_2, \psi_4  \in L_{loc}^\infty 
(\R, L^\infty(\R^n))$  and
$\psi_3\in 
  L_{loc}^{\frac p{p-1}}(\R, L^
  {\frac p{p-1}}
  (\R^n))$.

Let
        $\sigma:
      \R \times \R^n \times \R
      \to \ell^2$  given  by   
  \be\label{sig1}
     \sigma (t,x, u)
     = \sigma_1  (t,x)
     +   
       \kappa (x)   \sigma_{2} ( t,x, u)  
        , 
       \quad \forall \ t\in \R, \ x\in \R^n, \ u\in \R,
\ee
     where     
   $\kappa \in    L^2(\R^n) \bigcap L^\infty(\R^n)$,
   $\sigma_1\in L^\infty_{loc}
   (\R,  L^2(\R^n, \ell^2 ))$
   and 
    $ 
     \sigma_2 (t,x, u)
     =\left \{ 
        \sigma_{2,k} ( t,x, u)  \right \}_{k=1}^\infty
      $  such that
    $\sigma_{2,k}: \R \times \R^n \times \R
    \to \R$ is measurable
    and    for all 
   $t\in \R$,
   $x\in \R^n$  and $u_1, u_2 \in \R$,
    \be\label{sig2}
    \sigma_{2,k} (t,x, 0)=0
    \quad \text{and}
    \quad 
   |\sigma_{2,k} (t,x, u_1) -\sigma_{2,k} (t,x,u_2) |^2
   \le \alpha_k 
   |u_1  -u_2 |^2,
  \ee
   where  $\{\alpha_k\}_{k\in
   \mathbb{N}}$ is a 
  sequence of
   positive numbers 
   with
  $
  \sum_{k=1}^\infty  \alpha_k <\infty 
  $.

  Given $u\in H$  and $t\in \R$,  denote by 
  \be\label{sig23}
  \sigma (t,u) (v) (x)
  =\sum_{k=1}^\infty
  \left (
   \sigma_{1,k}  (t,x)
     +   
       \kappa (x)   \sigma_{2,k} ( t,x, u(x) )
       \right )
         v_k,
  \quad  
   v=\{v_k\}_{k=1}^\infty \in \ell^2,
   \ \ x\in \R^n.
\ee
 By \eqref{sig1}-\eqref{sig23} 
  we find  that 
  $\sigma (t, u): \ell^2\to H$ is a   Hilbert-Schmidt 
  operator such that 
      for $t\in \R$ and
    $u, u_1, u_2\in H$,
       \be\label{sig3}
    \|\sigma(t,  u_1)-
\sigma(t, u_2)\|^2_{\call_2 (\ell^2, H)}
\le
L_\sigma
\| u_1-u_2  \|^2 ,
    \ee
    and 
     \be\label{sig4}
    \|\sigma(t,  u)\|^2_{\call_2 (\ell^2, H)}
     \le
  2
  \|
  \sigma_1 (t)
  \|_{L^2(\R^n, \ell^2)}^2
  + 2
L_\sigma
   \| u \|^2 ,
    \ee
     where $L_\sigma
    =  
     \|\kappa \|_{L^\infty
  (\R^n)}^2\sum_{k=1}^\infty
  \alpha_k  $.

  Then  system \eqref{intr1}-\eqref{intr2}
  can be rewritten as 
  \be
  \label{sys1}
  du^{\eps} (t)
  + (-\Delta)^ \alpha  u^{\eps} (t)  dt
  +\lambda  u^{\eps} (t)  dt
    + F(t,x, u^{\eps}(t)) dt
  $$
  $$
  =  
    g(t,x)   dt
  +\sqrt{\eps} 
  \sum_{k=1}^\infty
  \left (
  \sigma_{1,k}  (t,x)
     +   
       \kappa (x)   \sigma_{2,k} ( t,x, u^{\eps}(t) ) 
       \right ) dW_k
   ,
  \ee 
  with   initial condition 
 \be\label{sys2}
 u^{\eps}( 0, x ) = u_0 (x),   \quad x\in \R^n,
 \ee 
  where $\{W_k\}_{k=1}^\infty$
  is a sequence of independent  real-valued
  standard 
  Wiener processes.

 The following  existence and uniqueness of solutions of
 \eqref{intr1}-\eqref{intr2}
 can be found in
 \cite{wangJDE2019}.

\begin{lem} \label{le31}
 Suppose 
 \eqref{f1}--\eqref{sig2}
  hold. Then   for every
    $T>0$,
  $\eps\in (0,1)$
  and  $u_0\in L^2(\Omega,\calf_0; H)$,
  system \eqref{intr1}-\eqref{intr2}
  has a 
   unique solution
  $u^\eps$  in the sense that
  $u^{\eps}$ is 
   a continuous $H$-valued $\calf_t$-adapted
   process such that 
$$
 u^{\eps}\in L^2(\Omega, C([0, T], H))
 \bigcap L^2(\Omega, L^2(0, T; V))
 \bigcap L^p(\Omega, L^p(0, T; L^p(\R^n))),
 $$
 and   for all  $t\in [0,T]$,  
$$
   u^{\eps}(t) 
 + \int_0^t
   (-\Delta )^{  \alpha  } u^{\eps}(s)  ds
 +
 \lambda \int_0^t u^{\eps}(s)  ds
 +
  \int_0^t  F(s, \cdot , u^{\eps}(s))    ds
 =u_0
   +\int_0^t  g(s)  ds
  + \sqrt{\eps} \int_0^t
  \sigma (s, u^{\eps}(s)) dW
$$
 in $ (V\bigcap L^p(\R^n) )^*$, $P$-almost surely. 
 Furthermore,  we have
 $$
\| u^{\eps}  \|_{L^2(\Omega, C([0, T], H) ) }^2
+
\| u ^{\eps}  \|_{L^2 (\Omega, L^2(0, T;  V ) )}^2
+
\| u ^{\eps} \| _{L^p(\Omega, L^p(0, T;  L^p(\R^n) ) )}^p
\le M_1
\left (1+ 
\| u_0 \|^2_{L^2(\Omega, H)} 
\right ),
$$
where $M_1>0$ is a constant
depending only on $T$.
 \end{lem}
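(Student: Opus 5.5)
The plan is the standard variational (Galerkin--monotonicity) argument for monotone SPDEs in the Gelfand triple $V\cap L^p(\R^n)\hookrightarrow H\hookrightarrow (V\cap L^p(\R^n))^*$. The paper cites this result from \cite{wangJDE2019}, so here I only outline how it is obtained.

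\emph{Step 1: Galerkin approximations on growing balls.} Fix $T>0$. On the ball $\{|x|<k\}$ we take a finite-dimensional Galerkin approximation of \eqref{sys1}--\eqref{sys2}. The Dirichlet fractional Laplacian on a bounded domain has compact resolvent, so its eigenfunctions are available for this. We also use a truncated noise $\sum_{j\le m}$. The resulting finite-dimensional SDE has locally Lipschitz coefficients, and \eqref{f1} makes it coercive. It therefore has a unique global solution $u_m$.

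\emph{Step 2: uniform energy estimates.} Apply It\^o's formula to $\|u_m\|^2$. The reaction term is handled by \eqref{f1}:
$$
2(F(t,x,u),u)\ge 2\lambda_1\|u\|_{L^p}^p-2\|\psi_1(t)\|_{L^1}.
$$
The It\^o correction is bounded by $\eps\|\sigma(t,u)\|^2_{\call_2(\ell^2,H)}$, which is controlled by \eqref{sig4}. The stochastic integral is estimated with the Burkholder--Davis--Gundy inequality, and we absorb $\frac12 E\sup_t\|u_m\|^2$ into the left-hand side. Gronwall's inequality then gives the bound in the statement of Lemma \ref{le31}, uniformly in $m$, with $M_1$ depending only on $T$ and on the local norms of $\psi_1,g,\sigma_1$. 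By \eqref{f2}, $F(\cdot,u_m)$ is bounded in $L^{p/(p-1)}(\Omega\times(0,T)\times\R^n)$. By \eqref{sig4}, $\sigma(\cdot,u_m)$ is bounded in $L^2(\Omega\times(0,T);\call_2(\ell^2,H))$.

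\emph{Step 3: passage to the limit by monotonicity.} Extract weak and weak-$*$ limits $u$, $\chi$ and $Z$ of $u_m$, $F(\cdot,u_m)$ and $\sigma(\cdot,u_m)$. The limit $u$ satisfies the equation with $\chi$ in place of $F(\cdot,u)$ and $Z$ in place of $\sigma(\cdot,u)$. The identifications $\chi=F(\cdot,u)$ and $Z=\sigma(\cdot,u)$ are the main obstacle. The operator $A(u)=(-\Delta)^\alpha u+\lambda u+F(t,x,u)$ is only locally (one-sided) monotone up to the term $\psi_4|u_1-u_2|^2$ from \eqref{f3}, and the noise is Lipschitz by \eqref{sig3}. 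We therefore use the weighted Minty trick with the weight $e^{-ct}$, where $c=2\|\psi_4\|_\infty+L_\sigma$. Combining It\^o's formula for $e^{-ct}\|u_m\|^2$ with lower semicontinuity of norms gives
$$
E\int_0^T e^{-ct}\bigl(2\langle A(u)-\chi,\,u-w\rangle+\|Z-\sigma(u)\|^2\bigr)\,dt\le0
$$
for all test processes $w$, and hence the identifications. Hemicontinuity of $A$ holds by continuity of $F$ and \eqref{f2}. The $\R^n$ setting causes no difficulty here, because monotonicity requires no compactness. The continuity $u\in C([0,T],H)$ then follows from the It\^o formula for $\|u\|^2$ in the triple.

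\emph{Step 4: uniqueness.} For two solutions, apply It\^o's formula to $\|u_1-u_2\|^2$. Use \eqref{f3} (drop the $\lambda_2|\cdot|^p$ term), positivity of the fractional Dirichlet form, and \eqref{sig3}. A stopping-time and Gronwall argument then gives $u_1=u_2$ a.s. The a priori bound passes to the limit by weak lower semicontinuity, which yields the estimate with constant $M_1$.
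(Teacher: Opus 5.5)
The paper gives no proof of this lemma beyond the citation to \cite{wangJDE2019}, and your outline is the standard monotone variational argument (Galerkin approximation, It\^o/BDG energy estimates, weighted Minty trick, uniqueness from \eqref{f3} and \eqref{sig3}) by which such well-posedness results are proved, so it is sound in substance. Two details need correcting: first, the Galerkin SDE is not locally Lipschitz, since $F$ is only continuous and one-sided Lipschitz by \eqref{f3}, so you need Krylov's existence theorem for finite-dimensional SDEs with continuous, weakly monotone, coercive coefficients; second, writing $\mathcal{D}(u)=g-(-\Delta)^\alpha u-\lambda u-F(\cdot,u)$ with $Y$ the weak limit of $\mathcal{D}(u_m)$, your Minty inequality should read $\E\int_0^T e^{-ct}\bigl(2\langle Y-\mathcal{D}(w),u-w\rangle-c\|u-w\|^2+\eps\|Z-\sigma(w)\|^2_{\call_2(\ell^2,H)}\bigr)\,dt\le 0$, from which $w=u$ gives $Z=\sigma(u)$ and $w=u-s\phi v$ with $s\downarrow 0$ gives $Y=\mathcal{D}(u)$ by hemicontinuity (the balls and Dirichlet eigenfunctions are also unnecessary, since any orthonormal basis of $H$ contained in $V\cap L^p(\R^n)$ with dense span will do).
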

  
     It follows from    Lemma \ref{le31}
     and the Yamada-Watanabe theorem 
        that for 
       $\eps\in (0,1)$ and
      $u_0\in H$, there  is 
      a 
       Borel-measurable map
     $\calg^\eps_{u_0}: C([0,T], U) \to  C([0, T], H )\bigcap  L^2(0, T;  V  )
     \bigcap  L^p(0, T;  L^p(\R^n)  )$
     such that the solution $u^\eps
     (\cdot, u_0) $ of
     \eqref{intr1}-\eqref{intr2} 
     is given by:
     $$
     u^\eps(\cdot, u_0) =\calg^\eps_{u_0} (W),
     \quad  \text{P-almost surely}.
     $$
     We 
     will
     establish  the   LDP of
     the family
     $\{\calg^\eps_{u_0}\}$ 
     in 
     $C([0, T], H )\bigcap  L^2(0, T;  V  )
     \bigcap  L^p(0, T;  L^p(\R^n)  )$
     uniformly for all $u_0\in H$.

   \section{Uniform LDP   of
   solution trajectories} 
\setcounter{equation}{0}

In this section, we prove our first main result
on the uniform 
Freidlin-Wentzell
 LDP 
 of solutions to  \eqref{intr1}-\eqref{intr2}
 over 
 all
  initial data in $H$
 which is  stated  below.

 \begin{thm}\label{main}
 	If \eqref{f1}-\eqref{sig2}
 	  hold true, then the solutions
 	of   
       \eqref{intr1}-\eqref{intr2}
       satisfy the
       Freidlin-Wentzell        LDP in
       $C([0, T], H )\bigcap  L^2(0, T;  V  )
       \bigcap  L^p(0, T;  L^p(\R^n)  )$,   as $\eps \to 0$, 
       uniformly with respect to all
       initial data in $H$
       with   good rate function
 given by \eqref{rate}.
\end{thm}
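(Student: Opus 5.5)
The plan is to apply Proposition \ref{fwuldp} with $\caly_0=H$ and $\caly=\mathcal E_T=C([0,T],H)\cap L^2(0,T;V)\cap L^p(0,T;L^p(\R^n))$. Here $\calg^0_{u_0}(\int_0^\cdot v\,ds)=u_v(\cdot,u_0)$ is the solution of the skeleton equation
\[
\frac{du_v}{dt}+(-\Delta)^\alpha u_v+\lambda u_v+F(t,x,u_v)=g+\sigma(t,u_v)v ,\qquad u_v(0)=u_0 .
\]
Two conditions have to be checked: the compactness \eqref{convp1} for each fixed $u_0$, and the uniform convergence in probability \eqref{convp} over all $u_0\in H$ and $v\in\cala_N$. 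Once both hold, the rate function \eqref{rate} is exactly $I_{T,u_0}$ from \eqref{convp2}, and the theorem follows.

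\textbf{Step 1: skeleton equation and compactness for fixed $u_0$.} Well-posedness of the skeleton equation follows from the same monotone-operator argument as Lemma \ref{le31}. Energy estimates, using \eqref{f1}, \eqref{sig4} and Gronwall, bound $u_v$ in $\mathcal E_T$ by $C(1+\|u_0\|^2)$ uniformly for $\|v\|_{L^2(0,T;\ell^2)}\le N$.

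For compactness, take $v_m\rightharpoonup v$ weakly in $L^2(0,T;\ell^2)$; I will show $u_{v_m}\to u_v$ strongly in $\mathcal E_T$.
\begin{itemize}
\item A uniform tail estimate, obtained by testing with $\xi(|x|^2/k^2)u$ for a cutoff $\xi$, gives $\int_{|x|>k}|u_{v_m}|^2\to0$ uniformly in $m$ and $t$.
\item On balls, $H^\alpha$ embeds compactly into $L^2$. Combined with time-derivative bounds in $(V\cap L^p)^*$, Aubin--Lions then gives strong $L^2(0,T;L^2(B_k))$ convergence.
\item Writing the equation for $w_m=u_{v_m}-u_v$ and using \eqref{f3}, the only dangerous term is $(\sigma(t,u_v)(v_m-v),w_m)$. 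It tends to zero because $\sigma(\cdot,u_v)^*w_m\to0$ strongly while $v_m-v\rightharpoonup0$, which follows from the strong local convergence together with the tails.
\end{itemize}
This yields convergence in $C([0,T],H)$, and also in $L^2(V)$ and $L^p(L^p)$ thanks to the $\lambda_2|w|^p$ term and the Dirichlet form term.

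\textbf{Step 2: the initial-data-independent $L^\infty$ bound \eqref{intr3}.} This is the main obstacle. Set $w=u_v(t,u_0)-u_v(t,0)$ and compare it with the scalar barrier
\[
h(t)=c\,t^{-1/(p-2)}+C_{T,N},
\]
which solves $h'=-\lambda_2 h^{p-1}/2+\dots$ and blows up at $t=0$, so that it dominates any initial datum. Test the $w$-equation with $(w-h(t))^+$, using the truncation identity of Appendix A. Three ingredients control the terms:
\begin{itemize}
\item The fractional Dirichlet form is nonnegative on $(w,(w-h)^+)$ because $h$ is constant in space.
\item Monotonicity \eqref{f3} supplies $\lambda_2|w|^{p-2}w$-type dissipation, which is superlinear and beats $\psi_4$.
\item The control term is $\kappa(\sigma_2(u_1)-\sigma_2(u_2))v$, with $\|\kappa\|_\infty$ and Lipschitz constant $\sum\alpha_k$. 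I will absorb it by letting the barrier grow through a factor $\exp(C\int_0^t\|v\|_{\ell^2}ds)$, which is bounded by $e^{C\sqrt{T}N}$.
\end{itemize}
The conclusion is $\|(w-h)^+\|=0$, and the same argument applies to $-w$.

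Then, since $\kappa\in L^2$,
\[
\|\kappa(\sigma_2(u_v(u_0))-\sigma_2(u_v(0)))\|_{\call_2}^2\le L\|\kappa\|^2\|w\|_\infty^2\le Ct^{-2/(p-2)} .
\]
This is integrable on $(0,T)$ because $p>4$. Together with the $u_0$-independent bounds on $u_v(\cdot,0)$, it gives $\sup_{u_0,v}\int_0^T\|\sigma(t,u_v(t,u_0))\|_{\call_2}^2dt<\infty$.

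\textbf{Step 3: uniform convergence \eqref{convp}.} By Girsanov, $u^\eps_v=\calg^\eps_{u_0}(W+\eps^{-1/2}\int v)$ solves the controlled SPDE
\[
du^\eps_v+\big((-\Delta)^\alpha u^\eps_v+\lambda u^\eps_v+F(u^\eps_v)\big)dt=\big(g+\sigma(u^\eps_v)v\big)dt+\sqrt\eps\,\sigma(u^\eps_v)dW .
\]
Let $z=u^\eps_v-u_v$ and apply Itô's formula to $\|z\|^2$. Monotonicity \eqref{f3} and \eqref{sig3} control the drift, through terms such as $\|z\|\,\|z\|\,\|v\|$, and the problem is handled with a stopping-time Gronwall argument. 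The martingale and Itô correction terms are
\[
\sqrt\eps\int(\sigma(u^\eps_v)dW,z),\qquad \eps\int\|\sigma(u^\eps_v)\|_{\call_2}^2 .
\]
The key point is to split
\[
\|\sigma(u^\eps_v)\|^2_{\call_2}\le 2L_\sigma\|z\|^2+2\|\sigma(u_v)\|^2_{\call_2},
\]
so that the second piece is uniformly integrable by Step 2 and the first is absorbed by Gronwall. BDG then gives
\[
E\sup_{t\le\tau}\|z\|^2+E\int_0^\tau\big(\|z\|_V^2+\|z\|_p^p\big)\le C\eps ,
\]
uniformly in $u_0\in H$ and $v\in\cala_N$, where $\tau$ is the stopping time defined by $\int_0^t\|v\|^2\le N^2$ (which is automatic). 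Chebyshev's inequality yields \eqref{convp}, and Proposition \ref{fwuldp} concludes the proof.
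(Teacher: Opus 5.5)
Your proposal is correct and follows the paper's proof essentially step for step. The shared steps are: Proposition \ref{fwuldp} with $\caly_0=H$; weak-to-strong continuity of the skeleton map for each fixed $u_0$ (the paper imports this as Lemma \ref{wc}, while you sketch the tail-estimate and local-compactness argument); the truncation-barrier $L^\infty$ bound on $u_v(t,u_0)-u_v(t,0)$, which gives the integrable $t^{-2/(p-2)}$ Hilbert--Schmidt bound for $p>4$; and the It\^o/BDG/Chebyshev estimate built on the splitting $\|\sigma(u^\eps_v)\|^2_{\call_2}\le 2L_\sigma\|z\|^2+2\|\sigma(u_v)\|^2_{\call_2}$.

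The one technical variation is your barrier, which is infinite at $t=0$. Lemma \ref{energy} requires $r\in W^{1,1}(0,T)$, so you must run the comparison on $[t_0,T]$ and then let $t_0\to0$, using $\|(w(t_0)-h(t_0))_+\|\le\|w(t_0)-u_0\|+\|(u_0-h(t_0))_+\|\to0$. The paper instead uses the finite barrier \eqref{cos2 p5} with $r_m(0)=m$ for $u_0\in H\cap L^\infty(\R^n)$, and reaches general $u_0\in H$ by truncation and the Lipschitz estimate \eqref{cos1 2}.
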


      As a first step to
       prove 
 Theorem \ref{main}, we
 need to consider 
 the  skeleton  equation
 with control     $v\in L^2(0,T; \ell^2)$:
   \be\label{contr1}
{\frac {d u_v (t)}{dt}}
+  (-\Delta)^\alpha u_v  (t)  
+  \lambda  u_v  (t)  +F(t, \cdot, u_v (t) )=  
  g (t)  
 +   \sigma(t, u_v (t))  
    v (t),
\ee
 with  initial data
 \be\label{contr2}
 u_v  (0)=u_0 \in H.
 \ee
 It follows from \cite{wangSD2026}
   that
under conditions     \eqref{f1}-\eqref{sig2},
   for 
   every 
  $v\in L^2(0,T; \ell^2)$ and $u_0\in H$,
\eqref{contr1}-\eqref{contr2}
 has a unique solution
 $u_v(\cdot, u_0)$ in 
  $ C([0,T], H)\bigcap L^2(0,T;V)
 \bigcap L^p(0,T; L^p(\R^n))$,
  such that    for all $t\in [0,T]$ 
  $$
   u_v (t)
   +\int_0^t
   (-\Delta)^\alpha u_v(s) ds
   +
   \lambda \int_0^t
     u_v(s) ds 
      +\int_0^t
   F(s,\cdot, u_v)ds
   =u_0 +\int_0^t 
   g(s) ds +\int_0^t
   \sigma(s, u_v(s)) v(s) ds,
$$
   in $\left (
   V\bigcap L^p(\R^n) \right )^*$.

   Next, we derive the uniform estimates
   of solutions of 
   \eqref{contr1}-\eqref{contr2}
  in   initial data in $H$.

 \begin{lem}\label{cos1}
 Suppose    \eqref{f1}-\eqref{sig2} are fulfilled,
  $T>0$  and 
  $N>0$.
  
  (i) There
   exists $M_2=M_2(T,  N)>0$ such that
  for all $v\in \overline{B}_N (L^2(0,T; \ell^2))$
  and $u_0\in H$,
  the solution
  $u_v$ of \eqref{contr1}-\eqref{contr2}
  satisfies  
\be\label{cos1 1}
   \sup_{t\in [0,T]}
   \| u_v (t, u_0  ) \|^2 
   +   \int_0^T \|  u_v  (s, u_0) \|^2 _V ds
    + 
   \int_0^T  \| u_v (s, u_0)  \|^p_{L^p(\R^n)}  ds
     \le M_2 (1+ \|u_0\|^2).
     \ee

     (ii)  There
   exists $M_3=M_3(T,  N)>0$ such that
  for all $v\in \overline{B}_N (L^2(0,T; \ell^2))$ and
  $u_{0,1}, u_{0,2} 
  \in H$,
  the solutions
  $u_v (\cdot, u_{0,1})$ 
  and $u_v(\cdot, u_{0,2}) $
  of \eqref{contr1}-\eqref{contr2}
  satisfy 
$$
   \sup_{t\in [0,T]}
   \| u_v (t, u_{0,1}   )
   - u_v(t, u_{0,2})
    \|^2 
   +   \int_0^T \|  
   u_v (s, u_{0,1}   )
   - u_v(s, u_{0,2})    \|^2 _V ds
    + 
   \int_0^T  \| 
   u_v (s, u_{0,1}   )
   - u_v(s, u_{0,2})     \|^p_{L^p(\R^n)}  ds
   $$
   \be\label{cos1 2}
     \le M_3 \| u_{0,1}-u_{0,2} \|^2 .
     \ee
     \end{lem}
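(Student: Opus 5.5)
Both parts follow from standard energy estimates for \eqref{contr1}, combined with Gronwall's inequality in which the time-dependent coefficient $\|v(t)\|_{\ell^2}$ is integrated in the exponent. This integral is controlled by $\sqrt{T}\,N$ through $\|v\|_{L^2(0,T;\ell^2)}\le N$.

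For (i), I take the $H$-inner product of \eqref{contr1} with $u_v$. Since $u_v\in L^2(0,T;V)\cap L^p(0,T;L^p(\R^n))$ and $\frac{du_v}{dt}$ lies in the dual of $V\cap L^p(\R^n)$, the Lions--Magenes type energy equality applies. This gives
$$\frac12\frac{d}{dt}\|u_v\|^2+\|(-\Delta)^{\alpha/2}u_v\|^2+\lambda\|u_v\|^2+\int_{\R^n}F(t,x,u_v)u_v\,dx=(g,u_v)+(\sigma(t,u_v)v,u_v).$$
I bound $\int F u_v$ from below by $\lambda_1\|u_v\|_{L^p}^p-\|\psi_1(t)\|_{L^1}$ using \eqref{f1}, and $(g,u_v)$ by $\frac12\|g\|^2+\frac12\|u_v\|^2$. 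For the control term I use $\|\sigma(t,u)v\|\le\|\sigma(t,u)\|_{\mathcal L_2(\ell^2,H)}\|v\|_{\ell^2}$ together with \eqref{sig4}:
$$|(\sigma(t,u_v)v,u_v)|\le \|v\|_{\ell^2}\bigl(\sqrt2\|\sigma_1(t)\|_{L^2(\R^n,\ell^2)}+\sqrt{2L_\sigma}\|u_v\|\bigr)\|u_v\|\le C\|v\|_{\ell^2}^2+C(1+\|v\|_{\ell^2})\|u_v\|^2.$$
Here $\sigma_1$ is bounded on $[0,T]$ in $L^2(\R^n,\ell^2)$. Adding $\|u_v\|^2$ to both sides to complete the $V$-norm, I get
$$\frac{d}{dt}\|u_v\|^2+\|u_v\|_V^2+2\lambda_1\|u_v\|_{L^p}^p\le C(1+\|v(t)\|_{\ell^2})\|u_v\|^2+C\bigl(\|v(t)\|_{\ell^2}^2+\|g(t)\|^2+\|\psi_1(t)\|_{L^1}\bigr).$$
Gronwall then yields $\sup_t\|u_v(t)\|^2\le e^{C(T+\sqrt T N)}\bigl(\|u_0\|^2+C(N^2+\|g\|^2_{L^2(0,T;H)}+\|\psi_1\|_{L^1(0,T;L^1)})\bigr)$. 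Integrating the differential inequality over $[0,T]$ then gives the $V$ and $L^p$ bounds, which is \eqref{cos1 1}.

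For (ii), I set $w=u_v(\cdot,u_{0,1})-u_v(\cdot,u_{0,2})$ and subtract the two equations. Taking the inner product with $w$ and using \eqref{f3}, I get
$$\frac12\frac{d}{dt}\|w\|^2+\|(-\Delta)^{\alpha/2}w\|^2+\lambda\|w\|^2+\lambda_2\|w\|_{L^p}^p\le\|\psi_4\|_{L^\infty}\|w\|^2+\bigl((\sigma(t,u_1)-\sigma(t,u_2))v,w\bigr).$$
By \eqref{sig3} the last term is at most $\sqrt{L_\sigma}\|v\|_{\ell^2}\|w\|^2$, so the same Gronwall argument gives \eqref{cos1 2} with $M_3$ depending only on $T$ and $N$.

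The only delicate points are checking that the energy equality is legitimate, i.e.\ that $F(\cdot,u_v)\in L^{p/(p-1)}$ by \eqref{f2} and (i), and ensuring that no constant depends on $u_0$. The latter holds because $\|u_0\|^2$ enters only linearly, through the initial value in Gronwall's inequality.
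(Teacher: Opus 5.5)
Your proposal is correct and follows the paper's proof. For (i) it uses the energy identity with \eqref{f1} and \eqref{sig4}, for (ii) the difference equation with \eqref{f3} and \eqref{sig3}, and in both cases Gronwall's inequality with the control norm in the exponent; the one difference is that the paper absorbs the control term as $\|\sigma(t,u_v)\|^2_{\mathcal{L}_2(\ell^2,H)}+\|v(t)\|^2_{\ell^2}\|u_v(t)\|^2$, giving the constant $e^{(1+2L_\sigma)T+N^2}$ instead of your $e^{C(T+\sqrt{T}N)}$. Also, to avoid circularity, justify $F(\cdot,u_v)\in L^{p/(p-1)}$ by the membership $u_v\in L^p(0,T;L^p(\mathbb{R}^n))$ that comes with well-posedness, not by estimate (i) itself.
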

   
   \begin{proof}
   (i). 
    By 
       \eqref{contr1}   we have
    $$
   {\frac 12} {\frac d{dt}}
   \| u_v (t) \|^2
   +\| (-\Delta)^{\frac {\alpha}2} u_v (t)\|^2
   +
   \lambda \|  u_v (t)\|^2    +\int_{\R^n} F(t,x, u_v(t))  u_v (t)  dx 
   $$
   \be\label{cos1 p1}
   =
     (g(t), u_v(t))
   +(\sigma (t, u_v(t)) v(t), u_v(t)).
  \ee
  By \eqref{f1}, \eqref{sig4}  and  \eqref{cos1 p1}
  we get
  $$
    {\frac d{dt}}
   \| u_v (t) \|^2
   + 2 \| (-\Delta)^{\frac {\alpha}2} u_v(t)\|^2
   + 2
   \lambda \|  u_v (t)\|^2   
       + 2\lambda_1
    \| u_v (t) \|^p_{L^p(\R^n)} 
   $$
   $$
   \le  \| u_v (t) \|^2 + \| g(t) \|^2
   + 2 \|\psi_1 (t) \|_{L^1(\R^n)}
      +
   \| \sigma (t, u_v (t))\|^2_{\call_2(l^2, H)} 
  + 
  \| v(t)\|^2_{l^2}\| u_v(t) \|^2
$$
\be
 \label{cos1 p2}
 \le  (1+2L_\sigma +  \| v (t)\|^2_{\ell^2}
 )  \| u_v (t) \|^2 + \| g(t) \|^2
   + 2 \|\psi_1 (t) \|_{L^1(\R^n)}
      +
   2\|\sigma_1(t) \|^2_{L^2(\R^n, \ell^2)}.
 \ee
 By \eqref{cos1 p2}
 and Gronwall's inequality  we  find  that
  for all $t\in [0, T]$
   and $\|v\|_{L^2
  (0,T; \ell^2)} \le N$,
\begin{align}&
   \| u_v (t) \|^2 
   + 2  \int_0^t \| (-\Delta)^{\frac {\alpha}2} u_v (s)\|^2 ds
    + 2  \lambda_1
   \int_0^t  \| u_v (s) \|^p_{L^p(\R^n)}  ds
 \nonumber\\
 &
     \le 
     e^{(1+2L_\sigma) T  +  N^2
        }\|u_{0}\|^2 
     +
      e^{(1+2L_\sigma) T  +  N^2
        }
          \int_0^T
        ( 
      \| g(s) \|^2
      + 2\|\sigma_1
  (s) \|^2_{L^2(\R^n, l^2)}
      + 2 \| \psi_1 (s) \|_{L^1(\R^n)}
       )
      ds,
     \end{align}
  which  gives \eqref{cos1 1}.

       (ii). 
       Let $u_1 (t) =u_v(t, u_{0,1})$
        and
         $u_2 (t) =u_v(t, u_{0,2})$
   for $t\in [0,T]$. 
          By
           \eqref{contr1}  we have
   $$
{\frac {d}{dt}} (u_1(t)-u_2(t))
+  (-\Delta)^\alpha (u_1(t)-u_2(t))
+  \lambda (u_1(t)-u_2(t))  
$$
\be\label{cos1  p3a}
=-( F(t, \cdot, u_1 (t) )
-F(t, \cdot, u_2 (t) ))+
    \sigma(t, u_1 (t))  
    v (t)
    -
    \sigma(t, u_2 (t))  
    v (t)  .
\ee
By \eqref{cos1 p3a} we obtain
       $$
     {\frac d{dt}}
     \| u_1(t) -u_2(t)\|^2
     +2  \| (-\Delta)^{\frac {\alpha}2}
     (u_1(t)-u_2(t))\|^2
     +2 \lambda  \| 
     u_1(t)-u_2(t)\|^2
     $$
     $$
    =-
     2\int_{\R^n}
      (F(t,x, u_1(t)) -F(t,x, u_2(t)))  (u_1(t)-u_2(t)) dx
     $$
     $$
     +2
     \left (
     \sigma (t, u_1(t))v(t)-
     \sigma (t, u_2(t))v(t),
     \  u_1(t)-u_2(t)
     \right ),
    $$
    and hence by 
    \eqref{f3}  and \eqref{sig3}  we get
\begin{align}&
     {\frac d{dt}}
     \| u_1(t) -u_2(t)\|^2
     +2  \| (-\Delta)^{\frac {\alpha}2}
     (u_1(t)-u_2(t))\|^2
     +2 \lambda \| u_1(t)-u_2(t)\|^2  
      \nonumber\\
     & + 2\lambda_2
     \int_{\R^n}
     | u_1(t,x)
     -
       u_2(t,x)|^p 
       dx
    \label{cos1 p3}
    \le  
    2(
     \|\psi_4(t)\|_{L^\infty (\R^n)}
      + L_\sigma^{\frac 12} 
     \|v(t)\| _{l^2}
     )
    \|u_1(t)-u_2(t) \|^2.
           \end{align}
           By Gronwall's inequality,
           we infer  from \eqref{cos1 p3} that
           for all $t\in [0,T]$,
      $$
     \| u_1(t) -u_2(t)\|^2
     +2 \int_0^t  \| (-\Delta)^{\frac {\alpha}2}
     (u_1(s)-u_2(s))\|^2 ds
     + 2\lambda_2
     \int_0^t 
     \| u_1(s)
     -
       u_2(s)\|^p _{L^p(\R^n)}
       ds
        $$
 $$
    \le  
    e^{2( T
    \|\psi_4\|_{L^\infty(0,T; L^\infty(\R^n))}
    + L_\sigma^{\frac 12}  T^{\frac 12} N
    )} 
    \|u_{0,1} -u_{0,2}  \|^2,
    $$
    which implies \eqref{cos1 2}.
        \end{proof}

   It follows from Lemma \ref{cos1} that
   the solutions  $u_v$ of \eqref{contr1}-\eqref{contr2}
   are  uniformly bounded in the space
   $C([0,T], H)$ for all
   $v\in \overline{B}_N (L^2(0,T; \ell^2))$
   if the initial value $u_0 =0$.
  In the sequel, we will  establish the uniform estimates 
   of $u_v$ 
   in $L^\infty (\R^n)$
    for arbitrary  initial data in $H$.
    To that end, we need the following
    lemma on the positive part of truncation of solutions.
    From now on, we write $s_+ =\max \{s, 0\} $
    for $s\in \R$.

   \begin{lem}\label{energy}
   Suppose $w\in
    C([0,T], H)\bigcap L^2(0,T;V)
 \bigcap L^p(0,T; L^p(\R^n))$
 is a solution of 
 $$
 {\frac {dw}{dt}}(t)
 = f_1 (t) + f_2 (t) \quad
 \text{in} \quad 
 (V\cap L^p(\R^n))^*
 $$
 in the sense of scalar distribution
 on (0,T), where 
 $f_1\in L^2(0,T; V^*)$
 and $f_2\in L^q(0,T; L^q(\R^n))$
 with ${\frac 1q}
 +{\frac 1p} =1$.
 Let $r: [0,T] \to (0,\infty)$ be an absolutely
 continuous function
 such that $r\in W^{1,1} (0,T)$
 and $ E(t)
 ={\frac 12} \| 
 (w(t) -r(t))_+\|^2$.
 Then $E\in W^{1,1} (0,T)$ with weak derivative:
 \be\label{energy1}
 {\frac {dE}{dt}} (t)
= (f_1(t), (w(t) -r(t))_+)_{(V^*,V)}
+\int_{\R^n}
f_2(t,x) (w(t,x) -r(t))_+    dx
-r^\prime (t)
\int_{\R^n}
(w(t,x) -r(t))_+  dx.
\ee
  \end{lem}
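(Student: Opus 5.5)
We prove Lemma \ref{energy} by a time-regularization argument: approximate $w$ by functions that are smooth in time, apply the classical chain rule to the approximations, and pass to the limit. Set $X = V\cap L^p(\R^n)$, a reflexive Banach space with norm $\|\cdot\|_V+\|\cdot\|_{L^p(\R^n)}$. Since $w\in L^2(0,T;V)\cap L^p(0,T;L^p(\R^n))$ and $dw/dt=f_1+f_2\in L^2(0,T;V^*)+L^q(0,T;L^q(\R^n))$, the pair $(w,w')$ lies in the standard Lions--Magenes type setting, as in the proof of the energy equality in \cite{wangJDE2019}. We mollify in time, after extending $w$ by reflection near $0$ and $T$, to get $w_m\in C^1([0,T];X)$ with the following properties:
\begin{itemize}
\item $w_m\to w$ in $L^2(0,T;V)\cap L^p(0,T;L^p(\R^n))$ and in $C([0,T],H)$;
\item $w_m'=f_{1,m}+f_{2,m}$ with $f_{1,m}\to f_1$ in $L^2(0,T;V^*)$ and $f_{2,m}\to f_2$ in $L^q(0,T;L^q(\R^n))$.
\end{itemize}
Uniform convergence in $C([0,T],H)$ follows from the continuity of $w$ into $H$.

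For smooth $w_m$, define $E_m(t)=\frac12\|(w_m(t)-r(t))_+\|^2$. The map $\Phi(s)=\frac12 (s)_+^2$ is $C^1$ with Lipschitz derivative $s_+$, so pointwise in $x$ we have $\partial_t \Phi(w_m-r)=(w_m-r)_+(w_m'-r')$ for a.e. $t$. There is a subtlety: $(w_m(t,x)-r(t))_+$ is not in $L^2(\R^n)$ unless $r>0$. This is exactly where $r(t)>0$ is used, since $0\le (w-r)_+\le w_+$ and $(w-r)_+\le |w|\,\mathbb 1_{\{w>r\}}$. Because $r$ is continuous and positive on $[0,T]$, it has a positive minimum $r_0$. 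Chebyshev then gives $|\{w(t)>r_0\}|\le r_0^{-2}\|w(t)\|^2$, so $(w-r)_+\in L^1(\R^n)$ with
\[
\int_{\R^n}(w(t,x)-r(t))_+\,dx\le \|w(t)\|\,|\{w(t)>r_0\}|^{1/2}\le r_0^{-1}\|w(t)\|^2 .
\]
This makes the last term in \eqref{energy1} finite and integrable in $t$. Integrating in $x$ and $t$ gives the integrated form of \eqref{energy1} for $w_m$:
\[
E_m(t)-E_m(s)=\int_s^t\Big[(f_{1,m},(w_m-r)_+)_{(V^*,V)}+\int_{\R^n} f_{2,m}(w_m-r)_+\,dx-r'\int_{\R^n}(w_m-r)_+\,dx\Big]d\tau .
\]
Differentiation under the integral is justified by dominated convergence using these bounds.

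To pass to the limit $m\to\infty$ we need four facts.
\begin{itemize}
\item $(w_m-r)_+\to(w-r)_+$ in $L^2(0,T;V)$. Truncation $u\mapsto (u-c)_+$ is a contraction for the Gagliardo seminorm, since $|a_+-b_+|\le|a-b|$. Hence $\|(w_m-r)_+\|_V\le \|w_m\|_V$, which gives boundedness, and strong convergence follows from the continuity of truncation on $H^\alpha$. To avoid invoking that continuity result, one can use weak convergence in $L^2(0,T;V)$ together with strong convergence of $f_{1,m}$; this suffices for the pairing.
\item $(w_m-r)_+\to (w-r)_+$ in $L^p(0,T;L^p(\R^n))$, because truncation is $1$-Lipschitz pointwise.
\item $\int(w_m-r)_+dx\to \int (w-r)_+dx$ in $L^\infty(0,T)$. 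We use $|a_+-b_+|\le |a-b|\mathbb 1_{\{a>0\}\cup\{b>0\}}$ and Cauchy--Schwarz on the set $\{w_m>r_0\}\cup\{w>r_0\}$, whose measure is bounded uniformly by the Chebyshev estimate above. Combined with $r'\in L^1$, this controls the last term.
\item $E_m\to E$ uniformly on $[0,T]$.
\end{itemize}
The right-hand side is then an $L^1(0,T)$ function, so $E$ is absolutely continuous, lies in $W^{1,1}(0,T)$, and has the weak derivative \eqref{energy1}.

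The main obstacle is the term $r'\int(w-r)_+dx$ and the fact that $(w-r)_+$ is not a priori a legitimate test function: $r$ is merely $W^{1,1}$ and the domain has infinite measure. Both are handled by the positivity of $r$ via the Chebyshev bound, and by mollifying only $w$, never $r$. The chain rule in $t$ for $r$ is applied at the level of the scalar function $r\in W^{1,1}$, which is legitimate since $t\mapsto\Phi(w_m(t,x)-r(t))$ is absolutely continuous for each $x$.
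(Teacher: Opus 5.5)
Your argument is correct, but it takes a different route from the paper's. You regularize $w$ in time by reflection and mollification, apply the pointwise chain rule to $\frac12(w_m-r)_+^2$, and then pass to the limit. This forces you to control the truncations of the approximants, which you do through three facts:
\begin{itemize}
\item the $1$-Lipschitz and Gagliardo-contraction properties of $u\mapsto(u-c)_+$ (boundedness in $L^2(0,T;V)$ together with convergence in $L^2(0,T;H)$ gives the weak convergence you need against $f_{1,m}\to f_1$ strongly);
\item the pointwise $L^p$ estimate;
\item the Chebyshev bound $|\{w_m>r\}\cup\{w>r\}|\le r_0^{-2}(\|w_m\|^2+\|w\|^2)$ for the $L^1$-term.
\end{itemize}
You should also state explicitly that the reflected extension has distributional derivative equal to the reflected $f_1+f_2$, with no singular part at $t=0$ or $t=T$. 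This holds because $w$ is continuous into $H$, hence absolutely continuous into $(V\cap L^p(\R^n))^*$, so the boundary terms cancel.

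The paper never approximates $w$. It uses the convexity sandwich $(s_1)_+(s_2-s_1)\le\frac12(s_2)_+^2-\frac12(s_1)_+^2\le(s_2)_+(s_2-s_1)$ to bound the difference quotient $\delta^{-1}(E(t+\delta)-E(t))$ from both sides, and replaces $w(t+\delta)-w(t)$ by Steklov averages of $f_1$ and $f_2$. In the upper bound it shifts $t\mapsto t-\delta$, so the truncation is only ever evaluated as $(w(t)-r(t))_+$. It then passes to the limit against nonnegative test functions using translation continuity in Bochner spaces.

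What the paper's route buys is that the nonlinear truncation is applied only to $w$ itself. It therefore needs neither convergence of $(w_m-r)_+$ in $L^2(0,T;V)$ nor an extension across the endpoints. Your route is the more familiar Lions--Magenes scheme, and it yields directly the integrated identity $E(t)-E(s)=\int_s^t(\cdots)\,d\tau$ for all $s\le t$.

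One small slip: $(w-r)_+\in L^2(\R^n)$ already holds for $r\ge0$. Strict positivity of $r$ is what gives $(w-r)_+\in L^1(\R^n)$, and that is what you actually use.
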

   
   The proof of Lemma \ref{energy}
   is given in Appendix  A.
   Next, we derive
     the uniform estimates 
   of solutions 
   of \eqref{contr1}-\eqref{contr2}
   in $L^\infty (\R^n)$
    for all  initial data in $H$
    by using Lemma \ref{energy}.

    \begin{lem}\label{cos2}
 If    \eqref{f1}-\eqref{sig2} are fulfilled,
  then for every
  $T>0$  and 
  $N>0$,
  there exists $M_4=M_4(T,  N)>0$ such that
  for all $v\in \overline{B}_N (L^2(0,T; \ell^2))$
  and for all  $u_0\in H$,
  the solution
  $u_v$ of \eqref{contr1}-\eqref{contr2}
  satisfies  for all $0<t \le T$,
  \be\label{cos2 1}
   \| u_v (t, u_0)-u_v (t, 0)  \|_{L^\infty
   (\R^n)}
   \le M_4  t^{- {\frac 1{p-2}}}.
   \ee
   \end{lem}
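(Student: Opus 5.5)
Set $w(t)=u_v(t,u_0)-u_v(t,0)$. By \eqref{contr1} and \eqref{cos1 p3a}, $w$ solves
$$
\frac{dw}{dt}=-(-\Delta)^\alpha w-\lambda w-\bigl(F(t,\cdot,u_1)-F(t,\cdot,u_2)\bigr)+\kappa\bigl(\sigma_2(t,\cdot,u_1)-\sigma_2(t,\cdot,u_2)\bigr)v,
$$
where $u_1=u_v(\cdot,u_0)$ and $u_2=u_v(\cdot,0)$. This has the form required by Lemma \ref{energy}: the linear part lies in $L^2(0,T;V^*)$, the $F$-difference lies in $L^q(0,T;L^q)$ by \eqref{f2} and Lemma \ref{cos1}, and the $\sigma$-part lies in $L^1(0,T;H)\subset$ the admissible class after splitting (or $L^2(0,T;V^*)$ after a minor adjustment). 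I will bound the positive part $w$ above by a scalar barrier. The negative part is handled symmetrically by considering $-w$, equivalently by swapping the roles of $u_1,u_2$. The barrier is $r(t)=K t^{-1/(p-2)}$ with $K$ to be chosen depending only on $T,N$; it is $W^{1,1}$ on $[\tau,T]$ for each $\tau>0$. I apply Lemma \ref{energy} on $[\tau,T]$ and let $\tau\to0$, using that $E(\tau)\le\frac12\|w(\tau)\|^2$ is finite but that the barrier blows up.

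The steps are as follows. First, test with $(w-r)_+$. The fractional term gives $((-\Delta)^{\alpha/2}w,(-\Delta)^{\alpha/2}(w-r)_+)\ge 0$, because $(w(x)-w(y))((w-r)_+(x)-(w-r)_+(y))\ge0$ in the Gagliardo form and $r$ is constant in $x$; this is the positivity of the Dirichlet form. The $\lambda$-term is nonnegative on the set $\{w>r\}$. For the reaction term, on $\{w>r>0\}$ assumption \eqref{f3} gives
$$
(F(u_1)-F(u_2))\ge \lambda_2 w^{p-1}-\psi_4 w,
$$
after dividing by $w>0$. For the noise term, \eqref{sig2} gives $|\kappa(\sigma_2(u_1)-\sigma_2(u_2))v|\le \|\kappa\|_\infty(\sum\alpha_k)^{1/2}\|v\|_{\ell^2}\,w=:a(t)w$. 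Combining these with $w=(w-r)_++r$,
$$
E'(t)\le \int_{\R^n}\Bigl(-\lambda_2 w^{p-1}+(\|\psi_4\|_\infty+a(t))w-r'(t)\Bigr)(w-r)_+\,dx .
$$

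Second, I choose the barrier so that the integrand is nonpositive on $\{w>r\}$. Since $w>r$ there, it suffices to have $\lambda_2 r^{p-1}\ge -r'+(\|\psi_4\|_\infty+a(t))w$. The linear term must be absorbed differently, because $a\in L^2$ only. I would first remove it by an exponential factor: set $r(t)=e^{\int_0^t b}\,\rho(t)$ with $b=\|\psi_4\|_\infty+a$, whose integral is bounded by $T\|\psi_4\|_\infty+CN\sqrt T$. Alternatively, I can compare the weighted quantity $\tilde w=e^{-\int_0^t b}w$ directly, which satisfies the same structure with $\lambda_2$ replaced by $\lambda_2 e^{-(p-2)\int b}\ge c_{T,N}>0$. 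Then $\rho(t)=K t^{-1/(p-2)}$ solves $\rho'=-\frac{1}{p-2}K^{-(p-2)}\rho^{p-1}$, so $-\rho'\le c\rho^{p-1}$ holds once $K^{p-2}\ge 1/((p-2)c_{T,N})$. The result is $E'\le0$ on $(\tau,T]$, so $E(t)\le E(\tau)$.

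Third, I need $E(t)=0$. Here the main obstacle appears: $E(\tau)=\frac12\|(w(\tau)-r(\tau))_+\|^2$ does not obviously vanish as $\tau\to0$ when $w(0)=u_0$ is unbounded. Still, $r(\tau)\to\infty$ while $w(\tau)\to u_0$ in $H$, so
$$
\|(w(\tau)-r(\tau))_+\|\le\|w(\tau)-u_0\|+\|(u_0-r(\tau))_+\|\to0
$$
by dominated convergence. Hence $E(t)\le\lim_{\tau\to0}E(\tau)=0$, that is $w\le r(t)\le M_4t^{-1/(p-2)}$. I expect the delicate points to be justifying Lemma \ref{energy} with the singular barrier, which is handled by working on $[\tau,T]$, and treating the noise term, which lies only in $L^2$ in time, through the exponential weight rather than pointwise. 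The lower bound follows symmetrically, which gives \eqref{cos2 1}.
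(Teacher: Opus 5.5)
Your proof is correct. Its core mechanism is the paper's: test with $(w-r)_+$ via Lemma \ref{energy}, drop the fractional and damping terms by positivity, use \eqref{f3} on $\{w>r\}$, and absorb the linear terms through the factor $e^{\int_0^t b}$ with $b(t)=\|\psi_4(t)\|_{L^\infty(\R^n)}+L_\sigma^{1/2}\|v(t)\|_{\ell^2}$, which is essentially the paper's $\xi_v$.

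Where you genuinely differ is in handling initial data that lie only in $H$. The paper's barrier $r_m$ in \eqref{cos2 p5} solves $r_m'=b\,r_m-\lambda_2 r_m^{p-1}$ exactly and is finite at $t=0$, so Lemma \ref{energy} applies on all of $[0,T]$. The price is that the initial energy $\frac12\|(u_0-m)_+\|^2$ vanishes only when $u_0\in L^\infty(\R^n)$ and $m>\|u_0\|_{L^\infty(\R^n)}$. The paper therefore first proves an $m$-independent bound for bounded data, and then reaches arbitrary $u_0\in H$ by truncating $u_0$ and invoking the Lipschitz estimate \eqref{cos1 2}. Your singular barrier $e^{\int_0^t b}Kt^{-1/(p-2)}$ is essentially the $m\to\infty$ limit of $r_m$, with $\int_0^t e^{(p-2)\xi_v}$ replaced by its lower bound $t$. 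It costs you restricting Lemma \ref{energy} to $[\tau,T]$. The limit $\tau\to0$ then follows from $w\in C([0,T],H)$, $w(0)=u_0$, the $1$-Lipschitz property of $s\mapsto(s-c)_+$, and dominated convergence. So you treat every $u_0\in H$ in one pass and never need Lemma \ref{cos1}(ii).

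Three small corrections, none affecting the conclusion:
\begin{itemize}
\item With the unweighted barrier $r=e^{\int_0^t b}\rho$, the splitting $bw(w-r)_+=b(w-r)_+^2+br(w-r)_+$ leaves $E'\le 2bE$, not $E'\le 0$. You therefore need Gronwall, $E(t)\le e^{2\int_\tau^t b}E(\tau)$, before letting $\tau\to0$. This is harmless, since $\int_0^T b$ is bounded in terms of $T$ and $N$.
\item For the weighted function $\tilde w=e^{-\int_0^t b}w$ you do get $\tilde E'\le 0$, because the linear terms cancel exactly. However, the reaction coefficient becomes $\lambda_2 e^{(p-2)\int_0^t b}\ge\lambda_2$, not $\lambda_2 e^{-(p-2)\int_0^t b}$.
\item The control term needs no splitting or adjustment. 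The bound $\|(\sigma(t,u_1)-\sigma(t,u_2))v\|\le L_\sigma^{1/2}\|v(t)\|_{\ell^2}\|w(t)\|$, together with $w\in C([0,T],H)$, places it directly in $L^2(0,T;H)\subset L^2(0,T;V^*)$. By contrast, membership in $L^1(0,T;H)$ alone would not be admissible in Lemma \ref{energy}.
\end{itemize}
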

   
   \begin{proof}
     Note that
  \eqref{f3}
  implies that
  for $r, s, t \in \R$ and $x\in \R^n$,
  \be\label{cos2 p2}
  sgn (r-s) \left (
  F(t,x,r) -F(t,x, s)
  \right )
  \ge \lambda_2 |r-s|^{p-1}
  -\psi_4 (t,x) |r-s|,
  \ee
  By \eqref{sig23} we have
  for $t\in \R$ and $x\in \R^n$,
  \be\label{cos2 p3}
 \left  | \sigma(t, u_1 (t,x ))  
    v (t)
    -
    \sigma(t, u_2 (t,x ))  
    v (t) 
    \right  |
    \le L_\sigma^{\frac 12}
    \| v(t) \|_{\ell^2}
    \left |u_1 (t,x ) -u_2(t,x)\right |.
    \ee
    Denote by
   \be\label{cos2 p4a}
    \xi_v (t)
    =
     \int_0^t
            (\| \psi_4 (s)\|_{L^\infty (\R^n)}
            +L_\sigma^{\frac 12} \| v(s)\|_{\ell^2}) ds.
            \ee
            Then 
            for all 
            $t\in [0,T]$
            and
            $v\in \overline{B}_N (L^2(0,T; l^2))$
       we  have
       \be\label{cos2 p4} 
       0\le \xi_v (t) 
       \le 
       T \|\psi_4\| _{L^\infty
       (0,T; L^\infty
       (\R^n))}
       +L_\sigma^{\frac 12}
       N T^{\frac 12}.
       \ee

       Given $m>0$, define
       \be\label{cos2 p5}
       r_m (t)
       =e^{\xi_v (t) }
       \left(
       m^{-(p-2)}
       + \lambda_2 (p-2) \int_0^t
       e^{(p-2) \xi_v (s)} ds
       \right )^{
       -
       {\frac 1{p-2}}},
       \quad t\in [0,T].
       \ee
       By \eqref{cos2 p4} we have
       \be\label{cos2 p6}
         r_m (t)
       \le 
       e^{
       T \|\psi_4\| _{L^\infty
       (0,T; L^\infty
       (\R^n))}
       +L_\sigma^{\frac 12}
       N T^{\frac 12}       }
       \left(
        \lambda_2 (p-2)  t
       \right )^{
       -
       {\frac 1{p-2}}},
       \quad t\in (0,T].
       \ee
       On the other hand, by \eqref{cos2 p5}
       we see that
       $r_m$ is absolutely continuous on $[0,T]$
       and for  almost all $t \in (0,T)$,
       \be\label{cos2 p7}
       {\frac {d}{dt}}
       r_m (t)
       = 
       \left (\| \psi_4 (t)\|_{L^\infty (\R^n)}
            +L_\sigma^{\frac 12} \| v(t)\|_{\ell^2}
            \right )       r_m (t) 
       -\lambda_2 r_m^{p-1}  (t),
       \quad r_m (0) =m.
       \ee

       Let $u_1(t) = 
     u_{v} (t, u_{0})$,
     $u_2(t) = 
     u_{v} (t, 0)$
     and $w(t) = u_1 (t)
     -u_2 (t)$
   for $t\in [0,T]$. 
          By
           \eqref{contr1} we have
   \be\label{cos2 p8}
{\frac {d}{dt}}  w(t)
=-  (-\Delta)^\alpha  w(t)
-  \lambda  w(t)
-( F(t, \cdot, u_1 (t) )
-F(t, \cdot, u_2 (t) ))+
    (\sigma(t, u_1 (t))  
    -
    \sigma(t, u_2 (t)) 
    ) 
    v (t)  
\ee
in $(V\bigcap L^p(\R^n))^*$
in the sense of scalar distribution
on $(0,T)$.
By
\eqref{sig3}
and  \eqref{cos1 1}  we have
\be\label{cos2 p9}
-  (-\Delta)^\alpha  w
-  \lambda  w+
    (\sigma(\cdot, u_1 )  
    -
    \sigma(\cdot, u_2 ) 
    ) 
    v   \in L^2(0,T; V^*).
    \ee
    By \eqref{f2} and \eqref{cos1 1} we have
    \be\label{cos2 p10}
F(\cdot, \cdot, u_1  )
-F(\cdot, \cdot, u_2  )
   \in L^q(0,T; L^q(\R^n)).
\ee
It follows from
\eqref{cos2 p8}-\eqref{cos2 p10} and 
Lemma \ref{energy} that
$$
{\frac 12}
{\frac d{dt}}
\| (w(t) - r_m (t)) _+\|^2
+ (  (-\Delta)^\alpha  w(t),
(w(t) - r_m (t)) _+)_{(V^*, V)}
+  \lambda  ( w(t), (w(t) - r_m (t)) _+)
$$
$$
=
-\int_{R^n}
( F(t, x, u_1 (t) )
-F(t, x, u_2 (t) ) )
(w(t,x) - r_m (t)) _+ dx
$$
\be\label{cos2 p11}
+
    ((\sigma(t, u_1 (t))  
    -
    \sigma(t, u_2 (t)) 
    ) 
    v (t) , (w(t) - r_m (t)) _+)
     -r_m^\prime  (t) \int_{\R^n}
( w(t,x) - r_m (t)) _+ dx.
   \ee
     Since $s\to (s-r_m)_+$ is increasing,
     for the second term on the left-hand side
     of \eqref{cos2 p11} we have
     $$
     (  (-\Delta)^\alpha  w(t),
(w(t) - r_m (t)) _+)_{(V^*, V)}
$$
\be\label{cos2 p12}
={\frac 12} C(n,\alpha)
\int_{\R^n} \int_{\R^n}
{\frac 
{(w(t,x) -w(t,y))
(
(w(t,x) - r_m (t)) _+
-(w(t,y) - r_m (t)) _+) }
{|x-y|^{n+2 \alpha}}
}dxdy
\ge 0.
\ee
For the third term on the left-hand side of 
\eqref{cos2 p11} we have
\be\label{cos2 p13}
 \lambda  ( w(t), (w(t) - r_m (t)) _+)
=\lambda \int_{\{w(t,x)-r_m >0\}} w(t) (w(t,x) -r_m) dx \ge 0.
\ee
By \eqref{cos2 p2}, for the first term on the right-hand side 
of \eqref{cos2 p11} we obtain
$$
\int_{R^n}
( F(t, x, u_1 (t) )
-F(t, x, u_2 (t) ) )
(w(t,x) - r_m (t)) _+ dx
$$
$$
\ge 
\lambda_2
\int_{
\{ w(t,x)-r_m >0\}
}
w^{p-1}(t,x) (w(t,x) -r_m) dx
-
\|\psi_4 (t)\|_{L^\infty
(\R^n)}
\|( w(t) - r_m (t)) _+\|^2  
$$
\be\label{cos2 p13}
- \|\psi _4 (t) \|_{L^\infty
(\R^n)}
r_m (t) \int_{\R^n}
( w(t,x) - r_m (t)) _+ dx.
\ee
By \eqref{cos2 p3},
for the second term on the right-hand side of
\eqref{cos2 p11} we get
$$
 ((\sigma(t, u_1 (t))  
    -
    \sigma(t, u_2 (t)) 
    ) 
    v (t) , (w(t) - r_m (t)) _+)
    $$
    \be\label{cos2 p14}
    \le L_\sigma^{\frac 12}\|v(t)\|_{\ell^2}
    \| (w(t) -r_m (t) )_+ \|^2
    +  L_\sigma^{\frac 12}\|v(t)\|_{\ell^2} r_m (t)
    \int_{\R^n}
   (w(t,x) -r_m (t) )_+  dx.
   \ee
   
   By \eqref{cos2 p11}-\eqref{cos2 p14} we obtain
   $$
{\frac 12}
{\frac d{dt}}
\| (w(t) - r_m (t)) _+\|^2
+\lambda_2 
\int_{
\{ w(t,x)-r_m(t) >0\}
}
w^{p-1} (t,x) (w(t,x) -r_m) dx
$$
$$
\le 
(L_\sigma^{\frac 12}
\| v(t) \|_{\ell^2 } 
+
\|\psi_4 (t)\|_{L^\infty
(\R^n)} )
\|( w(t) - r_m (t)) _+\|^2  
$$
$$
+ 
(L_\sigma^{\frac 12}
\| v(t) \|_{\ell^2 } 
+
\|\psi_4 (t)\|_{L^\infty
(\R^n)} )
r_m (t) 
\int_{\R^n}
(w(t,x) - r_m (t)) _+ dx
$$
\be\label{cos2 p20}
     -r_m^\prime  (t) \int_{\R^n}
( w(t,x) - r_m (t)) _+ dx.
   \ee
   By \eqref{cos2 p7} and \eqref{cos2 p20} we get
   $$
   {\frac 12}
{\frac d{dt}}
\| (w(t) - r_m (t)) _+\|^2
+\lambda_2 
\int_{
\{ w(t,x)-r_m (t)>0\}
}
(w^{p-1} (t,x)-r_m^{p-1} (t) ) (w(t,x) -r_m (t) ) dx
$$
\be\label{cos2 p21}
\le 
(L_\sigma^{\frac 12}
\| v(t) \|_{\ell^2 } 
+
\|\psi_4 (t)\|_{L^\infty
(\R^n)} )
\|( w(t) - r_m (t)) _+\|^2 .
   \ee
      Since the second term on the left-hand side
      of \eqref{cos2 p21} is nonnegative,
      we
      obtain
       \be\label{cos2 p22}
{\frac d{dt}}
\| (w(t) - r_m (t)) _+\|^2
 \le  2
(L_\sigma^{\frac 12}
\| v(t) \|_{\ell^2 } 
+
\|\psi_4 (t)\|_{L^\infty
(\R^n)} )
\|( w(t) - r_m (t)) _+\|^2 .
   \ee

     We first consider the case
   $u_0\in H \bigcap L^\infty(\R^n)$.
   In this case,
   we  choose $m> \| u_0\|_{L^\infty(\R^n)}$.
   Then we have 
   $$
   w(0,x ) - r_m (0)  = u_0(x) -m \le 0
   \quad \text{for almost all } \ x\in \R^n,
   $$
   and hence by
      \eqref{cos2 p4a}
      and     Gronwall's inequality
   we obtain from
   \eqref{cos2 p22} that
   for all $t\in [0,T]$,
   $$
   \| (w(t) - r_m (t)) _+\|^2
 \le 
 e^{
 2 \int_0^t
(L_\sigma^{\frac 12}
\| v(s) \|_{\ell^2 } 
+
\|\psi_4 (s)\|_{L^\infty
(\R^n)} )ds}
\|( w(0) - r_m (0)) _+\|^2 
   $$
   \be\label{cos2 p23}
   =
   e^{2 \xi_v (t)} \|( w(0) - r_m (0)) _+\|^2 =0.
   \ee
   
   By \eqref{cos2 p23}
   we have
   for  every $t\in [0,T]$,
   $w(t) \le r_m (t)$ almost everywhere on
   $\R^n$.
   Similarly, by repeating the above procedure
   for   
   $(- w(t) - r_m (t)) _+   $ we can obtain
   $- w(t) \le r_m (t)$ almost everywhere on
   $\R^n$, and thus we get
     $\|w(t)\|_{L^\infty
     (\R^n)}
       \le r_m (t)$, which along with
       \eqref{cos2 p6} implies that
       for every $u_0 \in H\bigcap
       L^\infty (\R^n)$,
       \be\label{cos2 p30}
       \| w(t, u_0)\|_{L^\infty
       (\R^n)}
     \le     
       e^{
       T \|\psi_4\| _{L^\infty
       (0,T; L^\infty
       (\R^n))}
       +L_\sigma^{\frac 12}
       N T^{\frac 12}       }
       \left(
        \lambda_2 (p-2)  t
       \right )^{
       -
       {\frac 1{p-2}}},
       \quad  \forall \  t\in (0,T].
       \ee

       We now consider the case $u_0$ is an arbitrary element
       in $H$. 
       In this case,  we set
       $u_{0,m} (x) =  u_0 (x)$ if $| u_{0} (x) | \le m$;
       and  $u_{0,m} (x) =  0$  otherwise.
       By \eqref{cos2 p30} we have
        for every  $m\in \N$ and 
        $t\in (0,T]$,
       \be\label{cos2 p31}
       \| w(t, u_{0,m} ) \|_{L^\infty
       (\R^n)}
     \le     
       e^{
       T \|\psi_4\| _{L^\infty
       (0,T; L^\infty
       (\R^n))}
       +L_\sigma^{\frac 12}
       N T^{\frac 12}       }
       \left(
        \lambda_2 (p-2)  t
       \right )^{
       -
       {\frac 1{p-2}}} =\zeta (t).
       \ee
      By \eqref{cos1 2} we see that
       for all $t\in [0,T]$,
      \be\label{cos2 p32}
       \| u_v (t, u_{0,m})-u_v (t, u_0)
           \|^2
           \le c_1 \| u_{0,m} -u_0\|^2,
          \ee
           where $c_1>0$ is a number depending only
           on $T$ and $N$, but not on $m$.
           By \eqref{cos2 p32}  we find  that
           for all $t\in [0,T]$,
      \be\label{cos2 p33}
       \| w (t, u_{0,m})- w (t, u_0)
           \|^2
           \le c_1 \| u_{0,m} -u_0\|^2.
          \ee
          By \eqref{cos2 p31} and \eqref{cos2 p33} we infer that
          for every $u_0 \in H$ and  $t\in (0,T]$,
          $$
           \| ( |w (t, u_0)|  -\zeta (t))_+\|^2
           \le 
            \| w (t, u_{0})- w (t, u_{0,m} )
           \|^2 \to 0, \quad \text{as} \  m \to \infty,
           $$
           and hence
           for every $u_0 \in H$ and  $t\in (0,T]$,
          $$
           \| ( |w (t, u_0)|  -\zeta (t))_+\|^2=0,
           $$
           which implies that
           \be\label{cos2 p35}
           \| w(t, u_0)\|_{L^\infty
           (\R^n)}
           \le \zeta (t),
           \quad 
           \forall \ u_0\in H
           \ \text{and} \ 0<t\le T.
           \ee
           Then by \eqref{cos2 p31} and \eqref{cos2 p35}
           we obtain \eqref{cos2 1}.
          \end{proof}

 Next,
 we recall  
  the  
    continuity of
    solutions  of   \eqref{contr1}-\eqref{contr2}
      with respect to  controls 
 from  the weak topology of  $L^2(0,T; \ell^2)$
 to the norm topology of 
 $  C([0,T], H)\bigcap L^2(0,T;V)
 \bigcap L^p(0,T; L^p(\R^n))$, which is based
 on the uniform tail-ends estimates of solutions
 (see \cite{wangSD2026} for more details).

 \begin{lem}  {\rm (\cite[Lemma 4.4]{wangSD2026})}
 \label{wc}
 If
  \eqref{f1}-\eqref{sig2} hold and
       $v_n \to v$ weakly in 
  $L^2(0,T; \ell^2)$,
  then
  $ u_{v_n} \to u_v $ strongly
  in  
 $  C([0,T], H)\bigcap L^2(0,T;V)
 \bigcap L^p(0,T; L^p(\R^n))
 $. 
   \end{lem}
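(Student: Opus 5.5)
The plan is the standard compactness-plus-identification argument for the skeleton equation \eqref{contr1}-\eqref{contr2}. We fix $u_0\in H$ and write $u_n=u_{v_n}$, $u=u_v$. Since weakly convergent sequences are bounded, there is $N$ with $\|v_n\|_{L^2(0,T;\ell^2)}\le N$ for all $n$. Lemma \ref{cos1}(i) then bounds $u_n$ uniformly in $C([0,T],H)\cap L^2(0,T;V)\cap L^p(0,T;L^p(\R^n))$. By \eqref{f2} and \eqref{sig4}, the equation also bounds $du_n/dt$ uniformly in $L^2(0,T;V^*)+L^q(0,T;L^q(\R^n))$.

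The first step is strong convergence on bounded regions. On each ball $\mathcal O_k=\{|x|<k\}$ the restriction $V\to L^2(\mathcal O_k)$ is compact, so Aubin--Lions (localized with a cutoff) gives a subsequence converging strongly in $L^2(0,T;L^2(\mathcal O_k))$ for every $k$ by a diagonal argument. The second step, which I expect to be the main obstacle, is a uniform tail estimate: for every $\eta>0$ there are $K$ and $n_0$ with
$$\sup_n\sup_{t\in[0,T]}\int_{|x|\ge K}|u_n(t,x)|^2dx<\eta .$$
I would get this by testing the equation with $\rho(|x|/K)^2u_n$, where $\rho$ is a smooth cutoff vanishing on $[0,1]$ and equal to $1$ on $[2,\infty)$. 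Three contributions must be controlled. The nonlocal commutator term can be estimated by $CK^{-\alpha}\|u_n\|_V^2$, using the standard fractional-Laplacian commutator bound. The forcing terms $g$, $\psi_1$ and $\sigma_1$ are small on $|x|\ge K$ because they are integrable in $x$. The multiplicative term $\kappa\sigma_2(u_n)v_n$ is bounded by $\|v_n\|_{\ell^2}$ times the tail of $u_n$, which Gronwall absorbs, with a constant depending only on $N$ and $T$. Combined with the local strong convergence, the tail estimate gives $u_n\to w$ in $L^2(0,T;H)$.

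The third step identifies $w=u$. Passing to the limit in the weak formulation, $F(u_n)\to F(w)$ weakly in $L^q$, using a.e. convergence and boundedness. Moreover $\sigma(u_n)v_n\rightharpoonup\sigma(w)v$, because $\sigma(u_n)\to\sigma(w)$ strongly in $L^2(0,T;\mathcal L_2(\ell^2,H))$ by \eqref{sig3} while $v_n\rightharpoonup v$. Uniqueness of solutions of the skeleton equation then gives $w=u$, and hence convergence of the whole sequence.

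The final step upgrades to the full norm. Subtracting the equations and using \eqref{f3} as in \eqref{cos1 p3}, we have
$$\frac{d}{dt}\|u_n-u\|^2+2\|(-\Delta)^{\alpha/2}(u_n-u)\|^2+2\lambda_2\|u_n-u\|_{L^p}^p\le c\|u_n-u\|^2\bigl(1+\|v_n\|_{\ell^2}\bigr)+2\bigl(\sigma(u)(v_n-v),u_n-u\bigr).$$
The last term is split as $(\sigma(u)(v_n-v),u_n-u)$. Its integral over time tends to zero because $\int_0^t(\sigma(u)(v_n-v),u_n-u)\,ds$ is bounded by $\|\sigma(u)\|_{L^2(0,T;\mathcal L_2)}N\|u_n-u\|_{L^2(0,T;H)}$, which vanishes by the previous step; the tail of $u_n-u$ is controlled uniformly by the tail estimate. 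Gronwall then gives convergence in $C([0,T],H)\cap L^2(0,T;V)\cap L^p(0,T;L^p(\R^n))$.
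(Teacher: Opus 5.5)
Your proposal is correct and follows the route the paper relies on: it defers to the cited Lemma 4.4, which it describes as combining compactness on bounded spatial regions with uniform tail-end estimates, and your steps (local Aubin--Lions compactness, a uniform tail estimate from testing with $\rho(|x|/K)^2u_n$, identification of the limit by uniqueness, and an energy estimate for $u_{v_n}-u_v$ to upgrade the convergence) are exactly that. One small repair is needed in the last step, because the three-factor H\"older bound with $\|\sigma(u)\|_{L^2(0,T;\mathcal{L}_2(\ell^2,H))}$ does not follow from H\"older's inequality; it should read $\int_0^T|(\sigma(u)(v_n-v),u_n-u)|\,dt\le 2N\,\|\sigma(u)\|_{L^\infty(0,T;\mathcal{L}_2(\ell^2,H))}\|u_n-u\|_{L^2(0,T;H)}$, which is legitimate since $\sigma_1\in L^\infty_{loc}(\R,L^2(\R^n,\ell^2))$ and $u\in C([0,T],H)$ make $\|\sigma(u(\cdot))\|_{\mathcal{L}_2(\ell^2,H)}$ bounded on $[0,T]$.
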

   \begin{proof}
   Under conditions \eqref{f1}-\eqref{sig2},
   the proof of this lemma is quite similar to 
   \cite[Lemma 4.4]{wangSD2026}
   with minor modifications, and hence is not
   repeated here.
   \end{proof}

    Given
    $u_0\in H$,
    define  an operator
   $\calg^0_{u_0}: C([0,T], U) \to C([0,T], H)
  \bigcap L^2(0,T; V)
  \bigcap
  L^p(0,T; L^p(\R^n))$  by:
   for  $\xi \in  C([0,T], U)$, 
  \be\label{calg0}
\calg^0_{u_0} (\xi)
=
\left \{
\begin{array}{ll}
u_v(\cdot, u_0),  & \text{ if } \xi= \int_0^\cdot v(t) dt
\ \text{ with } v\in L^2(0,T; \ell^2);\\
0, &  \text{ otherwise} ,
\end{array}
\right.
\ee
 where $u_v(\cdot, u_0)$ is the
 solution
 of \eqref{contr1}-\eqref{contr2}.
 Given   $T>0$ and  $u_0\in H$, 
  define  a mapping 
  $I_{T, u_0}:
    C([0,T], H)
  \bigcap L^2(0,T; V) \bigcap
  L^p(0,T; L^p(\R^n)) \to [0, +\infty]$ by: 
\be\label{rate}
 I_{T, u_0} (\phi)
 =\inf
 \left \{
 {\frac 12} \int_0^T \| v(s)\|_{\ell^2} ^2 ds:
 \ v\in L^2(0,T; \ell^2), \ u_v(\cdot, u_0)  =\phi
 \right \},
\ee
for every
  $\phi \in   C([0,T], H)
  \bigcap L^2(0,T; V) \bigcap
  L^p(0,T; L^p(\R^n))  $,  where $u_v(\cdot, u_0)$ is the
 solution
 of \eqref{contr1}-\eqref{contr2}.
By default,   $\inf \emptyset =+\infty$.
For each $u_0\in H$, $T>0$ and $s\ge0$,
denote by 
$$
 I_{T,u_0}^s
 =\left\{u_v(\cdot,u_0):v\in L^2(0,T;\ell^2),\quad
                       \frac12\int_0^T\|v(r)\|_{\ell^2}^2\,dr\le s\right\}.
    $$

 By Lemma \ref{wc}
 (see \cite{wangSD2026}), 
 one can verify that
 the rate function $I_{T,
 u_0}$ given by
 \eqref{rate} is a good rate function:
 for every $s\ge 0$,
 the set
 $I_{T,u_0}^s$ is  compact 
 in
 $
  C([0,T], H)
  \bigcap L^2(0,T; V) \bigcap
  L^p(0,T; L^p \R^n)) $,  and hence 
  compact in $C([0,T],H)$. 
   In particular, the endpoint set
$$
 \mathcal K_{T,s}
 =\left\{u_v(T, u_0 ):v\in L^2(0,T;\ell^2),\quad
                  \frac12\int_0^T\|v(r)\|_{\ell^2}^2\,dr\le s\right\}
$$
is compact in $H$.
Whenever
$I_{T,u_0}(\phi)<\infty$, the infimum in \eqref{rate} is attained.
 Moreover, 
   for every $N>0$, the set
\be\label{comg0}
 \left \{
\calg^0_{u_0}
\left (
\int_0^\cdot v(t) dt
\right ) : \  v\in {\overline{B}}_N(L^2(0,T; \ell^2))
\right \}
\ \ \text{is a compact subset
of  } \ 
\ee
 $ C([0,T], H)
  \bigcap L^2(0,T; V)\bigcap L^p
  (0,T; L^p(\R^n))$.

     The following lemma
    is concerned with
  the convergence of solutions
 of \eqref{intr1}-\eqref{intr2} as
 $\eps \to 0$.

 \begin{lem}\label{cp}
 	 If  \eqref{f1}-\eqref{sig2}
 	are fulfilled,  then for all
 	  $N>0$    and
 	$\eta>0$,  
 $$
 \sup_{u_0
 \in H
 	}
 \sup_{v\in\cala_N}
 P
 \left ( 
  \|	\calg^\eps_{u_0}
 	  (
 	W +\eps^{-\frac 12}\int_0^\cdot
 	v (t) dt
   )
 	-
 	\calg^0_{u_0}
   ( \int_0^\cdot
 	v (t) dt
 	  )\|
 	  _{
 	 C([0,T], H) \bigcap L^2(0,T; V)
 	 \bigcap L^p(0,T;
 	 L^p(\R^n) )
} >\eta 
\right )
 	  $$  
 	  $$\longrightarrow  0,
 	  \quad\text{ as } \eps \to 0.
 	  $$
  \end{lem}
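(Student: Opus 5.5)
By Girsanov's theorem, for $v\in\cala_N$ the process $u^\eps_v=\calg^\eps_{u_0}(W+\eps^{-1/2}\int_0^\cdot v\,dt)$ solves the controlled stochastic equation
\[
du^\eps_v+\bigl((-\Delta)^\alpha u^\eps_v+\lambda u^\eps_v+F(t,\cdot,u^\eps_v)\bigr)dt=g\,dt+\sigma(t,u^\eps_v)v\,dt+\sqrt\eps\,\sigma(t,u^\eps_v)\,dW,\qquad u^\eps_v(0)=u_0 .
\]
Meanwhile $u_v=\calg^0_{u_0}(\int_0^\cdot v\,dt)$ solves \eqref{contr1}-\eqref{contr2} pathwise, with $v=v(\omega)$ in the closed ball of radius $N$. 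Set $e=u^\eps_v-u_v$.

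\textbf{Energy identity.} Apply It\^o's formula to $\|e\|^2$ and use \eqref{f3} and \eqref{sig3}. This gives
\[
d\|e\|^2+2\|(-\Delta)^{\alpha/2}e\|^2dt+2\lambda_2\|e\|_{L^p}^p dt\le 2\bigl(\|\psi_4\|_\infty+L_\sigma^{1/2}\|v\|_{\ell^2}\bigr)\|e\|^2dt+\eps\|\sigma(t,u^\eps_v)\|^2_{\call_2}dt+2\sqrt\eps\,(e,\sigma(t,u^\eps_v)dW).
\]
The drift coefficient has integral bounded by a constant $c(T,N)$ independent of $u_0$, so a stochastic Gronwall argument (multiply by $e^{-2\xi_v(t)}$, with $\xi_v$ as in \eqref{cos2 p4a}) reduces everything to controlling two terms uniformly in $u_0\in H$. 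These are $\eps\int_0^T\|\sigma(t,u^\eps_v)\|^2_{\call_2}dt$ and the martingale $\sqrt\eps\sup_t|\int_0^t e^{-2\xi_v}(e,\sigma dW)|$. By Lemma \ref{cos1} the $L^2(0,T;V)$ and $L^p(0,T;L^p)$ norms of $e$ then follow from the same inequality, so it is enough to show that $\sup_t\|e(t)\|^2$ and these integrals tend to zero in probability uniformly in $u_0$ and $v$.

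\textbf{Main obstacle: uniformity in $u_0$.} The noise term involves $\|u^\eps_v\|^2$ through \eqref{sig4}, which depends on $\|u_0\|$. I would split
\[
\sigma(t,u^\eps_v)=\bigl[\sigma(t,u^\eps_v)-\sigma(t,u_v)\bigr]+\sigma(t,u_v).
\]
For the first piece, \eqref{sig3} gives the bound $L_\sigma\|e\|^2$, which is absorbed by Gronwall. For the second piece I use the localized structure \eqref{sig1} together with Lemma \ref{cos2}. Write $u_v(t,u_0)=u_v(t,0)+w(t)$ with $\|w(t)\|_{L^\infty}\le M_4t^{-1/(p-2)}$. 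Since $\kappa\in L^2$ and $\sigma_{2,k}$ is Lipschitz with $\sigma_{2,k}(t,x,0)=0$,
\[
\|\sigma(t,u_v(t,u_0))\|^2_{\call_2}\le 3\|\sigma_1(t)\|^2+3L_\sigma\|u_v(t,0)\|^2+3\Bigl(\sum_k\alpha_k\Bigr)\|\kappa\|^2M_4^2\,t^{-2/(p-2)}.
\]
Here $\sup_t\|u_v(t,0)\|\le M_2$ by Lemma \ref{cos1}(i), and $t^{-2/(p-2)}$ is integrable on $(0,T)$ exactly because $p>4$. So $\int_0^T\|\sigma(t,u_v)\|^2_{\call_2}dt\le C(T,N)$ uniformly in $u_0\in H$ and $v$, and this is the crucial step.

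\textbf{Stopping and conclusion.} Introduce the stopping time $\tau=\inf\{t:\|e(t)\|>1\}\wedge T$. Before $\tau$, $\int_0^\tau\|\sigma(t,u^\eps_v)\|^2_{\call_2}dt\le C(T,N)$ uniformly. The Burkholder--Davis--Gundy inequality then gives
\[
\E\sup_{t\le\tau}\Bigl|\sqrt\eps\int_0^t(e,\sigma dW)\Bigr|\le C\sqrt\eps\,\E\Bigl(\sup_{t\le\tau}\|e\|^2\int_0^\tau\|\sigma\|^2_{\call_2}\Bigr)^{1/2}\le C(T,N)\sqrt\eps .
\]
Hence $\E\sup_{t\le\tau}\|e\|^2+\E\int_0^\tau(\|e\|_V^2+\|e\|_{L^p}^p)\le C(T,N)\sqrt\eps$ uniformly in $u_0$ and $v$. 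By Chebyshev, $P(\tau<T)\le P(\sup_{t\le\tau}\|e\|\ge1)\le C\sqrt\eps$. Combining the two estimates yields
\[
P(\|e\|_{\mathcal E_T}>\eta)\le C(T,N,\eta)\sqrt\eps\to0,
\]
uniformly in $u_0\in H$ and $v\in\cala_N$, which is the claim.
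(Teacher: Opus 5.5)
Your proposal is correct and follows essentially the same route as the paper. In both, the decisive step is splitting $\sigma(t,u^\eps_v)=[\sigma(t,u^\eps_v)-\sigma(t,u_v)]+\sigma(t,u_v)$ and then bounding $\int_0^T\|\sigma(t,u_v(t,u_0))\|^2_{\call_2(\ell^2,H)}\,dt$ uniformly in $u_0\in H$, using $\kappa\in L^2(\R^n)$, Lemma \ref{cos2}, and the integrability of $t^{-2/(p-2)}$ for $p>4$. The only difference is technical: you localize with the stopping time at $\|e\|=1$ and get an $O(\sqrt\eps)$ rate, whereas the paper absorbs the BDG term by Young's inequality and obtains $\E\sup_t\|e(t)\|^2\le c_5\eps$.
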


\begin{proof}
Let
 $u_0\in H$, 
 $v\in \cala_N$ and 
 $u^\eps_v
=
 \calg^\eps_{u_0}
\left (
W +\eps^{-\frac 12}\int_0^\cdot
v (t) dt
\right )$.
Then by \eqref{sys1}-\eqref{sys2}  we have
 \be\label{geps 1}
  du_v^{\eps}  
  + (-\Delta)^ \alpha  u_v^{\eps}   dt
  +
  \lambda    u_v^{\eps}   dt
  + F(t, \cdot, u_v^{\eps} ) dt
  =  
    g(t)   dt
    +  \sigma (t,   u_v^{\eps} )    v dt
  +\sqrt{\eps} \sigma (t,   u_v^{\eps} )    {dW} , 
  \ee  
 with 
 $
 u^\eps_{v} (0)=u_0. $ 
   Let 
 $u_v=
 \calg^0_{u_0}
 \left (
 \int_0^\cdot
 v(t) dt
 \right )   $. 
By   
  \eqref{contr1} 
  and
  \eqref{geps 1}
  we find  that  
    $u_{v}^\eps -
     u_{v}$ satisfies
  \begin{align}
  &
 d  ( u_{v}^\eps -  u_{v}  )
 +     (-\Delta)^ \alpha ( u^\eps_{v}  -  u_{v  }
 ) dt 
 +\lambda
   ( u^\eps_{v}  -  u_{v  }
 ) dt 
 +
   ( F(t,\cdot, u^\eps_{v}  ) -  
   F(t, \cdot, u_{v}  ) ) dt   
 \nonumber\\
 &
 \label{cp p1}
  =
\left ( 
 \sigma(t, u^\eps_{v}  )  v
 -
  \sigma(t, u_{v}  )  v
\right )  dt
 + \sqrt{\eps} \sigma(t, u^\eps_{v}  )  
   dW  ,
\end{align}
with  
$u_{v}^\eps(0) -  u_{v} (0)=0$.
 By  \eqref{f3}
 and 
 \eqref{cp p1} 
 we get
 for all $t\in [0,T]$,
\begin{align}
&
 \|  u_{v}^\eps(t) -  u_{v}(t) \|^2
+2 
\int_0^t
\|  (-\Delta)^{\frac {\alpha}2}
( u^\eps_{v}(s)  -  u_{v} (s)
 ) \|^2  ds
 +2 \lambda 
 \int_0^t
 \|  
   u^\eps_{v}(s)  -  u_{v} (s)
   \|^2  ds
 \nonumber\\
 &
 +2\lambda_2 \int_0^t
 \int_{\R^n}
 \left  
 |  u^\eps_{v}(s,x)  -  u_{v} (s,x)
 \right |^p  dx ds
 \nonumber\\
 &
\le 2 \|\psi_4\|_{L^\infty(0,T;
L^\infty (\R^n))}
\int_0^t
\| u^\eps_{v}(s)  -  u_{v} (s)
\|^2 ds
 \nonumber\\
 &
 +
  2 
  \int_0^t
  \left ( 
 \sigma(s, u^\eps_{v} (s) )  v(s)
 -
  \sigma(s, u_{v} (s)  )  v (s),
  \  u^\eps_{v}(s)  -  u_{v}(s)
\right )  ds
 \nonumber\\
 &
   \label{cp p2}
+ \eps
\int_0^t
 \|   \sigma(s, u^\eps_{v} (s)  )\|
_{\call_2(\ell ^2, H)}^2  ds
+ 
2\sqrt{\eps} \int_0^t
\left (
u^\eps_{v}(s)  -  u_{v}(s),\
 \sigma(s, u^\eps_{v} (s)  ) 
   dW  \right ) .
\end{align}
     By  
  \eqref{sig3} 
    we have
\begin{align} \label{cp p3}
&
  2 
  \int_0^t
  \left |
  \left ( 
 \sigma(s, u^\eps_{v} (s) )  v(s)
 -
  \sigma(s, u_{v} (s)  )  v (s),
  \  u^\eps_{v}(s)  -  u_{v}(s)
\right ) 
\right |  ds 
  \le   2L_\sigma^{\frac 12}
  \int_0^t
   \| v (s) \|_{\ell ^2}
    \|
  u^\eps_{v}(s)  -  u_{v}(s)
  \| ^2  ds,
 \end{align}
and
$$
\int_0^t
\|   \sigma(s, u^\eps_{v} (s)  )\|
_{\call_2(\ell ^2, H)}^2  ds
$$
$$
\le
2 
\int_0^t
\|   \sigma(s, u^\eps_{v} (s)  )
-\sigma(s, u_{v} (s)  )
\|
_{\call_2(\ell ^2, H)}^2  ds
+
2 
\int_0^t
\|    
 \sigma(s, u_{v} (s)  )
\|
_{\call_2(\ell ^2, H)}^2  ds
$$
 \be  \label{cp p3a}
\le
2  L_\sigma 
\int_0^t
\|   u^\eps_{v} (s)   
-   u_{v} (s)   
\| ^2  ds
+
2 
\int_0^t
\|    
\sigma(s, u_{v} (s)  )
\|
_{\call_2(\ell ^2, H)}^2  ds
  . 
\ee 
It follows from
 \eqref{cp p2}-\eqref{cp p3a} 
 that
for all $t\in [0,T]$
and $\eps \in (0,1)$,
\begin{align}
&
 \|  u_{v}^\eps(t) -  u_{v}(t) \|^2
+2 
\int_0^t
\|  (-\Delta)^{\frac {\alpha}2}
( u^\eps_{v}(s)  -  u_{v} (s)
 ) \|^2  ds
  + 2 \lambda_2 \int_0^t
 \int_{\R^n}
  |u^\eps_{v}(s,x)
 -  u_{v}(s,x) |^p 
  dx ds
 \nonumber\\
 &
\le  2
\int_0^t
\left (
  \|\psi_4\|_{
 L^\infty (0,T; L^\infty(\R^n))}
+
L_\sigma^{\frac 12}
\| v(s)\|_{\ell^2}
+ L_\sigma  
\right )
\| u^\eps_{v}(s)  -  u_{v} (s)
\|^2 ds
 \nonumber\\
 &
   \label{cp p4}
    +
    2 \eps
    \int_0^t
    \|    
    \sigma(s, u_{v} (s)  )
    \|
    _{\call_2(\ell ^2, H)}^2  ds
 + 
2\sqrt{\eps} \int_0^t
\left (
u^\eps_{v}(s)  -  u_{v}(s),\
 \sigma(s, u^\eps_{v} (s)  ) 
   dW  \right ) .
\end{align}

Denote by
 \be\label{cp p5}
\zeta (t)
=
e^{ 
 -2
\int_0^t
\left (
\|\psi_4\|_{
	L^\infty (0,T; L^\infty(\R^n))}
+
L_\sigma^{\frac 12}
\| v(s)\|_{\ell^2}
+ L_\sigma  
\right ) ds
}.
\ee
 Then we have
\be\label{cp p6}
e^{-2
	(
\|\psi_4\|_{
	L^\infty (0,T; L^\infty(\R^n))}
+L_\sigma )T
- 2 
 L_\sigma^{\frac 12} T^{\frac 12} N}  
 \le
 \zeta (t) 
 \le 1 \quad   \text{for all } \ 
   t\in [0,T].
 \ee
  Applying It\^{o}'s 
  product rule
  to 
  $
  \zeta (t)
 	\|  u_{v}^\eps(t) -  u_{v}(t) \|^2$
	and using the
	differential  energy inequality
	underlying \eqref{cp p4}
	together with \eqref{cp p5},
 we obtain 
 for all $t\in [0,T]$,
 \begin{align}
 	& \zeta (t)
 	\|  u_{v}^\eps(t) -  u_{v}(t) \|^2
 	+2 
 	\int_0^t \zeta(s)
	\left (
 	\|  (-\Delta)^{\frac {\alpha}2}
 	( u^\eps_{v}(s)  -  u_{v} (s)
 	) \|^2 
 	+   \lambda_2 
 	\|  u^\eps_{v}(s)  -  u_{v} (s)
 	\|^p_{L^p(\R^n)} 
	\right )ds
   	\nonumber\\
 	&
 	\le 
 	\label{cp p7}
 	2 \eps
 	\int_0^t \zeta (s) 
 	\|    
 	\sigma(s, u_{v} (s)  )
 	\|
 	_{\call_2(\ell ^2, H)}^2  ds
 +
 2\sqrt{\eps} 
  \int_0^t
  \zeta (s) 
 \left (
 u^\eps_{v}(s)  -  u_{v}(s),\
 \sigma(s, u^\eps_{v} (s)  ) 
 dW  \right ).
  \end{align}
 Taking the expectation
 of \eqref{cp p7}
 (using a stopping time if necessary),
 by \eqref{cp p6}  we obtain 
 for all $t\in [0,T]$,
\begin{align}
&
\E \left (
 \|  u_{v}^\eps(t) -  u_{v}(t) \|^2
+2 
\int_0^t
\|  (-\Delta)^{\frac {\alpha}2}
( u^\eps_{v}(s)  -  u_{v} (s)
 ) \|^2  ds
 +  2 \lambda_2 \int_0^t
  \|  u^\eps_{v}(s)  -  u_{v} (s)
  \|^p_{L^p(\R^n)} ds
  \right )
 \nonumber\\
  &
 \le 
    \label{cp p10}
   c_1 \eps 
  \E \left ( 
  \int_0^T
   \|    
  \sigma(s, u_{v} (s)  )
  \|
  _{\call_2(\ell ^2, H)}^2  ds
   \right ),
\end{align}
where $c_1>0$ is a constant depending 
only on $T$ and $N$, but not on
$\eps$.

Next, we deal with the right-hand side
of
\eqref{cp p10}.
By \eqref{sig1}-\eqref{sig2},
\eqref{sig4} 
and  \eqref{cos1 1}   we have
for almost all $s\in (0,T)$,
$$
\|    
\sigma(s, u_{v} (s)  )
\|
_{\call_2(\ell ^2, H)}^2
\le
2
\|    
\sigma(s, u_{v} (s, u_0)  )
-
\sigma(s, u_{v} (s, 0)  )
\|
_{\call_2(\ell ^2, H)}^2
+
2\|
\sigma(s, u_{v} (s, 0 )  )
\|
_{\call_2(\ell ^2, H)}^2
$$
$$
\le 
2\sum_{k=1}^\infty \alpha_k
\int_{\R^n} \kappa ^2(x)
|u_{v} (s, u_0 ,x)
-u_{v} (s, 0, x) |
 ^2 dx
 + 4 \| \sigma_1 (s)\|^2
 _{L^2(\R^n, \ell^2)}
 +4L_\sigma\| u_{v} (s, 0 )\|^2
$$
$$
\le 
2\sum_{k=1}^\infty \alpha_k
 \| \kappa \|^2
\| u_{v} (s, u_0 )
-u_{v} (s, 0) \|^2_{L^\infty (\R^n)}
 + 4 \| \sigma_1 (s)\|^2
_{L^2(\R^n, \ell^2)}
+4L_\sigma\| u_{v} (s, 0 )\|^2
$$
\be\label{cp p11}
\le c_2 \| u_{v} (s, u_0 )
-u_{v} (s, 0) \|^2_{L^\infty (\R^n)}
+ c_2.
\ee
where $c_2>0$ is
a constant depending only on
$T$ and $N$, but not on $\eps$.
By   
 \eqref{cos2 1}
 and
 \eqref{cp p11}
   we infer that
for $p>4$ and  $t\in [0,T]$,
\be\label{cp p12}
\E \left (
\int_0^T
\|    
\sigma(s, u_{v} (s)  )
\|
_{\call_2(\ell ^2, H)}^2
ds
\right )
\le c_2 
\left (
T + c_3
\int_0^T
s^{-\frac 2{p-2}} ds
\right )
=
   c_2
   \left (T+   {\frac {(p-2)c_3}{p-4}}
T^{\frac {p-4}{p-2}}
\right ),
\ee 
where $c_3>0$ is a constant depending 
only on $T$ and $N$, but not on
$\eps$.
  It follows from \eqref{cp p10}-\eqref{cp p12}   that
  for 
   $t\in [0,T]$,
 \begin{align}\label{cp p13}
 	&
 	\E \left (
 	\|  u_{v}^\eps(t) -  u_{v}(t) \|^2
 	+2 
 	\int_0^t
 	\|  (-\Delta)^{\frac {\alpha}2}
 	( u^\eps_{v}(s)  -  u_{v} (s)
 	) \|^2  ds
 	+  2 \lambda_2 \int_0^t
 	\|  u^\eps_{v}(s)  -  u_{v} (s)
 	\|^p_{L^p(\R^n)} ds
 	\right )
 	\nonumber\\
 	&
    \le  c_1 c_2
 	\eps  \left (T+   {\frac {(p-2)c_3}{p-4}}
 	T^{\frac {p-4}{p-2}}
 	\right ).
 \end{align}
By \eqref{cp p3a}
and
 \eqref{cp p12}-\eqref{cp p13}
  we obtain 
  for all $\eps\in (0,1)$,
  $$
  \E \left (
   \int_0^T
 \|   \sigma(s, u^\eps_{v} (s)  )\|
 _{\call_2(\ell ^2, H)}^2  ds
 \right ) 
 $$ 
$$
 \le
 2  L_\sigma T
 \sup_{s\in [0,T]}
 \E \left (
  \|   u^\eps_{v} (s)   
 -   u_{v} (s)   
 \| ^2 
 \right )  
 +
 2 
 \int_0^T
 \E \left (
 \|    
 \sigma(s, u_{v} (s)  )
 \|
 _{\call_2(\ell ^2, H)}^2
 \right )   ds
 $$
  \be\label{cp p15}
 \le
 2 ( 
 L_\sigma Tc_1 +1)
   c_2
    \left (T+   {\frac {(p-2)c_3}{p-4}}
 T^{\frac {p-4}{p-2}}
 \right ).
 \ee 
 
 For the last term in \eqref{cp p7},
 by the BDG inequality
  we have
$$
 2\sqrt{\eps}
 \E \left (
 \sup_{t\in [0,T]}
 \left |
 \int_0^t
 \zeta (s) 
 \left (
 u^\eps_{v}(s)  -  u_{v}(s),\
 \sigma(s, u^\eps_{v} (s)  ) 
 dW  \right )\right | 
 \right )
 $$ 
 $$
 \le
   c_4 \sqrt{\eps}
 \E \left (  
 \left (
 \int_0^T \zeta^2(s)  \|
 u^\eps_{v}(s)  -  u_{v}(s)\|^2
 \| \sigma(s, u^\eps_{v} (s) )
 \|_{\mathcal{L}_2 (\ell^2, H)}^2
 ds  \right )^{\frac 12}
 \right )
 $$ 
\be\label{cp p16} 
 \le
 {\frac 12} \E \left (
  \sup_{s\in [0,T] } \zeta (s)
\|
 u^\eps_{v}(s)  -  u_{v}(s)\|^2
 \right )
 +{\frac 12} c_4^2 \eps
  \E \left (   
 \int_0^T  
 \| \sigma(s, u^\eps_{v} (s) )
 \|_{\mathcal{L}_2 (\ell^2, H)}^2
 ds  
 \right ),
 \ee
 where $c_4>0$ is a constant
 depending
 only on $T$ and $N$, 
 but not on
 $\eps$.
It follows from  \eqref{cp p7}, \eqref{cp p12}
 and   \eqref{cp p15}-\eqref{cp p16}
that 
 \be\label{cp p17} 
   \E \left (
 \sup_{s\in [0,T] }
 \|
 u^\eps_{v}(s)  -  u_{v}(s)\|^2
 \right )
 \le c_5 \eps,
 \ee
 where  
   $c_5>0$ is a constant
  depending
  only on $T$ and $N$, 
  but not on
  $\eps$.
Then by \eqref{cp p13}
and \eqref{cp p17}  we obtain
for all $u_0 \in H$,
$v\in \mathcal{A}_N $
and $\eps \in (0,1)$,
 \be\label{cp p18}
\E \left (
\sup_{t\in [0,T]}
\|  u_{v}^\eps(t) -  u_{v}(t) \|^2
+ 
\int_0^T
\|   
  u^\eps_{v}(s)  -  u_{v} (s)
 \|^2_V   ds
+   \int_0^T
\|  u^\eps_{v}(s)  -  u_{v} (s)
\|^p_{L^p(\R^n)} ds
\right )
\le c_6 \eps,
\ee 
where  
 $c_6>0$ is a constant
depending
only on $T$ and $N$, 
but not on
$\eps$.
By \eqref{cp p18}
and Chebyshev's inequality,
we infer that for every 
  $u_0 \in H$,
$v\in \mathcal{A}_N $,
 $\eps \in (0,1)$
     and
$\eta>0$,  
$$ 
P
\left ( 
\|
 u^\eps_{v}  -  u_{v} 
 \|   
_{
	C([0,T], H) \bigcap L^2(0,T; V)
	\bigcap L^p(0,T;
	L^p(\R^n) )
} >\eta 
\right )
$$  
$$
\le   
P
  ( 
\|
u^\eps_{v}  -  u_{v} 
\|   
_{
	C([0,T], H)  } >{\frac 13} \eta 
  )
  +  
   P
  ( 
  \|
  u^\eps_{v}  -  u_{v} 
  \|   
  _{
  	L^2(0,T; V)  } >{\frac 13} \eta 
  ) 
   +
   P
  ( 
  \|
  u^\eps_{v}  -  u_{v} 
  \|   
  _{
  	L^p(0,T; L^p(\R^n))  }
   >{\frac 13} \eta 
  ) 
$$  
$$
\le c_6\eps 
\left (
{\frac {18}{\eta^2}}
+ 
{\frac {3^p}{\eta^p}}
\right ),
$$
and thus for every $N\in\N$ and $\eta>0$, 
 $$
 \sup_{u_0\in H}
 \sup_{v\in 
 	\mathcal{A}_N}
 \left (
P
\left ( 
\|
u^\eps_{v}  -  u_{v} 
\|   
_{
	C([0,T], H) \bigcap L^2(0,T; V)
	\bigcap L^p(0,T;
	L^p(\R^n) )
} >\eta 
\right )
\right ) 
   \le c_6\eps 
\left (
{\frac {18}{\eta^2}}
+ 
{\frac {3^p}{\eta^p}}
\right ) \to 0,
$$  
as $\eps \to 0$,
 which completes the proof. 
 \end{proof}

We are now  in a position 
to complete the proof of
Theorem \ref{main} on the uniform LDP of solutions
of \eqref{intr1}-\eqref{intr2}
over all initial data in $H$.

\begin{proof}[Proof of Theorem \ref{main}]
Note that
\eqref{convp} follows from  Lemma \ref{cp},
and 
 \eqref{convp1} follows from
  \eqref{comg0}, 
  which along with  
  Proposition \ref{fwuldp} 
  yields 
   Theorem \ref{main}.
   \end{proof}

\section{Large deviations of invariant measures}
\setcounter{equation}{0}

In this section, we consider
the large deviations of invariant measures
of  the autonomous version of
 \eqref{intr1}-\eqref{intr2}:
\begin{equation}\label{aueq}
\begin{cases}
 du^\varepsilon(t)
 +\bigl(( -\Delta)^\alpha u^\varepsilon(t)+\lambda u^\varepsilon(t)
       +F(\cdot,u^\varepsilon(t))\bigr)\,dt
 =g\,dt+\sqrt\varepsilon\,\sigma(u^\varepsilon(t))\,dW(t),\\
 u^\varepsilon(0)=u_0,
\end{cases}
\end{equation}
where $g\in H$,  $F$ and $\sigma$ 
satisfy  the
 autonomous
versions of 
\eqref{f1}-\eqref{f3} and \eqref{sig1}-\eqref{sig2},
respectively.
In particular,  we now  assume
$
 \psi_1\in L^1(\mathbb R^n)$,
 $
 \psi_2,\psi_4\in L^\infty(\mathbb R^n)$
  $ \psi_3\in L^{p/(p-1)}(\mathbb R^n)$
  and
$\sigma_1\in L^2(\mathbb R^n;l^2)$.
In  addition,  in this section, we   assume  the
 strict dissipativity condition:
\begin{equation}\label{diss1}
 a:=\lambda-\|\psi_4\|_{L^\infty(\mathbb R^n)}>0.
\end{equation}
Condition \eqref{diss1} will be used 
to show 
the deterministic dynamics has a single globally attracting equilibrium.
No boundedness or nondegeneracy of the multiplicative noise is assumed.

\subsection{Energy estimates and the deterministic equilibrium}
In this subsection, we derive the 
uniform estimates of solutions
of the deterministic as well as the stochastic
equations  under
the strict dissipativeness condition
\eqref{diss1}.

\begin{lem}\label{lem5.1}
Suppose \eqref{f1}-\eqref{sig2}
 and \eqref{diss1} are fulfilled.
 If 
 $v\in L^2(0,T;\ell^2)$ and $u_{0,1},u_{0,2}\in H$,
 then
  the solutions
 $u_v(\cdot, u_{0,1})$
 and
 $u_v(\cdot, u_{0,2})$
  of \eqref{contr1}-\eqref{contr2}
  satisfy, for all $t\in [0,T]$,
\begin{equation}\label{lem5.1 1}
 \|u_v(t,u_{0,1})-u_v(t,u_{0,2})\|
 \le \exp\left(-at+\sqrt{L_\sigma}
                    \int_0^t\|v(r)\|_{\ell^2}\,dr\right)
        \|u_{0,1}-u_{0,2}\|.
\end{equation}
Consequently, if $\frac12\int_0^T\|v(r)\|_{\ell^2}^2\,dr\le s$, then
\begin{equation}\label{lem5.1 2}
 \|u_v(t,u_{0,1})-u_v(t,u_{0,2})\|
 \le\theta_s(t)\|u_{0,1}-u_{0,2}\|,
 \qquad
 \theta_s(t) = e^{
 -\frac a2t+\frac{L_\sigma}{a}s }.
\end{equation}
Moreover, the
 deterministic semigroup $S(t)u_0$
of \eqref{contr1}-\eqref{contr2}
with  zero control  has a unique equilibrium
$\bar u\in V\cap L^p(\mathbb R^n)$ which satisfies
\begin{equation}\label{lem5.1 3}
 \|S(t)u_0-\bar u\|\le e^{-at}\|u_0-\bar u\|,
 \qquad t\ge0,\quad u_0\in H.
\end{equation}
\end{lem}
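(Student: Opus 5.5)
The plan is to redo the difference energy estimate from the proof of Lemma \ref{cos1}(ii), now keeping the damping term instead of discarding it. Set $w=u_v(\cdot,u_{0,1})-u_v(\cdot,u_{0,2})$. As in \eqref{cos1 p3}, and using the autonomous forms of \eqref{f3} and \eqref{sig3}, we drop the nonnegative terms $2\|(-\Delta)^{\alpha/2}w\|^2$ and $2\lambda_2\|w\|_{L^p}^p$. This leaves
$$
\frac{d}{dt}\|w(t)\|^2\le -2\bigl(\lambda-\|\psi_4\|_{L^\infty(\R^n)}\bigr)\|w(t)\|^2+2\sqrt{L_\sigma}\,\|v(t)\|_{\ell^2}\|w(t)\|^2 .
$$
The left side makes sense in $W^{1,1}(0,T)$ by the standard Lions–Magenes energy equality, which is already used implicitly in Lemma \ref{cos1}. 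Since $a=\lambda-\|\psi_4\|_{L^\infty}$, Gronwall's inequality gives $\|w(t)\|^2\le \exp\bigl(-2at+2\sqrt{L_\sigma}\int_0^t\|v\|_{\ell^2}\bigr)\|w(0)\|^2$. Taking square roots yields \eqref{lem5.1 1}.

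For \eqref{lem5.1 2}, apply Young's inequality pointwise in time: $\sqrt{L_\sigma}\|v(r)\|_{\ell^2}\le \frac a2+\frac{L_\sigma}{2a}\|v(r)\|_{\ell^2}^2$. Integrating over $[0,t]$ and using $\frac12\int_0^t\|v\|^2\le s$ gives
$$
\sqrt{L_\sigma}\int_0^t\|v\|_{\ell^2}\le \frac a2 t+\frac{L_\sigma}{a}s .
$$
Substituting into \eqref{lem5.1 1} gives exactly $\theta_s(t)$.

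For the equilibrium, take $v=0$. The estimate \eqref{lem5.1 1} then says $\|S(t)u_{0,1}-S(t)u_{0,2}\|\le e^{-at}\|u_{0,1}-u_{0,2}\|$, and $S$ is a semigroup on $H$ by autonomy and uniqueness. Fix $t_0>0$. Then $S(t_0)$ is a strict contraction on the complete space $H$, so it has a unique fixed point $\bar u$. For any $t\ge0$, the point $S(t)\bar u$ is also a fixed point of $S(t_0)$, because $S(t_0)S(t)\bar u=S(t)S(t_0)\bar u=S(t)\bar u$. Uniqueness forces $S(t)\bar u=\bar u$ for all $t$, so $\bar u$ is an equilibrium. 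Any other equilibrium is a fixed point of $S(t_0)$ and hence equals $\bar u$. Applying the contraction with $u_{0,2}=\bar u$ gives \eqref{lem5.1 3}.

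The main obstacle is the regularity claim $\bar u\in V\cap L^p(\R^n)$. I would get it from Lemma \ref{cos1}(i) applied to the stationary trajectory $u(t)\equiv\bar u$ on $[0,1]$. That lemma gives $\int_0^1\|\bar u\|_V^2\,dt+\int_0^1\|\bar u\|_{L^p}^p\,dt<\infty$, and since the integrand is constant in time, $\|\bar u\|_V^2+\|\bar u\|_{L^p}^p<\infty$. It then also follows that $\bar u$ solves the stationary equation in $(V\cap L^p)^*$.
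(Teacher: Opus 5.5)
Your proposal is correct and follows the paper's proof essentially step by step. The paper likewise applies Gronwall to \eqref{cos1 p3} with the damping term retained, uses Young's inequality to get $\theta_s$, and applies the Banach fixed point theorem to a time-$t_0$ map (it takes $t_0=1$) together with the commutation $S(t_0)S(t)=S(t)S(t_0)$. Your extra argument for $\bar u\in V\cap L^p(\R^n)$, via the constant trajectory $S(t)\bar u\equiv\bar u$ lying in $L^2(0,1;V)\cap L^p(0,1;L^p(\R^n))$, supplies a detail the paper asserts but does not verify.
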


\begin{proof}
The inequality \eqref{lem5.1 1} follows from
\eqref{cos1 p3}, \eqref{diss1} and Gronwall's inequality
immediately.
Note that
$$
 \sqrt{L_\sigma}\|v(r)\|_{l^2}
 \le\frac a2+\frac{L_\sigma}{2a}\|v(r)\|_{\ell^2}^2,
$$
which along with
\eqref{lem5.1 1} gives
\eqref{lem5.1 2}, where 
$\theta_s(t)\to 0$ as $t\to\infty$ for every fixed  $s \ge 0$.

If $v=0$, then by \eqref{cos1 p3} we find that the solutions
of the controlled equation satisfy
\be\label{lem5.1 p1}
 \|S(t)u_{0,1}-S(t)u_{0,2}\|
 \le e^{-at}\|u_{0,1}-u_{0,2}\|.
\ee
In particular, $S(1)$ is a strict contraction on $H$,
and  hence it has a unique 
 fixed point $\bar u$ in $H$.
Since
$S(1)S(t)\bar u=S(t)S(1)\bar u=S(t)\bar u$, 
the uniqueness of fixed points of  $S(1)$  implies
$S(t)\bar u=\bar u$ for every $t\ge0$,
and thus $\bar u$ is the unique  
equilibrium of the deterministic equation.
The contraction
\eqref{lem5.1 p1}  with $u_{0,2}=\bar u$ proves
\eqref{lem5.1 3}.
\end{proof}

From now on, we set
\begin{equation}\label{diss2}
 b:=\|\sigma(\bar u)\|_{L_2(\ell^2,H)}.
\end{equation}
Then by \eqref{sig3} and \eqref{diss2} we have
\begin{equation}\label{diss3}
 \|\sigma(u)\|_{L_2(\ell^2,H)}
 \le b+\sqrt{L_\sigma}\|u-\bar u\|,\qquad u\in H.
\end{equation}

\
\begin{lem} \label{lem5.2}
Suppose \eqref{f1}-\eqref{sig2}
and \eqref{diss1} are fulfilled.
For every $s\ge0$, define
\begin{equation}\label{Ms}
 M_s:=\sqrt{
 \frac{4b^2s}{a}}
                    e^{\frac{2L_\sigma s}{a}}.
\end{equation}
Then, for every $T>0$ and every $v\in L^2(0,T;\ell^2)$ with
$\frac12\int_0^T\|v(r)\|_{\ell^2}^2\,dr\le s$,
the solution $u_v(\cdot, \bar u)$ of 
\eqref{contr1}-\eqref{contr2} with initial value $\bar u$
satisfies
\begin{equation}\label{lem5.2 1}
 \sup_{0\le t\le T}\|u_v(t,\bar u)-\bar u\|\le M_s,
\end{equation}
and, for every $u_0\in H$,
\begin{equation}\label{lem5.2 2}
 \|u_v(t,u_0)-\bar u\|
 \le\theta_s(t)\|u_0-\bar u\|+M_s,\qquad 0\le t\le T.
\end{equation}
In particular, $M_s\to0$ as $s \to 0$.
\end{lem}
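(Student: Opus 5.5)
The plan is a direct energy estimate for the deviation $w(t)=u_v(t,\bar u)-\bar u$. After that, \eqref{lem5.2 2} follows from Lemma \ref{lem5.1} and the triangle inequality. By Lemma \ref{lem5.1}, $\bar u\in V\cap L^p(\mathbb R^n)$ is an equilibrium of \eqref{contr1} with $v=0$, so $(-\Delta)^\alpha\bar u+\lambda\bar u+F(\cdot,\bar u)=g$ in $(V\cap L^p(\mathbb R^n))^*$. Subtracting this from \eqref{contr1} with initial datum $\bar u$ gives
$$
\frac{dw}{dt}+(-\Delta)^\alpha w+\lambda w+\bigl(F(\cdot,u_v)-F(\cdot,\bar u)\bigr)=\sigma(u_v)v,\qquad w(0)=0 .
$$

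Testing with $w$ (legitimate in the same Lions--Magenes sense as in the proof of Lemma \ref{cos1}) gives three contributions. The fractional term is nonnegative. By \eqref{f3}, the reaction difference is bounded below by $-\|\psi_4\|_{L^\infty}\|w\|^2$ after dropping the $\lambda_2$ term. For the noise term, \eqref{diss3} yields $\|\sigma(u_v)v\|\le (b+\sqrt{L_\sigma}\|w\|)\|v\|_{\ell^2}$. Together these give
$$
\frac{d}{dt}\|w\|^2\le -2a\|w\|^2+2\sqrt{L_\sigma}\|v\|_{\ell^2}\|w\|^2+2b\|v\|_{\ell^2}\|w\| .
$$
To divide by $\|w\|$ without trouble where $w$ vanishes (in particular at $t=0$), I would work with $y_\delta=(\|w\|^2+\delta)^{1/2}$. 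Its derivative satisfies $y_\delta'\le(-a+\sqrt{L_\sigma}\|v\|_{\ell^2})\|w\|+b\|v\|_{\ell^2}$, up to an error term that vanishes as $\delta\to0$; this regularization is the only delicate technical point. Gronwall and then $\delta\to0$ yield
$$
\|w(t)\|\le b\int_0^t \exp\Bigl(-a(t-r)+\sqrt{L_\sigma}\int_r^t\|v\|_{\ell^2}\Bigr)\|v(r)\|_{\ell^2}\,dr .
$$

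Next I use the same Young inequality as in Lemma \ref{lem5.1}, namely $\sqrt{L_\sigma}\|v\|_{\ell^2}\le\frac a2+\frac{L_\sigma}{2a}\|v\|_{\ell^2}^2$. This bounds the exponent by $-\frac a2(t-r)+\frac{L_\sigma}{a}s$. Cauchy--Schwarz then gives
$$
\|w(t)\|\le b\,e^{L_\sigma s/a}\Bigl(\int_0^t e^{-a(t-r)}dr\Bigr)^{1/2}\Bigl(\int_0^T\|v\|_{\ell^2}^2\Bigr)^{1/2}\le b\,e^{L_\sigma s/a}\sqrt{2s/a}\le M_s ,
$$
uniformly in $t\le T$ and in $T$. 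This proves \eqref{lem5.2 1}, and $M_s\to0$ as $s\to0$ is clear from \eqref{Ms}.

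For \eqref{lem5.2 2}, I write
$$
\|u_v(t,u_0)-\bar u\|\le\|u_v(t,u_0)-u_v(t,\bar u)\|+\|u_v(t,\bar u)-\bar u\|
$$
and apply \eqref{lem5.1 2} with $u_{0,1}=u_0$ and $u_{0,2}=\bar u$ to the first term and \eqref{lem5.2 1} to the second.
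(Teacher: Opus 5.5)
Your proof is correct. It starts from the same energy inequality as the paper,
$\frac{d}{dt}\|w\|^2\le -2a\|w\|^2+2\sqrt{L_\sigma}\|v\|_{\ell^2}\|w\|^2+2b\|v\|_{\ell^2}\|w\|$,
and proves \eqref{lem5.2 2} by the same triangle-inequality argument; it differs only in how the estimate is closed.

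\textbf{The paper's closing step.} The paper stays with $y=\|w\|^2$ and applies Young's inequality to both right-hand terms, in particular
$2b\|v\|_{\ell^2}\sqrt y\le \frac a2 y+\frac{2b^2}{a}\|v\|_{\ell^2}^2$.
This gives the linear inequality
$y'+ay\le \frac{2L_\sigma}{a}\|v\|_{\ell^2}^2y+\frac{2b^2}{a}\|v\|_{\ell^2}^2$ with $y(0)=0$.
Gronwall then applies directly, with no division by $\|w\|$ and hence no regularization. The resulting bound $\frac{4b^2s}{a}e^{4L_\sigma s/a}$ is exactly $M_s^2$.

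\textbf{Your closing step.} You linearize in $\|w\|$ instead, which requires the $(\|w\|^2+\delta)^{1/2}$ device. That device works. Using $\|w\|^2/y_\delta=y_\delta-\delta/y_\delta$ and $0\le\delta/y_\delta\le\sqrt\delta$, one gets
$y_\delta'\le(-a+\sqrt{L_\sigma}\|v\|_{\ell^2})y_\delta+b\|v\|_{\ell^2}+a\sqrt\delta$ with $y_\delta(0)=\sqrt\delta$,
so the error term is simply $a\sqrt\delta$. After Young on the exponent and Cauchy--Schwarz, you obtain the sharper bound $b\sqrt{2s/a}\,e^{L_\sigma s/a}$. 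This improves $M_s$ by the factor $\sqrt2\,e^{L_\sigma s/a}$.

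\textbf{Comparison.} The paper's route is technically lighter, and yours gives a better constant. Either suffices for the lemma and its later uses.
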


\begin{proof}
Put $y(t)=\|u_v(t,\bar u)-\bar u\|^2$. 
Since $\bar u$  is the equilibrium   of \eqref{contr1}-\eqref{contr2}
with $v=0$, 
by \eqref{contr1} and \eqref{diss3} we have 
\be\label{lem5.2 p1}
 y'+2ay\le
 2\sqrt{L_\sigma}\|v\|_{\ell^2}y
       +2b\|v\|_{\ell^2}\sqrt y.
\ee
Note that
$$
 2\sqrt{L_\sigma}\|v\|_{\ell^2}y
 \le\frac a2y+\frac{2L_\sigma}{a}\|v\|_{\ell^2}^2y,
 \qquad
 2b\|v\|_{\ell^2}\sqrt y
 \le\frac a2y+\frac{2b^2}{a}\|v\|_{\ell^2}^2,
$$
which along with \eqref{lem5.2 p1}
implies that
\begin{equation}\label{lem5.2 p2}
 y'+ay\le\frac{2L_\sigma}{a}\|v\|_{\ell^2}^2y
                         +\frac{2b^2}{a}\|v\|_{\ell^2}^2,
 \qquad y(0)=0.
\end{equation}
By Gronwall's inequality, we get
from \eqref{lem5.2 p2} that 
\begin{align*}
 y(t)
 \le\frac{2b^2}{a}\int_0^t
   \exp\left(-a(t-r)+\frac{2L_\sigma}{a}
                         \int_r^t\|v(\tau)\|_{\ell^2}^2\,d\tau\right)
                     \|v(r)\|_{\ell^2}^2\,dr
 \le\frac{4b^2s}{a}
 e^{
 \frac{4L_\sigma s}{a}},
\end{align*}
which proves
 \eqref{lem5.2 1}. Combining it with
\eqref{lem5.1 2}  we get
\eqref{lem5.2 2}. This completes the proof.
\end{proof}

\subsection{The transition semigroup and the Markov property}

In this subsection, 
  we assume $\eps
\in (0, \eps^*)
$ where $\eps^*$ is a small positive number
such that
\begin{equation}\label{snoise}
 2\varepsilon_*L_\sigma\le a.
\end{equation}

\begin{lem}\label{lem5.3}
Suppose \eqref{f1}-\eqref{sig2},
\eqref{diss1} and \eqref{snoise} are fulfilled.
Then
for 
 every  $\xi\in
 L^2(\Omega,\mathcal F_0;H) $,
 we have
\begin{equation}\label{lem5.3 1}
 \E
 \left (
 \|u^\varepsilon(t,\xi)-\bar u\|^2
 \right )
 \le 
 e^{-at}
 \E
 \left (
 \|\xi-\bar u\|^2
 \right )
              +\frac{2\varepsilon b^2}{a}(1-e^{-at}),\qquad t\ge0.
\end{equation}
If $\xi_1,\xi_2\in L^2(\Omega,\mathcal F_0;H)$, then
\begin{equation}\label{lem5.3 2}
 \E
 \left (
 \|u^\varepsilon(t,\xi_1)-u^\varepsilon(t,\xi_2)\|^2
 \right )
 \le e^{-at}
 \E
 \left (
 \|\xi_1-\xi_2\|^2\right ).
\end{equation}
\end{lem}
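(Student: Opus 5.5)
We apply It\^o's formula to $\|u^\varepsilon(t,\xi)-\bar u\|^2$ and to $\|u^\varepsilon(t,\xi_1)-u^\varepsilon(t,\xi_2)\|^2$, using the variational framework of Lemma \ref{le31}.

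For \eqref{lem5.3 1}, set $w=u^\varepsilon-\bar u$. Since $\bar u$ is a stationary solution of the deterministic equation, $w$ satisfies
$$
dw+\bigl((-\Delta)^\alpha w+\lambda w+F(\cdot,u^\varepsilon)-F(\cdot,\bar u)\bigr)dt=\sqrt\varepsilon\,\sigma(u^\varepsilon)\,dW
$$
in $(V\cap L^p(\R^n))^*$. It\^o's formula gives
$$
d\|w\|^2=\Bigl(-2\|(-\Delta)^{\frac\alpha2}w\|^2-2\lambda\|w\|^2-2\int_{\R^n}(F(\cdot,u^\varepsilon)-F(\cdot,\bar u))w\,dx+\varepsilon\|\sigma(u^\varepsilon)\|^2_{\call_2(\ell^2,H)}\Bigr)dt+dM_t .
$$
Here $M_t$ is a local martingale. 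We drop the nonnegative fractional term and use \eqref{f3} to bound the $F$-term below by $-\|\psi_4\|_{L^\infty}\|w\|^2$, dropping the $\lambda_2|w|^p$ part. By \eqref{diss3},
$$
\|\sigma(u)\|^2_{\call_2(\ell^2,H)}\le 2b^2+2L_\sigma\|w\|^2 .
$$
Together with \eqref{snoise}, which gives $2\varepsilon L_\sigma\le a$, the drift is bounded by $(-2a+2\varepsilon L_\sigma)\|w\|^2+2\varepsilon b^2\le -a\|w\|^2+2\varepsilon b^2$.

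Next we localize with stopping times $\tau_R=\inf\{t:\|w(t)\|>R\}$ so that $M_{t\wedge\tau_R}$ is a true martingale. Its integrand $\|w\|\,\|\sigma(u^\varepsilon)\|$ is square integrable by Lemma \ref{le31} anyway. Applying the product rule to $e^{at}\|w(t)\|^2$ and taking expectations yields
$$
e^{at}\E\|w(t)\|^2\le \E\|\xi-\bar u\|^2+2\varepsilon b^2\int_0^t e^{ar}\,dr .
$$
Letting $R\to\infty$ is justified by Fatou's lemma, or directly since $\E\sup_{[0,T]}\|w\|^2<\infty$. Multiplying by $e^{-at}$ gives exactly \eqref{lem5.3 1}, because $2\varepsilon b^2(e^{at}-1)/a\cdot e^{-at}=\frac{2\varepsilon b^2}{a}(1-e^{-at})$. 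The bound holds on every $[0,T]$, hence for all $t\ge0$.

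For \eqref{lem5.3 2} the argument is the same with $w=u^\varepsilon(\cdot,\xi_1)-u^\varepsilon(\cdot,\xi_2)$. The $F$-term is handled by \eqref{f3}, and the It\^o correction is $\varepsilon\|\sigma(u_1)-\sigma(u_2)\|^2_{\call_2(\ell^2,H)}\le\varepsilon L_\sigma\|w\|^2$ by \eqref{sig3}. The drift is then at most $(-2a+\varepsilon L_\sigma)\|w\|^2\le -a\|w\|^2$ by \eqref{snoise}. Localizing and using $e^{at}$ as before gives \eqref{lem5.3 2}.

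The only delicate point is rigor rather than computation. It\^o's formula must be justified for the squared $H$-norm in the Gelfand triple $V\cap L^p\subset H\subset(V\cap L^p)^*$ with the $L^q$-valued nonlinear drift; the standard Krylov--Rozovskii / Pardoux version applies since $F(\cdot,u^\varepsilon)\in L^q(\Omega\times(0,T)\times\R^n)$. The second point is the martingale localization, which also covers random $\mathcal F_0$-measurable initial data $\xi\in L^2(\Omega,\mathcal F_0;H)$.
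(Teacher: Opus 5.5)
Your proposal is correct and follows essentially the same route as the paper: It\^o's formula for $\|u^\varepsilon-\bar u\|^2$ and for the difference of two solutions. There, \eqref{f3}, \eqref{diss3} (resp.\ \eqref{sig3}) and \eqref{snoise} give the drift bounds $-a\|w\|^2+2\varepsilon b^2$ (resp.\ $-a\|w\|^2$), followed by Gronwall, which your stopping-time localization and $e^{at}$ weighting simply make explicit. One small side remark: Lemma \ref{le31} gives only second moments, so $\|w\|\,\|\sigma(u^\varepsilon)\|$ is not obviously square integrable (that would need fourth moments), though the martingale property still follows from the BDG inequality with exponent one; since you localize with $\tau_R$ anyway, nothing in your argument depends on this.
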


\begin{proof}
Let $z=u^\varepsilon(\cdot,\xi)-\bar u$. 
By  It\^o formula, \eqref{diss3} and \eqref{snoise} 
we get 
\begin{align*}
 d\|z\|^2
 &\le-2a\|z\|^2\,dt
    +\varepsilon\|\sigma(u^\varepsilon)\|_{L_2(\ell^2,H)}^2\,dt
    +2\sqrt\varepsilon\,(z,\sigma(u^\varepsilon)\,dW)\\
 &\le-(2a-2\varepsilon L_\sigma)\|z\|^2\,dt
    +2\varepsilon b^2\,dt
    +2\sqrt\varepsilon\,(z,\sigma(u^\varepsilon)\,dW)\\
    &\le -a \|z\|^2\,dt
    +2\varepsilon b^2\,dt
    +2\sqrt\varepsilon\,(z,\sigma(u^\varepsilon)\,dW),
    \end{align*}
    which along with Gronwall's lemma implies that
$$
 \E
 \left (\|z(t)\|^2 \right )
 \le  e^{-at}
\ E
\left (\|z(0)\|^2\right )+
 2\varepsilon b^2a^{-1}
 (1-e^{-at}) ,
$$
and thus 
 \eqref{lem5.3 1}
 is valid.

For the difference $w=u^\varepsilon(\cdot,\xi_1)-u^\varepsilon(\cdot,\xi_2)$,
by
the same calculation and \eqref{sig3},  we get
$$
 d\|w\|^2\le- a \|w\|^2\,dt
 +2\sqrt\varepsilon
       (w,(\sigma(u^\varepsilon(\cdot,\xi_1))
                  -\sigma(u^\varepsilon(\cdot,\xi_2)))\,dW),
$$
which together with Gronwall's inequality implies \eqref{lem5.3 2}.
\end{proof}

Let 
 $\mathcal B (H)$
 be the  Borel sigma-algebra
 of $H$ and $B_b(H)$ be the space of
 bounded Borel functions on $H$.
For $A\in\mathcal B(H)$ and $f\in B_b(H)$, define
$$
 P_t^\varepsilon(u_0,A):=P(u^\varepsilon(t,u_0)\in A),\qquad
 P_t^\varepsilon f(u_0):=\int_H f(z)P_t^\varepsilon(u_0,dz).
$$
Then by Lemma \ref{lem5.3}
and the standard argument,
one can verify that
the 
 operators $\{P_t^\varepsilon\}_{t\ge0}$ form a time-homogeneous Feller
Markov semigroup on $H$. 
More generally, if $h:H\times H\to\mathbb R$ is bounded and Borel
and $\xi\in L^2(\Omega, \calf_0; H)$,
then 
for all $\tau, t \ge 0$,
\begin{equation}\label{markov1}
 \begin{split}
 &\E\bigl[h(u^\varepsilon(\tau, \xi),u^\varepsilon(\tau+t, \xi))
                                     \mid\mathcal F_\tau\bigr]\\
 &\qquad=\int_H h(u^\varepsilon(\tau, \xi),z)
                      P_t^\varepsilon(u^\varepsilon(\tau, \xi),dz)
       \quad P\hbox{-almost surely}.
 \end{split}
\end{equation}

\subsection{Existence, uniqueness and concentration of invariant measures}

\begin{lem}\label{lem5.4}
Suppose \eqref{f1}-\eqref{sig2},
\eqref{diss1} and \eqref{snoise} are fulfilled.
Then for  each $0<\varepsilon < \varepsilon_*$, the semigroup
$\{P_t^\varepsilon\}_{t\ge 0}
$ has a unique invariant probability measure
$\mu^\varepsilon$ on $H$. Moreover,
\begin{equation}\label{leem5.4 1}
 \int_H\|u-\bar u\|^2\,\mu^\varepsilon(du)
          \le\frac{2\varepsilon b^2}{a},
\end{equation}
and
\begin{equation}\label{lem5.4 2}
 \mu^\varepsilon(H\setminus B_H(\bar u,r))
       \le\frac{2\varepsilon b^2}{ar^2},\qquad r>0.
\end{equation}
In particular, $\mu^\varepsilon \to \delta_{\bar u}$ 
weakly as
$\eps \to 0$.
\end{lem}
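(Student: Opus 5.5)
The plan is to obtain the invariant measure as the limit of the laws of $u^\varepsilon(t,\bar u)$, using the contraction \eqref{lem5.3 2} in the $L^2$-Wasserstein distance $\mathcal W_2$ on probability measures on $H$ with finite second moment. This avoids any tightness argument, which is important because Sobolev embeddings are not compact on $\R^n$.

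\textbf{Existence.} Let $\nu_t$ be the law of $u^\varepsilon(t,\bar u)$. Fix $t,h\ge0$ and put $\xi=u^\varepsilon(h,\bar u)$. By uniqueness of solutions and the Markov property \eqref{markov1}, $u^\varepsilon(t+h,\bar u)$ has the same law as the solution at time $t$ started from an $\mathcal F_0$-measurable copy of $\xi$. Using this copy together with the deterministic initial value $\bar u$ as a coupling, \eqref{lem5.3 2} and \eqref{lem5.3 1} give
$$
\mathcal W_2(\nu_{t+h},\nu_t)^2\le e^{-at}\,\E\|\xi-\bar u\|^2\le e^{-at}\frac{2\varepsilon b^2}{a}.
$$
Hence $\{\nu_t\}$ is Cauchy in the complete space $(\mathcal P_2(H),\mathcal W_2)$ as $t\to\infty$, and converges to some $\mu^\varepsilon$.

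\textbf{Invariance.} Applying \eqref{lem5.3 2} to arbitrary couplings shows that $P_s^{\varepsilon*}$ is Lipschitz in $\mathcal W_2$ with constant $e^{-as/2}$. It follows that
$$
P_s^{\varepsilon*}\mu^\varepsilon=\lim_{t\to\infty}P_s^{\varepsilon*}\nu_t=\lim_{t\to\infty}\nu_{t+s}=\mu^\varepsilon .
$$
Alternatively, one can use the Feller property and weak convergence.

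\textbf{Uniqueness.} The plan is to first reduce to invariant measures of finite second moment, and then compare them by contraction.
\begin{itemize}
\item Let $\mu$ be any invariant probability measure. For $R>0$ and $r>0$, invariance gives
$$
\int_H \min(\|u-\bar u\|^2,R)\,\mu(du)=\int_H \E\min(\|u^\varepsilon(t,u_0)-\bar u\|^2,R)\,\mu(du_0).
$$
\item Choose $r$ with $\mu(B_H(\bar u,r))>0$. On this ball, \eqref{lem5.3 1} gives $\E\|u^\varepsilon(t,u_0)-\bar u\|^2\le e^{-at}r^2+2\varepsilon b^2/a$.
\item Outside the ball, bound the integrand by $R$. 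Letting $t\to\infty$, then $r\to\infty$, then $R\to\infty$ shows that $\mu$ has finite second moment, and also yields \eqref{leem5.4 1}.
\item For two invariant measures $\mu_1,\mu_2$ with finite second moment, \eqref{lem5.3 2} gives $\mathcal W_2(\mu_1,\mu_2)\le e^{-at/2}\mathcal W_2(\mu_1,\mu_2)$ for all $t$, so $\mu_1=\mu_2$.
\end{itemize}
The main obstacle is the first step, the a priori second-moment reduction for an arbitrary invariant measure. I expect it to be resolved by truncating with $\min(\cdot,R)$ and using that $\mu$ is a probability measure, so that $\mu(H\setminus B_H(\bar u,r))\to0$ as $r\to\infty$. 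An alternative route to uniqueness is to test bounded Lipschitz $f$: $|P_t^\varepsilon f(x)-P_t^\varepsilon f(y)|\le \mathrm{Lip}(f)e^{-at/2}\|x-y\|$, integrated against $\mu_1\otimes\mu_2$ with dominated convergence. This needs no moment assumption on the $\mu_i$, and I would use it if the truncation argument becomes delicate.

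\textbf{Remaining claims.} The bound \eqref{leem5.4 1} follows directly from the truncation identity above, or alternatively by lower semicontinuity of $\mathcal W_2$ and \eqref{lem5.3 1} with $\xi=\bar u$. Then \eqref{lem5.4 2} is Chebyshev's inequality applied to \eqref{leem5.4 1}. Finally, \eqref{lem5.4 2} shows that $\mu^\varepsilon(H\setminus B_H(\bar u,r))\to0$ as $\varepsilon\to0$ for every $r>0$, which gives $\mu^\varepsilon\to\delta_{\bar u}$ weakly.
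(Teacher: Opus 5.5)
Your proof is correct, but it differs from the paper's at the existence step. The paper does not construct $\mu^\eps$ in this lemma. It defers existence to an earlier work; on $\R^n$ such arguments typically rest on Krylov--Bogolyubov plus tightness from uniform tail estimates. It then proves \eqref{leem5.4 1} for an arbitrary invariant measure using Jensen's inequality for the concave truncation $\psi_R(r)=\min\{r,R\}$, namely $P^\eps_t(\psi_R\circ\xi)\le\psi_R(P^\eps_t\xi)$ with $\xi(u)=\|u-\bar u\|^2$. It integrates this against $\mu^\eps$ and applies Fatou's lemma as $R\to\infty$. Uniqueness is then said to follow from \eqref{lem5.3 2} and \eqref{leem5.4 1}, which is exactly your $\mathcal W_2$-contraction step.

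Your construction of $\mu^\eps$ as the $\mathcal W_2$-limit of the laws of $u^\eps(t,\bar u)$ needs only \eqref{lem5.3 1}, \eqref{lem5.3 2} and the Markov property. It is therefore self-contained and never meets the noncompactness of $V\hookrightarrow H$ on $\R^n$. It also gives the rate $\mathcal W_2(P^{\eps *}_t\nu,\mu^\eps)\le e^{-at/2}\mathcal W_2(\nu,\mu^\eps)$ for every initial law $\nu$ with finite second moment. The cost is that it depends on the contraction, hence on \eqref{diss1} and \eqref{snoise}, which a tightness argument would not need. Since these hypotheses are assumed here anyway, your route fits the paper's stated aim of avoiding tightness better than the cited one.

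Your ball/complement truncation (limits $t\to\infty$, then $r\to\infty$, then $R\to\infty$) is an equally valid substitute for the Jensen--Fatou argument and uses only monotone convergence. One tacit point is shared with the paper, which also takes an $\mathcal F_0$-measurable initial datum with law $\mu^\eps$ in the proof of Theorem \ref{main1}: $\mathcal F_0$ must be rich enough to carry initial data, or couplings, with prescribed laws.
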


\begin{proof}
The existence of  an invariant measure
$\mu^\eps$  of 
$\{P_t^\varepsilon\}_{t\ge 0}$
can be proved by the argument of  
\cite{wangJDE2019}.

Let $\xi (u) =\| u-\bar u\|^2 $ for 
$u\in H$. Then by  \eqref{lem5.3 1}  with 
deterministic initial data we have
$$
P^\eps _t \xi  (u)
\le e^{-at} \xi (u) 
+   \frac{2\varepsilon b^2}{a}(1-e^{-at}).
 $$
 Given $R>0$, let $\psi_R (r) =\min\{ r, R\}$
 for $r\in \R$. By the concavity and monotonicity of $\psi_R$, we
 get
$$
 P^\eps _t  (\psi_R \circ \xi) 
 \le
 \psi_R ( P^\eps_t \xi)
\le 
\psi_R
\left (
e^{-at} \xi 
+   \frac{2\varepsilon b^2}{a}(1-e^{-at})
\right ).
$$
Then by the invariance of $\mu^\eps$ we have
$$
\int_H \left ( 
 \psi_R (\xi (u))
 -
\psi_R
\left (
e^{-at} \xi (u)
+   \frac{2\varepsilon b^2}{a}(1-e^{-at})
\right )
\right ) \mu^\eps (du)\le 0.
$$
Note that  the integrand 
is bounded below
by $-  \frac{2\varepsilon b^2}{a}(1-e^{-at})
$.
 Letting $R\to \infty$,  by Fatou's lemma, we get
$$
(1-e^{-at})
\int_H 
  \xi (u)
 \mu^\eps (du)\le 
 \frac{2\varepsilon b^2}{a}(1-e^{-at}),
 $$
 and hence
 $$
\int_H 
  \|u-\bar u\|^2 
 \mu^\eps (du)\le 
 \frac{2\varepsilon b^2}{a}.
 $$
 This  gives     \eqref{leem5.4 1},
from which \eqref{lem5.4 2} follows.
Note that  the  uniqueness of 
invariant measures follows
 from \eqref{lem5.3 2}
 and \eqref{leem5.4 1}.

On the other hand, for every $r>0$ and every 
$f: H\to \R$  bounded and continuous,
by \eqref{lem5.4 2} we get 
\[
 \left|\int_Hf\,d\mu^\varepsilon-f(\bar u)\right|
 \le\sup_{\|u-\bar u\|<r}|f(u)-f(\bar u)|
       +2\|f\|_\infty\frac{2\varepsilon b^2}{ar^2}.
\]
First letting  $\varepsilon
\to 0$ and then $r \to 0$, we  see that
$\mu^\eps \to \delta_{\bar u}$ weakly. 
\end{proof}

\subsection{The action and the quasipotential}

In this subsection, we regard  the rate function
 $I_{T,u_0}$  as a function on
$C([0,T],H)$.
Recall that
for every $T>0$, $u_0 \in H$   and $s\ge 0$,
the level set  $
 I^s_{T,u_0}$
 is compact in $C([0,T],H)$, and the   the endpoint set
$ \mathcal K_{T,s}$ is compact in $H$, where
$$ \mathcal K_{T,s} =\left\{u_v(T,\bar u):v\in L^2(0,T;\ell^2),\quad
                  \frac12\int_0^T\|v(r)\|_{\ell^2}^2\,dr\le s\right\}.
$$ 
 
Define the quasipotential by approximate terminal states:
for $u\in H$,
\begin{equation}\label{rate_IM}
 \begin{split}
 J(u)&:=\lim_{\rho\to  0}
 \inf\bigl\{I_{T,\bar u}(\phi):T>0,\ \phi\in C([0,T],H),
                                  \ \|\phi(T)-u\|<\rho\bigr\}\\
 &=\lim_{\rho \to 0}\inf\left\{
       \frac12\int_0^T\|v(r)\|_{\ell^2}^2\,dr:
       T>0,\ v\in L^2(0,T;l^2),\ \|u_v(T,\bar u)-u\|<\rho
                              \right\}.
 \end{split}
\end{equation}
The limit exists in $[0,+\infty]$ because the infima increase as
$\rho\downarrow0$.
 Put
$$
 J^s:=\{u\in H:J(u)\le s\},\qquad s\ge0,
$$
and define the finite-energy reachable set over arbitrary horizons by
\begin{equation}\label{reach}
 \mathcal R_s:=\bigcup_{T>0}\mathcal K_{T,s}.
\end{equation}

Next, we show $J$ is a good rate function on $H$.

\begin{lem}\label{lem5.5}
Suppose \eqref{f1}-\eqref{sig2},
\eqref{diss1} and \eqref{snoise} are fulfilled.
Then the function $J$ is a good rate function on $H$. More precisely,
$\overline{\mathcal R_s}$ is compact in $H$ for every finite $s\ge0$, and
\begin{equation}\label{lem5.5 1}
 J^s=\bigcap_{m=1}^\infty\overline{\mathcal R_{s+1/m}}.
\end{equation}
Moreover,
\begin{equation}\label{lem5.5 2}
 J(u)=0\quad \text{if and only if} \quad u=\bar u.
\end{equation}
\end{lem}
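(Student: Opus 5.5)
The plan has three parts: compactness of $\overline{\mathcal R_s}$, the identity \eqref{lem5.5 1}, and the characterization \eqref{lem5.5 2}. Goodness of $J$ then follows, since by \eqref{lem5.5 1} each $J^s$ is an intersection of closed sets inside the compact set $\overline{\mathcal R_{s+1}}$; hence $J^s$ is compact and $J$ is lower semicontinuous.

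\textbf{Compactness of $\overline{\mathcal R_s}$.} Since $\overline{\mathcal R_s}$ is closed in the complete space $H$, it suffices to show that $\mathcal R_s$ is totally bounded. Fix $\delta>0$. By Lemma \ref{lem5.2}, every $u_v(t,\bar u)$ with action $\le s$ lies in $B_H(\bar u,M_s)$. Choose $T_0$ with $\theta_s(T_0)M_s<\delta$. Take $z=u_v(T,\bar u)\in\mathcal K_{T,s}$. If $T\le T_0$, extend $v$ by zero on the interval of length $T_0-T$ placed \emph{before} the control. Because $\bar u$ is an equilibrium, this gives the same endpoint with $T_0$ in place of $T$, so $z\in\mathcal K_{T_0,s}$. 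If $T>T_0$, set $y=u_v(T-T_0,\bar u)$, so $\|y-\bar u\|\le M_s$. Let $\tilde v=v(\cdot+T-T_0)$ on $[0,T_0]$, which still has action $\le s$. Then $z=u_{\tilde v}(T_0,y)$, while $u_{\tilde v}(T_0,\bar u)\in\mathcal K_{T_0,s}$. By \eqref{lem5.1 2} applied to $\tilde v$ on $[0,T_0]$,
$$\|z-u_{\tilde v}(T_0,\bar u)\|\le\theta_s(T_0)\|y-\bar u\|\le\theta_s(T_0)M_s<\delta.$$
Thus $\mathcal R_s$ lies in the $\delta$-neighbourhood of the compact set $\mathcal K_{T_0,s}$, and is therefore totally bounded. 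This uniform-in-horizon reduction is the main point of the proof. It uses time-autonomy, which lets us shift the control, and the $s$-dependent contraction.

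\textbf{The identity \eqref{lem5.5 1}.} First let $J(u)\le s$ and $m\ge1$. For each $\rho>0$ there are $T$ and $v$ with action $<s+1/m$ and $\|u_v(T,\bar u)-u\|<\rho$. Hence $u\in\overline{\mathcal R_{s+1/m}}$. Conversely, let $u\in\overline{\mathcal R_{s+1/m}}$ for every $m$. Then for any $\rho>0$ there are points of $\mathcal R_{s+1/m}$ within $\rho$ of $u$. So the infimum at level $\rho$ is at most $s+1/m$ for every $m$, hence at most $s$. Letting $\rho\to0$ gives $J(u)\le s$.

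\textbf{The characterization \eqref{lem5.5 2}.} Taking $v=0$ shows $J(\bar u)=0$. Conversely, $J(u)=0$ gives $u\in\bigcap_m\overline{\mathcal R_{1/m}}$. Every element of $\mathcal R_{1/m}$ lies within $M_{1/m}$ of $\bar u$ by \eqref{lem5.2 1}, so $\|u-\bar u\|\le M_{1/m}\to0$, i.e. $u=\bar u$.
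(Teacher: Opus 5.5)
Your proposal is correct and follows the paper's proof essentially step by step. It uses the same zero-padding and time-shift reduction placing $\mathcal R_s$ inside a $\theta_s(T_0)M_s$-neighbourhood of the compact set $\mathcal K_{T_0,s}$, the same two inclusions for \eqref{lem5.5 1}, and the same limit $M_{1/m}\to0$ for \eqref{lem5.5 2}; routing that last step through the identity rather than directly through the definition of $J$ is immaterial.
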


\begin{proof}
Fix $s\ge0$ and $T_0>0$. Consider an endpoint
$u_v(T,\bar u)\in\mathcal R_s$
with $\frac 12 \int_0^T
\| v(r) \|_{\ell^2}^2dr \le s$. 
If $0<T\le T_0$, 
then define
$\widetilde{v}: [0,T_0] \to \ell^2$ by
$$
\widetilde{v} (r)
=
\begin{cases}
0, \ & r \in [0, T_0 -T),\\
v(r-(T_0 -T)), \ & r\in [T_0 -T, T_0].
\end{cases}
$$
Then 
 $$
 \frac 12 \int_0^{T_0}
\| \widetilde{v} (r) \|^2_{\ell^2}dr
=
 \frac 12 \int_0^T
\| v(r) \|^2_{\ell^2}dr \le s,
$$
and hence
$ u_v(T, \bar u)
=u_{\widetilde{v}}
(T_0, \bar u)
\in \mathcal K_{T_0,s}$.

If $T>T_0$, define $\widetilde v(r)=v(T-T_0+r)$ for $0\le r\le T_0$.
Its energy is at most $s$. Restarting at time $T-T_0$ and applying
\eqref{lem5.1 2} and
\eqref{lem5.2 1} gives
$$
 \|u_v(T,\bar u)-u_{\widetilde v}(T_0,\bar u)\|
 \le\theta_s(T_0)\|u_v(T-T_0,\bar u)-\bar u\|
 \le\theta_s(T_0)M_s.
$$
Consequently,
$$
 \sup_{u\in\mathcal R_s}
           \operatorname{dist}_H(u,\mathcal K_{T_0,s})
     \le
     \theta_s(T_0)M_s\longrightarrow0\quad(T_0\to\infty).
$$
Given $r>0$, choose $T_0$ 
sufficiently large such that
$\theta_s(T_0)M_s<{\frac 12}r$.
Since 
$\mathcal K_{T_0,s}$
is compact, it has a finite cover of 
 open balls of
radius ${\frac 12} r$, and thus
  $\mathcal R_s$ 
  has a finite cover of open balls
  of radius of $r$.
  Since $r$ is arbitrary,
  we find that 
   $\mathcal R_s$ 
     is    totally bounded, and hence
   its closure is compact in $H$.

If $J(u)\le s$, then for every $m\ge1$ and every $\rho>0$ the definition
\eqref{rate_IM} provides an endpoint within distance $\rho$
of $u$ with energy less than $s+1/m$. Hence
$u\in\overline{\mathcal R_{s+1/m}}$ for every $m$.
Conversely, if $u$ belongs to every such closure, then for every $m$ and
$\rho>0$ there is an endpoint within distance $\rho$ of $u$ with energy
at most $s+1/m$. 
By taking the limit as  $\rho \to 0$ we get   $J(u)\le s+1/m$
for every $m\in \N$, and thus 
 $J(u)\le s$, which  proves
\eqref{lem5.5 1}.
Since $\overline{\mathcal R_{s+1/m}}$
is compact  for every $m$, by \eqref{lem5.5 1} we
conclude that $J^s$ is compact in $H$.

Next, we show \eqref{lem5.5 2}.
By definition of $J$, we have
 $J(\bar u)=0$.  On the other hand, 
 if $J(u)=0$,
 then by \eqref{rate_IM}  we find that
 for every $m\in \N$,
 there exists $T_m>0$
 and
 $v\in L^2(0,T_m; \ell^2)$
 such that
 ${\frac 12}
 \int_0^{T_m}  \| v(r)\|_{\ell^2} ^2 dr
 \le {\frac 1m}$
 and
 $\| u_v(T_m , \bar u) -u \|<{\frac 1m}$.
   By
\eqref{lem5.2 1}, we get
$$
\|u-\bar u\|
\le
\| u-u_v(T_m,\bar u)\|
+ \| u_v(T_m, \bar u)
-\bar u\|\le m^{-1}+M_{1/m}\longrightarrow0,
$$
which implies that  $u=\bar u$, as desired.
\end{proof}

We state explicitly the consequences of the all-initial-data LDP that
will be used. For every fixed $T>0$, $s>0$, $d>0$ and $\eta>0$, there
exists $\varepsilon_0=\varepsilon_0(T,s,d,\eta)>0$ such that
\begin{equation}\label{LDP s1}
 \sup_{u_0\in H}P\left(
  \operatorname{dist}_{C([0,T],H)}
       (u^\varepsilon(\cdot,u_0),I_{T,u_0}^s)\ge d\right)
 \le
 \exp\left(-\frac{s-\eta}{\varepsilon}\right),
 \qquad 
 0<\varepsilon<\varepsilon_0.
\end{equation}
After decreasing $\varepsilon_0$ if necessary, the same parameters give,
for every $u_0\in H$ and every control of energy at most $s$,
\begin{equation}\label{LDP s2}
 P\bigl(\|u^\varepsilon(\cdot,u_0)
 -u_v(\cdot,u_0)\|_{C([0,T],H)}<d\bigr)
 \ge
 \exp\left[-\frac1\varepsilon
           \left(\frac12\int_0^T\|v(r)\|_{\ell^2}^2\,dr+\eta\right)\right].
\end{equation}

\subsection{Large-deviation bounds for 
 invariant measures}

\begin{thm}\label{main1}
Suppose \eqref{f1}-\eqref{sig2},
\eqref{diss1} and \eqref{snoise} are 
fulfilled.
Then  the family
$\{\mu^\varepsilon:0<\varepsilon\ < \varepsilon_*\}$ satisfies the large
deviation principle on $H$ with  good rate
function $J$ given 
by \eqref{rate_IM}. Thus, for every open
$G\subseteq H$ and every closed $C\subseteq
 H$,
\begin{equation}\label{main1 1}
 \liminf_{\varepsilon
 \to 0}\varepsilon\log\mu^\varepsilon(G)
                 \ge-\inf_{u\in G}J(u),
\end{equation}
\begin{equation}\label{main1 2}
 \limsup_{\varepsilon\to
 0}\varepsilon\log\mu^\varepsilon(C)
                 \le-\inf_{u\in C}J(u).
\end{equation}
More precisely, for $u\in H$ and $\delta,\gamma>0$,
\begin{equation}\label{main1 3}
 \mu^\varepsilon(B_H(u,\delta))
       \ge\exp\left(-\frac{J(u)+\gamma}
       {\varepsilon}\right)
\end{equation}
for all sufficiently small $\varepsilon$, with $e^{-\infty}=0$; and for
$s>0$ and $\delta>0$,
\begin{equation}\label{main1 4}
 \limsup_{\varepsilon\downarrow0}
 \varepsilon\log
 \mu^\varepsilon\{u\in H:\operatorname{dist}_H
 (u,J^s)\ge\delta\}
          \le-s.
\end{equation}
\end{thm}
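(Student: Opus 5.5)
The plan is to prove the two sharp statements \eqref{main1 3} and \eqref{main1 4} and then obtain \eqref{main1 1}--\eqref{main1 2} from them in the standard way. Lemma \ref{lem5.5} makes $J$ a good rate function, so for a closed $C$ with $\inf_C J>s$ the compact set $J^s$ is disjoint from $C$. Compactness then gives $\delta>0$ with $C\subseteq\{\operatorname{dist}_H(\cdot,J^s)\ge\delta\}$, and \eqref{main1 4} yields \eqref{main1 2}. The lower bound \eqref{main1 1} follows from \eqref{main1 3} applied to a ball inside $G$ around a near-minimizer.

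\emph{Lower bound.} Fix $u$ with $J(u)<\infty$ and $\delta,\gamma>0$. Using \eqref{rate_IM}, I choose $T$ and $v$ with energy at most $J(u)+\gamma/3$ and $\|u_v(T,\bar u)-u\|<\delta/3$. By invariance,
\[
\mu^\varepsilon(B_H(u,\delta))=\int_H P(u^\varepsilon(T,z)\in B_H(u,\delta))\,\mu^\varepsilon(dz).
\]
I restrict the integral to $z\in B_H(\bar u,r)$. Lemma \ref{lem5.4} gives $\mu^\varepsilon(B_H(\bar u,r))\ge 1/2$ for small $\varepsilon$. For such $z$, Lemma \ref{lem5.1} gives $\|u_v(T,z)-u_v(T,\bar u)\|\le\theta_s(0)r<\delta/3$ once $r$ is small. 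Then \eqref{LDP s2}, applied with $d=\delta/3$ uniformly in $u_0=z$, gives
\[
P(u^\varepsilon(T,z)\in B_H(u,\delta))\ge e^{-(J(u)+2\gamma/3)/\varepsilon}.
\]
Multiplying by $1/2$ and absorbing the factor into $\gamma/3$ gives \eqref{main1 3}.

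\emph{Upper bound.} This is the main obstacle. Fix $s,\delta$ and $\eta>0$. I pick $T_0$ large so that $\theta_{s}(T_0)\le\theta<1$, with $\theta_{s}(T_0)M_{s}$ and related errors below $\delta/4$. Set $N_\varepsilon=\lceil 1/\varepsilon\rceil$. Stationarity lets me write
\[
\mu^\varepsilon(A)=\int P(u^\varepsilon(N_\varepsilon T_0,z)\in A)\,\mu^\varepsilon(dz),
\qquad A=\{\operatorname{dist}_H(\cdot,J^s)\ge\delta\}.
\]
On each interval $[kT_0,(k+1)T_0]$, the Markov property \eqref{markov1} and \eqref{LDP s1}, applied uniformly over the random starting state, show that the path is within $d$ of $I^{s'}_{T_0,u^\varepsilon(kT_0)}$ except on an event of probability at most $e^{-(s'-\eta)/\varepsilon}$. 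I use a slightly larger $s'=s+\eta$ here, so that the final closures land in $J^s$ via \eqref{lem5.5 1}.

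The delicate bookkeeping is that the total action over all intervals is not bounded by $s$; only each interval's action is. I will instead exploit the contraction. If on the last interval the path follows a controlled path of energy at most $s'$ from a state $y$, then \eqref{lem5.2 2} and \eqref{lem5.1 2} compare $u_v(T_0,y)$ with $u_v(T_0,\bar u)\in\mathcal K_{T_0,s'}\subseteq\mathcal R_{s'}$, with error $\theta_{s'}(T_0)\|y-\bar u\|$. So it suffices that $\|y-\bar u\|$ is not too large. I will control $y=u^\varepsilon((N_\varepsilon-1)T_0,z)$ by iterating the good events: on each good interval, \eqref{lem5.2 2} with controls of energy at most $s'$ contracts the distance to $\bar u$ by $\theta$ up to additive $M_{s'}+d$.

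After $N_\varepsilon$ good steps, the distance is at most $\theta^{N_\varepsilon}\|z-\bar u\|+C$. I expect this additive constant to be the difficulty. The fix I will try is to bound the last-step error by $\theta_{s'}(T_0)(C+\theta^{N_\varepsilon}\|z-\bar u\|)$ and to choose $T_0$ so that $\theta_{s'}(T_0)C<\delta/4$; this is possible since $C$ depends on $s'$ but not on $T_0$, once $\theta\le 1/2$. The bad set is then contained in the union of the failure events, of total probability at most $N_\varepsilon e^{-(s'-\eta)/\varepsilon}$, together with $\{\theta^{N_\varepsilon}\|z-\bar u\|\ge c\delta\}$. By Chebyshev and \eqref{leem5.4 1}, the latter has $\mu^\varepsilon$-measure at most $C_\delta\varepsilon\theta^{2N_\varepsilon}$, which is exponentially small at rate $2|\log\theta|/T_0$ per unit $1/\varepsilon$. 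This exceeds $s$ once $\theta$ is made small, by enlarging $T_0$. Finally I let $\eta\to0$.
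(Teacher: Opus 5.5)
Your lower bound, your derivation of \eqref{main1 1}--\eqref{main1 2} from \eqref{main1 3}--\eqref{main1 4}, and the skeleton of your upper bound all follow the paper's argument. That skeleton consists of the one-step contraction $r_k\le\theta r_{k-1}+B$ iterated over order $\varepsilon^{-1}$ intervals, a union bound on failures via the Markov property and \eqref{LDP s1} uniformly over all of $H$, and Chebyshev with \eqref{leem5.4 1} for the initial state.

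The one real error is the switch to $s'=s+\eta$, which rests on a reversed inclusion. Identity \eqref{lem5.5 1} gives $J^s\subseteq\overline{\mathcal R_{s'}}$ for $s'>s$, not $\overline{\mathcal R_{s'}}\subseteq J^s$. So endpoints $u_v(T_0,\bar u)$ of energy at most $s'$ are only known to lie in $J^{s'}$. Your scheme therefore bounds $\mu^\varepsilon\{\operatorname{dist}_H(\cdot,J^{s+\eta})\ge\delta\}$, and this is a \emph{subset} of the set in \eqref{main1 4}. Letting $\eta\to0$ afterwards does not by itself recover \eqref{main1 4}, since your estimates are not uniform in $\eta$.

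The enlargement is unnecessary. Directly from \eqref{rate_IM}, every endpoint reached from $\bar u$ with energy at most $s$ has $J\le s$, i.e.\ $\mathcal R_s\subseteq J^s$. So you can work at level $s$ throughout, accept the factor $e^{-(s-\eta)/\varepsilon}$ from \eqref{LDP s1}, and send $\eta\to0$ only at the very end, as the paper does. If you insist on $s'$, you must add a compactness step: $J^{s+\eta}\subseteq\{\operatorname{dist}_H(\cdot,J^s)<\delta/2\}$ for small $\eta$, which yields $\{\operatorname{dist}_H(\cdot,J^s)\ge\delta\}\subseteq\{\operatorname{dist}_H(\cdot,J^{s+\eta})\ge\delta/2\}$.

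Two smaller points. First, with $N_\varepsilon=\lceil 1/\varepsilon\rceil$ you get $\theta^{2N_\varepsilon}\le e^{-2|\log\theta|/\varepsilon}$. So the rate of the Chebyshev term is $2|\log\theta|=aT_0-2L_\sigma s/a$, not $2|\log\theta|/T_0$. Only the former grows without bound in $T_0$; the latter stays below $a$. Your conclusion that the rate exceeds $s$ for large $T_0$ is therefore right only with the corrected formula. The paper sidesteps this by taking $N_\varepsilon=\lceil c_0/\varepsilon\rceil$ with $c_0=(s+1)/(2|\log\theta|)$, which needs nothing beyond $\theta<1$. Your choice also works, but it forces $\theta$ below $e^{-s/2}$.

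Second, \eqref{markov1} is a two-time statement. Rather than conditioning on a path event on $[kT_0,(k+1)T_0]$, encode the good event through its endpoint consequences. Use a Borel set $\Gamma\subset H\times H$ in the pair $(u^\varepsilon((k-1)T_0),u^\varepsilon(kT_0))$, whose $P^\varepsilon_{T_0}$-probability from every deterministic start is bounded by the LDP failure probability. This is how the paper applies the Markov property at unrestricted intermediate states.
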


\begin{proof}
\emph{Lower bound.}
Fix $u\in H$ with $J(u)<\infty$ and $\delta,\gamma>0$.
By \eqref{rate_IM}, 
there exists  $T>0$ and $v\in L^2(0,T;l^2)$
such that
\begin{equation}\label{main1 p1}
 \frac12\int_0^T\|v(r)\|_{\ell^2}^2\,dr<J(u)+
 {\frac \gamma 4},
 \qquad
 \|u_v(T,\bar u)-u\|<{ \frac{\delta} 4}.
\end{equation}
Set $s=J(u)+\gamma/2$ and choose $r>0$ 
such that 
$\theta_s(T)r\le\delta/4$. Then,
by 
\eqref{lem5.1 2} and \eqref{main1 p1}  we have
\be\label{main1 p2}
 \|u_v(T,u_0)-u\|
 \le
  \|u_v(T,u_0)- u_v(T, \bar u)\| 
  + \|u_v(T, \bar u)-u\|  <\frac\delta2,
          \qquad u_0\in B_H(\bar u,r).
\ee
By \eqref{main1 p2} we get
for every 
$u_0\in B_H(\bar u,r)$,
$$
\left \{ \|u^\eps (\cdot, u_0) -u_v (\cdot, u_0)
\|_{C([0,T], H)}
<{\frac 12} \delta
\right \}
\subseteq
\left \{
\|
u^\eps (T, u_0) -u\|
< \delta
\right \},
$$
which along with
\eqref{LDP s2} 
for $d={\frac 12} \delta$
and $\eta ={\frac 14}
\gamma $
shows that
for every $u_0\in B_H(\bar u,r)$ and sufficiently small $\varepsilon$,
$$
 P_T^\varepsilon(u_0,B_H(u,\delta))
        \ge\exp\left(-\frac{J(u)+\gamma/2}{\varepsilon}\right).
$$
By the invariance of $\mu^\eps$ we have
\be\label{main1 p3}
 \mu^\varepsilon(B_H(u,\delta))
 =\int_H P_T^\varepsilon(u_0,B_H(u,\delta))\,\mu^\varepsilon(du_0)
 \ge\mu^\varepsilon(B_H(\bar u,r))
             \exp\left(-\frac{J(u)+\gamma/2}{\varepsilon}\right).
\ee
By \eqref{lem5.4 2}, 
$\mu^\varepsilon(B_H(\bar u,r)) \ge {\frac 12}$
 for all
sufficiently small $\varepsilon$. If also
$\varepsilon\le\gamma/(2\log2)$, then
$\frac12e^{\gamma/(2\varepsilon)}\ge1$,
which along with \eqref{main1 p3}
 proves
\eqref{main1 3}. When $J(u)=\infty$, that inequality is immediate.
If $G$ is an open set in $H$, then for every
$u\in G$, there exists $\delta>0$ such that
$B_H(u, \delta)  \in G$.
First taking
the logarithm 
of 
\eqref{main1 3}
and then   the supremum  over 
all  $u\in G$ we get
\eqref{main1 1}.

\medskip
\emph{Upper bound: a fixed-time estimate for all initial states.}
Fix $s>0$, $\delta>0$, and set
\begin{equation}\label{main1 p5}
 d=\frac\delta4,\qquad B=M_s+d.
\end{equation}
Since $M_s$ is independent of the time horizon and $\theta_s(T)\to0$,
by \eqref{main1 p5}
there exists 
$T>0$  such  that
 with $\theta=\theta_s(T)$,
\begin{equation}\label{main1 p6}
 0<\theta<1,\qquad \frac{\theta B}{1-\theta}\le\frac\delta4.
\end{equation}
 All of $T,d,B,\theta$ are now fixed
independently of $\varepsilon$ and of the number of time intervals used
below.

Let $u_0\in H$ and $\phi\in I_{T,u_0}^s$.  Then 
there exists 
 $v\in L^2(0,T;
\ell^2)$ with
${\frac 12} \int_0^T \| v(r)\|^2_{\ell^2} dr
\le s$ such that $\phi = u_v(\cdot, u_0)$.
By 
\eqref{rate_IM} and \eqref{reach} we have
$u_v(T, \bar u)\in  
\mathcal R_s\subseteq J^s$. 
Thus
\eqref{lem5.1 2} and
\eqref{lem5.2 2} imply that
\begin{equation}\label{main1 p7}
 \operatorname{dist}_H(\phi(T),J^s)
     \le\theta\|u_0-\bar u\|,
 \qquad
 \|\phi(T)-\bar u\|\le\theta\|u_0-\bar u\|+M_s.
\end{equation}
If the random path $u^\varepsilon(\cdot,u_0)$ has distance less than $d$
from $I_{T,u_0}^s$, 
then there exists a path $\phi\in I_{T, u_0}^s$ 
such that 
$\|u^\varepsilon-\phi\|_{C([0,T],H)}<d$. 
By the
 triangle inequality,
 \eqref{main1 p5}  and
\eqref{main1 p7}  we get 
that if
$\operatorname{dist}_{C([0,T], H)}
(u^\varepsilon(\cdot,u_0), I_{T, u_0}^s)
<d$,  then
 \begin{equation}\label{main1 p8}
 \begin{split}
 \|u^\varepsilon(T,u_0)-\bar u\|
                  &\le\theta\|u_0-\bar u\|+B,\\
 \operatorname{dist}_H(u^\varepsilon(T,u_0),J^s)
                  &\le\theta\|u_0-\bar u\|+d.
 \end{split}
\end{equation}

The distance function $D_s(z)=\operatorname{dist}_H(z,J^s)$ is
$1$-Lipschitz, since $J^s$ is nonempty. Define the open subset
$\Gamma\subset H\times H$ by
\begin{equation}\label{main1 p9}
 \begin{split}
 \Gamma=\{(u_0,z):\;&\|z-\bar u\|>\theta\|u_0-\bar u\|+B\ 
             \ \  \text{or }\  D_s(z)>\theta\|u_0-\bar u\|+d\},
 \end{split}
\end{equation}
and put
\begin{equation}\label{main1 p10}
 q_\varepsilon(u_0)
       =\int_H\mathbf1_\Gamma(u_0,z)P_T^\varepsilon(u_0,dz).
\end{equation}
By
\eqref{LDP s1} and  \eqref{main1 p8}-\eqref{main1 p10}
we obtain, 
for
any fixed $\eta\in(0,s)$,
\begin{equation}\label{main1 p12}
 q_\varepsilon(u_0)
 \le P\left(\operatorname{dist}_{C([0,T],H)}
           (u^\varepsilon(\cdot,u_0),I_{T,u_0}^s)\ge d\right)
 \le p_\varepsilon,
 \quad
 p_\varepsilon:=\exp\left(-\frac{s-\eta}{\varepsilon}\right),
\end{equation}
for every $u_0\in H$ and all $0<\varepsilon<\varepsilon_0(T,s,d,\eta)$.
 
\medskip
\emph{Iteration at stationary states.}
Let $\xi^\varepsilon\in L^2(\Omega, \calf_0; H) $
with law $\mu^\eps$ where $\mu^\eps$ is the invariant measure.
Then the solution $u^\eps (t,
\xi^\eps)$  has law $\mu^\eps$ for all $t\ge 0$.
For convenience, we write
$u^\eps(t) =u^\eps (t, \xi^\eps)$ and define
$$
 r_k=\|u^\varepsilon(kT)-\bar u\|,\qquad k\ge0.
$$
For $k\ge1$, let $\mathcal G_k$ be the event 
given by
\begin{equation}\label{main1 p13}
\mathcal G_k
=\left \{
 r_k\le\theta r_{k-1}+B,
 \quad
 D_s(u^\varepsilon(kT))\le\theta r_{k-1}+d
 \right \}.
\end{equation}
By \eqref{main1 p13} we have
\be\label{main1 p14}
 \mathbf1_{\mathcal G_k^c}
 =\mathbf1_\Gamma(u^\varepsilon((k-1)T),u^\varepsilon(kT)).
 \ee
Applying  \eqref{markov1} with $h=\mathbf1_\Gamma$,
by \eqref{main1 p10}, \eqref{main1 p12} and \eqref{main1 p14}
we get
\begin{equation}\label{main1 p20}
 \begin{split}
 P(\mathcal G_k^c\mid\mathcal F_{(k-1)T})
 =q_\varepsilon(u^\varepsilon((k-1)T))
 \le p_\varepsilon,
 \qquad P\hbox{-almost surely}.
 \end{split}
\end{equation}
The intermediate state is not restricted to a bounded or compact set.
The uniform estimate for every deterministic state in
\eqref{main1 p12} is exactly what permits this substitution. For
fixed $\varepsilon$, 
\eqref{main1 p20}
 may be taken to hold
simultaneously for all $k\ge1$ by discarding a countable union of null
sets.

Taking expectations in \eqref{main1 p20},
we obtain  for every integer $N\ge1$,
\begin{equation}\label{main1 p21}
 P\left(\bigcup_{k=1}^N\mathcal G_k^c\right)
 \le\sum_{k=1}^NE\bigl[P(\mathcal G_k^c\mid\mathcal F_{(k-1)T})\bigr]
 \le Np_\varepsilon.
\end{equation}
 On
$\bigcap_{k=1}^N\mathcal G_k$, iteration of 
$r_k\le\theta r_{k-1}+B$ for $1\le k \le N-1$  yields
$$
 r_{N-1}\le\theta^{N-1}r_0
             +B\sum_{j=0}^{N-2}\theta^j
 =\theta^{N-1}r_0+B\frac{1-\theta^{N-1}}{1-\theta},
$$
with the sum empty when $N=1$,
which along with 
\eqref{main1 p5}-\eqref{main1 p6}
and \eqref{main1 p13} implies that 
\begin{equation}\label{main1 p22}
 D_s(u^\varepsilon(NT))
 \le\theta^Nr_0+\frac{\theta B}{1-\theta}+d
 \le\theta^Nr_0+\frac\delta2.
\end{equation}
It follows from \eqref{main1 p22} that
if 
 $r_0\le\delta/(4\theta^N)$, 
 then
 on
$\bigcap_{k=1}^N\mathcal G_k$,
$$
 D_s(u^\varepsilon(NT))
 \le {\frac 14}\delta + {\frac 12}\delta 
  <\delta,
$$
which implies that
$$
 \{D_s(u^\varepsilon(NT))\ge\delta\}
 \subset\bigcup_{k=1}^N\mathcal G_k^c
       \ \cup\ \{r_0>\delta/(4\theta^N)\}.
$$ 
Stationarity, \eqref{main1 p21} and
\eqref{leem5.4 1} consequently imply
\begin{align}\label{main1 p23}
 &\mu^\varepsilon\{u:D_s(u)\ge\delta\}
 = P 
 \{D_s( u^\eps (NT) )\ge\delta\} 
 \notag\\
  &\le N\exp\left(-\frac{s-\eta}{\varepsilon}\right)
       +\mu^\varepsilon\{u:\|u-\bar u\|>\delta/(4\theta^N)\}\notag\\
 &\le N\exp\left(-\frac{s-\eta}{\varepsilon}\right)
       +\frac{16\theta^{2N}}{\delta^2}
                    \int_H\|u-\bar u\|^2\,\mu^\varepsilon(du)\notag\\
 &\le N\exp\left(-\frac{s-\eta}{\varepsilon}\right)
       +\frac{32\varepsilon b^2}{a\delta^2}\theta^{2N}.
\end{align}

\medskip
\emph{Choice of the number of intervals.}
Since $0<\theta<1$, let
\begin{equation}\label{main1 p24}
 c_0=\frac{s+1}{2|\log\theta|}>0,
 \qquad
 N_\varepsilon=\left\lceil\frac{c_0}{\varepsilon}\right\rceil,
\end{equation}
where  $\lceil s\rceil$ is the smallest integer
which is greater 
than or equal to $s$.
Then 
by \eqref{main1 p24}
we have
 $c_0/\varepsilon\le N_\varepsilon<c_0/\varepsilon+1$, and
\begin{equation}\label{main1 p30}
 \theta^{2N_\varepsilon}
   =\exp(-2|\log\theta|N_\varepsilon)
   \le\exp\left(-\frac{s+1}{\varepsilon}\right).
\end{equation}
For $0<\varepsilon\le1$,
\begin{equation}\label{main1 p31}
 0\le\varepsilon\log N_\varepsilon
 \le\varepsilon\log(c_0+1)+\varepsilon\log(1/\varepsilon)
 \longrightarrow0, \quad \text{as} \  \eps \to 0.
\end{equation}
 
Set $K_\delta=32b^2/(a\delta^2)$. 
By \eqref{main1 p23}-\eqref{main1 p30}
we get
\begin{align}\label{main1 p32}
 \mu^\varepsilon\{u:D_s(u)\ge\delta\}
 &\le N_\varepsilon e^{-(s-\eta)/\varepsilon}
                  +K_\delta\varepsilon e^{-(s+1)/\varepsilon}\notag\\
 &=e^{-(s-\eta)/\varepsilon}
      \left[N_\varepsilon+
              K_\delta\varepsilon e^{-(1+\eta)/\varepsilon}\right]\notag\\
 &\le e^{-(s-\eta)/\varepsilon}
                  N_\varepsilon(1+K_\delta\varepsilon).
\end{align}
The last inequality uses $N_\varepsilon\ge1$ and
$e^{-(1+\eta)/\varepsilon}\le1$. Taking logarithms, 
and using $\log(1+x)\le x$ for
$x\ge0$, we obtain from \eqref{main1 p32} that
\begin{equation}\label{main1 p33}
 \varepsilon\log\mu^\varepsilon\{u:D_s(u)\ge\delta\}
 \le-(s-\eta)+\varepsilon\log N_\varepsilon+K_\delta\varepsilon^2.
\end{equation}
 First taking the limit of \eqref{main1 p33} as
$\varepsilon \to 0$ and then $\eta \to 0$,
by \eqref{main1 p31} we obtain 
\eqref{main1 4}. Throughout this argument the LDP is applied only
on the fixed interval $[0,T]$; the interval of length $N_\varepsilon T$
is handled by the Markov property, not by an LDP
on a growing time horizon.

\medskip
\emph{Closed-set upper bound.}
Let $C$ be a  closed nonempty  subset of $H$,
and  $c=\inf_{u\in C}J(u)$. If $c=0$, then
\eqref{main1 2} follows from $\mu^\varepsilon(C)\le1$.
If $c>0$, fix any finite $s$ with $0<s<c$. The sets $C$ and $J^s$ are
disjoint, and $J^s$ is nonempty and compact. Define
\begin{equation}\label{main1 p40}
 \delta_s:
       =\inf_{\substack{u\in C\\z\in J^s}}\|u-z\|
       =\inf_{u\in C}\operatorname{dist}_H(u,J^s).
\end{equation}
This number is strictly positive since $C$ is closed and
$J^s$ is compact.
By  \eqref{main1 p40}, we see that  every $u\in C$
has distance at least $\delta_s$ from $J^s$, and thus
$$
 C\subseteq \{u:\operatorname{dist}_H(u,J^s)\ge\delta_s\},\qquad
 \mu^\varepsilon(C)
 \le\mu^\varepsilon\{u:\operatorname{dist}_H(u,J^s)\ge\delta_s\}.
$$
Apply \eqref{main1 4} with this fixed positive $\delta_s$ to obtain
$$
 \limsup_{\varepsilon\downarrow0}\varepsilon\log\mu^\varepsilon(C)
        \le-s.
$$
Letting
 $s \to c$ if $c<\infty$, and $s\to\infty$ if $c=\infty$,
 we get
 \eqref{main1 2}. 
 This completes the proof.
 \end{proof}

\begin{rem}\label{rem5.7}
The proof uses neither a compact embedding $V\hookrightarrow H$ on
$\mathbb R^n$ nor an assumed exponential-tightness estimate for
$\mu^\varepsilon$. Compactness of the quasipotential levels follows from
fixed-time controlled compactness. The stationary upper bound uses only
the ordinary second moment \eqref{leem5.4 1} together with
the uniform estimate \eqref{LDP s1} over all of $H$.
A bounded-initial-data estimate alone would not justify
\eqref{main1 p20} at unrestricted intermediate states without
additional information on excursions outside that bounded set. No
 Dembo-Zeitouni uniform LDP on a noncompact initial-state
set is used.
\end{rem}

\begin{rem}\label{rem5.8}
If $g=0$ and $F(x,0)=0$,
then the  uniqueness of solutions  
implies  $\bar u=0$. More generally,
if $\sigma(\bar u)=0$, then $b=0$  by
\eqref{diss2}, which along with
\eqref{Ms} and \eqref{lem5.2 1}
\ shows that every controlled
trajectory from $\bar u$ is constant. In this case
$J(\bar u)=0$, $J(u)=+\infty$ for $u\ne\bar u$, and
\eqref{leem5.4 1} gives $\mu^\varepsilon=\delta_{\bar u}$.
The theorem includes this degenerate case without modification.
\end{rem}

\appendix
  \section*{Appendix A}
  \setcounter{equation}{0}
  \renewcommand{\theequation}{A.\arabic{equation}}
\setcounter{thm}{0}
\renewcommand{\thethm}{A.\arabic{thm}}

  In this appendix, we prove
   Lemma \ref{energy}  which is crucial for deriving the uniform
   estimates of solutions of the controlled equation
   in $L^\infty (\R^n)$
   for all positive time.
   To that end, we first recall the translation continuity
   of functions in  a Bochner space.
   
   \begin{lem} \label{cbi}
   If $X$ is a Banach space  and $\zeta
   \in L^s(\R, X)$ with $1\le s<\infty$, then
   \be\label{cbi 1}
   \lim_{\tau \to
   0 }
   \int_{\R}
   \|
   \zeta (t \pm \tau) -\zeta (t)\|_X^s dt
   =0.
   \ee
   \end{lem}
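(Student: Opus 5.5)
The statement to prove is the translation continuity \eqref{cbi 1}. I will argue by density of nice functions in $L^s(\R,X)$ together with the translation invariance of Lebesgue measure.

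The first step is to reduce to continuous functions with compact support. Given $\varepsilon>0$, I choose $\phi\in C_c(\R,X)$ with $\|\zeta-\phi\|_{L^s(\R,X)}<\varepsilon$. Such a $\phi$ exists because simple functions $\sum_j \1_{A_j}x_j$, with $A_j$ of finite measure and $x_j\in X$, are dense in $L^s(\R,X)$ for $1\le s<\infty$. This density is the standard construction of the Bochner space, using strong measurability and dominated convergence. Each indicator $\1_{A_j}$ can then be approximated in $L^s(\R)$ by a scalar function in $C_c(\R)$, by the scalar density theorem. Multiplying by $x_j$ gives an approximation in $L^s(\R,X)$, with error at most $\|x_j\|_X\,\|\1_{A_j}-\psi_j\|_{L^s(\R)}$.

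The second step treats $\phi$ directly. Since $\phi$ is continuous with compact support, it is uniformly continuous. For $|\tau|\le1$, the function $\phi(\cdot\pm\tau)-\phi$ is supported in a fixed compact set $K'$, namely the $1$-neighbourhood of the support of $\phi$. It follows that
$$\int_\R\|\phi(t\pm\tau)-\phi(t)\|_X^s\,dt\le |K'|\,\sup_t\|\phi(t\pm\tau)-\phi(t)\|_X^s\to0 \quad\text{as } \tau\to0.$$

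The third step is the $\varepsilon/3$ argument. By translation invariance of Lebesgue measure, $\|\zeta(\cdot\pm\tau)-\phi(\cdot\pm\tau)\|_{L^s}=\|\zeta-\phi\|_{L^s}<\varepsilon$. Minkowski's inequality then gives
$$\|\zeta(\cdot\pm\tau)-\zeta\|_{L^s}\le 2\varepsilon+\|\phi(\cdot\pm\tau)-\phi\|_{L^s}.$$
Taking $\limsup_{\tau\to0}$ yields a bound of $2\varepsilon$, and since $\varepsilon$ is arbitrary, the limit is zero. This is exactly \eqref{cbi 1} after raising to the power $s$.

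The only step with real content is the density of $C_c(\R,X)$ in $L^s(\R,X)$. This is where $s<\infty$ is essential: the claim fails for $L^\infty$. It is also where I rely on strong measurability, so that $\zeta$ is an a.e. limit of simple functions with $\|\zeta_k\|_X\le 2\|\zeta\|_X$. Dominated convergence then upgrades this a.e. convergence to convergence in $L^s$.
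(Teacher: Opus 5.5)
Your proof is correct, but it takes a different route from the paper. The paper's proof is a one-line reduction. It quotes translation continuity on a bounded interval from \cite[Remark 3.2]{sim1}, namely $\|f(\cdot+h)-f\|_{L^s(0,T-h;X)}\to0$ for $f\in L^s(0,T;X)$. It then passes to all of $\R$ by approximating $\zeta$ with the truncations $\1_{[-R,R]}\zeta$ in $L^s(\R,X)$, handling the error terms with the same translation-invariance and Minkowski step as your third step.

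You instead prove the underlying fact from first principles, via the density of $C_c(\R,X)$ in $L^s(\R,X)$ and uniform continuity of the approximants. This buys two things. Because your approximants already have compact support, no separate truncation step is needed. Your argument also makes explicit the step with real content: passing from a.e. convergence of dominated simple functions to $L^s$ convergence, which is where $s<\infty$ and strong measurability enter. The paper leaves that step inside the citation. The paper's version is shorter; yours is the standard self-contained proof and would let the appendix avoid the external reference.
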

   \begin{proof}
   The result follows immediately from
   \cite[Remark 3.2]{sim1}  along  with a truncation argument
   to approximate
   $\zeta$ by $ 1_{[-R,R]} \zeta$
   in $L^s (\R, X)$ as $R\to \infty$.
    \end{proof}
    
    Note that \eqref{cbi 1} implies that
    \be\label{cbi 2}
     \lim_{\tau  \to
   0 } 
   \| \zeta (\cdot \pm \tau)
   -\zeta \|_{L^s(\R, X)}
   =0. 
   \ee

    \begin{proof}[Proof of Lemma \ref{energy}]
    Since 
   $r$ is absolutely
    continuous and strictly positive on 
  	$[0,T]$, we know that
	$r_0 = \min\limits_{t\in[0,T]}r(t)>0$.
	For the nonnegative  function $s_+=\max
	\{s, 0\}$, the following properties
	are useful:
		for  every $c>0$ and $s_1,s_2\in\R$,
  	$$
  	|(s_1-c)_+|\le |s_1|,
  	\qquad
  	|(s_1-c)_+-(s_2-c)_+|\le |s_1-s_2|,
  	$$
	which imply that
  	 for almost every $t\in(0,T)$,
  	\be\label{enA p1a1}
  		\|(w(t)-r(t))_+\|
		\le\|w(t)\|
		\quad \text{and} \quad
  		\|(w(t)-r(t))_+\|_{L^p(\R^n)}
		\le\|w(t)\|_{L^p(\R^n)}.
		\ee
		In addition,  we have
  		\be\label{enA p1a2}
		  		\|(w(t)-r(t))_+\|_V \le\|w(t)\|_V,
		\ee
		which follows from the fact that
  	$$
  		\int_{\R^n}\int_{\R^n}
  		\frac{|(w(t,x)-r(t))_+-(w(t,y)-r(t))_+|^2}
  		{|x-y|^{n+2\alpha}}\,dx\,dy
  	\le
  		\int_{\R^n}\int_{\R^n}
  		\frac{|w(t,x)-w(t,y)|^2}{|x-y|^{n+2\alpha}}\,dx\,dy.
  	$$
	By 
     \eqref{enA p1a1}-\eqref{enA p1a2}   we see that 
     $ (w-r)_+\in L^2(0,T;V)\cap L^p(0,T;L^p(\R^n)).$
       
  	On the set $\{x:w(t,x)>r(t)\}$, we have
  	$0\le w(t,x)-r(t)
	\le w(t) \le 
	 r(t)^{-1}|w(t,x)|^2$,  and hence for 
  	$t\in[0,T]$,
  	\begin{equation}\label{enA p1}
  		\int_{\R^n}(w(t,x)-r(t))_+\,dx
  		\le\frac{\|w(t)\|^2}{r(t)}
  		\le\frac{\|w\|_{C([0,T],H)}^2}{r_0}.
  	\end{equation}
	By \eqref{enA p1} we see that
	the last term in \eqref{energy1}
	  belongs to $L^1(0,T)$.
	 
	 Next we show
	  $(w-r)_+\in C([0,T],H)$.  Note that
	   if $c,d>0$ and
  	$\xi\in H$, then
  	$$
  	|(\xi-c)_+-(\xi-d)_+|
  	\le |c-d|\,\1_{\{\xi>\min\{c,d\}\}},
  	\qquad
  	|\{\xi>\min\{c,d\}\}|
  	\le\frac{\|\xi\|^2}{\min\{c,d\}^2},
  	$$
	which implies that
	 for $s,t\in[0,T]$,
  	\begin{align}
  		\|(w(t)-r(t))_+-(w(s)-r(s))_+\|
  \le\|w(t)-w(s)\|
  		+\frac{|r(t)-r(s)|}{r_0}\,\|w(s)\|.
  		\label{enA p2}
  	\end{align}
	Since $w\in C([0,T], H)$ and $r\in C([0,T], \R)$,
	it follows from \eqref{enA p2}
	that 
	 $(w-r)_+\in C([0,T],H)$, and hence 
	   	\begin{equation}
		\label{enA p3}
  		E\in C([0,T]),\qquad
  		0\le E(t)\le\frac12\|w\|_{C([0,T],H)}^2.
  	\end{equation}
  	
	We now prove the right-hand side of \eqref{energy1}
	is integrable on $(0,T)$. Indeed, by 
   H\"older's inequality
   and
    \eqref{enA p1a1}-\eqref{enA p1}  we have
  	\begin{align}
  		&\int_0^T
  		|(f_1(t),(w(t)-r(t))_+)_{(V^*,V)}|\,dt
		+\int_0^T\int_{\R^n}
  		|f_2(t,x)|(w(t,x)-r(t))_+\,dx\,dt\notag\\
  		&\quad+\int_0^T |r'(t)|
  		\int_{\R^n}(w(t,x)-r(t))_+\,dx\,dt\notag\\
  		&\le
  		\|f_1\|_{L^2(0,T;V^*)}\|w\|_{L^2(0,T;V)}
  		+\|f_2\|_{L^q(0,T;L^q(\R^n))}
  		\|w\|_{L^p(0,T;L^p(\R^n))}\nonumber\\
		&+
  		\frac{\|w\|_{C([0,T],H)}^2}{r_0}
  		\|r'\|_{L^1(0,T)}<\infty.
		\label{enA p3a}
  	\end{align}
  	
	It remains to prove that 
	the right-hand side of  \eqref{energy1}
	 is the  weak
  	derivative of $E$ on $(0,T)$.
	Note that
	\begin{equation}\label{enA p4}
  		w(t)-w(s)=\int_s^t f_1(\tau)\,d\tau
  		+\int_s^t f_2(\tau)\,d\tau
  		\quad\hbox{in }(V\cap L^p(\R^n))^*,
  		\qquad 0\le s\le t\le T,
  	\end{equation}
	and  
  	\begin{equation}\label{enA p5}
  		r(t)-r(s)=\int_s^t r'(\tau)\,d\tau,
  		\qquad 0\le s\le t\le T.
  	\end{equation}
  	
  	 For convenience, we extend
  	 $f_1$, $f_2$, and $r^\prime$ by zero outside $(0,T)$,
	 and use the same notation for both the original functions
	 and their extensions.
	 Since
	 $f_1 \in L^2(0,T; V^*)$, by \eqref{cbi 2} we have
		 $$
  		\left\|\frac1\delta\int_0^\delta
  		\left ( f_1(\cdot \pm  s)-f_1\right )
		\,ds\right\|_{L^2(\R;V^*)}
  	\le\frac1\delta\int_0^\delta
  		\|f_1(\cdot  \pm  s)-f_1\|_{L^2(\R;V^*)}\,ds
  		\longrightarrow 0,
		\quad \text{as }  \  \delta \to 0,
  	$$ 	 
	and hence
		\be\label{enA p10}
  			\frac1\delta\int_0^\delta f_1(\cdot \pm  s)\,ds
  			\longrightarrow f_1 \ 
			 \text{ in }    L^2(\R;V^*),
			 \quad \text{ as } \delta \to 0.
		\ee	
		Similarly, since $f_2 \in 
		L^q(0,T; L^q(\R^n))$ and
		$r^\prime \in L^1(0,T; \R)$, by \eqref{cbi 2}
		we have 
		   \be\label{enA p11}
		   \frac1\delta\int_0^\delta f_2(\cdot \pm  s)\,ds
		  \longrightarrow f_2 \quad 
  		\text{in }L^q(\R;L^q(\R^n)),  \quad \text{ as } \delta \to 0,
		\ee
		and
		\be\label{enA p12}
  			\frac1\delta\int_0^\delta r^\prime (\cdot \pm  s)\,ds
  			\longrightarrow  r^\prime  \ 
			 \text{ in }    L^1(\R; \R),
			 \quad \text{ as } \delta \to 0.
		\ee			  	
		
	 Note that the
   function $s\mapsto\frac12(s_+)^2$ is convex and continuously
  	differentiable with derivative $s_+$, and hence
	$$
	\frac12(s_2)_+^2-\frac12(s_1)_+^2
	=\int_{s_1}^{s_2}
	s_+ ds,	$$
	which implies that
	\begin{equation}\label{enA p13}
  		(s_1)_+(s_2-s_1)
  		\le \frac12(s_2)_+^2-\frac12(s_1)_+^2
  		\le (s_2)_+(s_2-s_1),
  		\qquad s_1,s_2\in\R.
  	\end{equation}
  	Let
	 $\delta\in(0,T)$ be fixed.
	 First  applying  \eqref{enA p13} with
  	$s_1=w(t,x)-r(t)$ and
  	$s_2=w(t+\delta,x)-r(t+\delta)$,
	and then   integrating over $\R^n$ 
	 we obtain
	 	\begin{align}
  		\frac{E(t+\delta)-E(t)}{\delta}
  		&\ge
  		\left(\frac{w(t+\delta)-w(t)}{\delta},
  		(w(t)-r(t))_+\right)\notag\\
  		&\quad-\frac{r(t+\delta)-r(t)}{\delta}
  		\int_{\R^n}(w(t,x)-r(t))_+\,dx,
  		\label{enA p14}
  	\end{align}
  	and
  	\begin{align}
  		\frac{E(t+\delta)-E(t)}{\delta}
  		&\le
  		\left(\frac{w(t+\delta)-w(t)}{\delta},
  		(w(t+\delta)-r(t+\delta))_+\right)\notag\\
  		&\quad-\frac{r(t+\delta)-r(t)}{\delta}
  		\int_{\R^n}(w(t+\delta,x)-r(t+\delta))_+\,dx.
  		\label{enA p15}
  	\end{align}
	
	Next, we will take the limit of
	\eqref{enA p14} and \eqref{enA p15}
	as $\delta \to 0$
	by using \eqref{enA p4}-\eqref{enA p5}.
	To that end, we  choose   
  	a nonnegative continuous 
	function
	$\eta$
	  on $(0,T)$ with
	compact support (i.e., $\eta\in C _c (0,T)$)
	  and extend it by zero to
  	$\R$. 
	Let  $\delta>0$ be a positive number less than
  	half the distance of $\operatorname{supp}\eta$ to
  	$\{0,T\}$.  Multiplying \eqref{enA p14} by $\eta(t)$,
  	by  \eqref{enA p4}-\eqref{enA p5},
  	we obtain
  	\begin{align}
  		&\int_0^{T-\delta}\eta(t)
  		\frac{E(t+\delta)-E(t)}{\delta}\,dt\notag\\
  		&\quad\ge
  		\int_0^T\eta(t)
  		\left(\frac1\delta\int_t^{t+\delta}f_1(s)\,ds,
  		(w(t)-r(t))_+\right)_{(V^*,V)}\,dt\notag\\
  		&\qquad+
  		\frac1\delta\int_0^T\eta(t)\int_t^{t+\delta}
  		\int_{\R^n}f_2(s,x)(w(t,x)-r(t))_+\,dx\,ds\,dt\notag\\
  		&\qquad-
  		\int_0^T\eta(t)
  		\left(\frac1\delta\int_t^{t+\delta}r'(s)\,ds\right)
  		\int_{\R^n}(w(t,x)-r(t))_+\,dx\,dt.
  		\label{enA p16}
  	\end{align}
	Similarly, 
	 by multiplying \eqref{enA p15} by $\eta(t)$,
  	using  \eqref{enA p4}-\eqref{enA p5},
	after change of variables
  	we obtain  	 
  	\begin{align}
  		&\int_0^{T-\delta}\eta(t)
  		\frac{E(t+\delta)-E(t)}{\delta}\,dt\notag\\
  		&\quad\le
  		\int_0^T\eta(t-\delta)
  		\left(\frac1\delta\int_{t-\delta}^t f_1(s)\,ds,
  		(w(t)-r(t))_+\right)_{(V^*,V)}\,dt\notag\\
  		&\qquad+
  		\frac1\delta\int_0^T\eta(t-\delta)\int_{t-\delta}^t
  		\int_{\R^n}f_2(s,x)(w(t,x)-r(t))_+\,dx\,ds\,dt\notag\\
  		&\qquad-
  		\int_0^T\eta(t-\delta)
  		\left(\frac1\delta\int_{t-\delta}^t r'(s)\,ds\right)
  		\int_{\R^n}(w(t,x)-r(t))_+\,dx\,dt.
  		\label{enA p17}
  	\end{align}
	Note that the
  	 support condition on $\eta$
	 is used in 
	 \eqref{enA p17}
	 to ensure  the corresponding   integration
  	limits.
	
	We now prove, as $\delta
	\to 0$, the right-hand sides
	of \eqref{enA p16} and
	\eqref{enA p17}
	have the same limit.
	We first deal with \eqref{enA p17}.
	By 
	\eqref{enA p1a2}
	and
		\eqref{enA p10} we have
	$$
	\left |
			\int_0^T
  			\left(\eta(t-\delta)\frac1\delta\int_{t-\delta}^t f_1(s)\,ds,
  			   			(w(t)-r(t))_+\right)_{(V^*,V)}dt
				-
				\int_0^T 
				\left ( 
				\eta(t)f_1(t),
				(w(t)-r(t))_+\right)_{(V^*,V)}dt	
				\right |	
				$$
	$$
	=
	\left |
			\int_0^T
  			\left(\eta(t-\delta)\frac1\delta\int_{t-\delta}^t f_1(s)\,ds
  			   	-\eta(t)f_1(t),
						(w(t)-r(t))_+\right)_{(V^*,V)}dt
				\right |	
				$$	
				$$
	\le
	\left |
			\int_0^T
  			\left(\eta(t-\delta)
			\left (
			\frac1\delta\int_{t-\delta}^t f_1(s)\,ds
  			   	-f_1(t)
				\right ),
						(w(t)-r(t))_+\right)_{(V^*,V)}dt
				\right |	
				$$	
			$$
	+
	\left |
			\int_0^T
			\left (
  			\left(\eta(t-\delta)
			-\eta (t) \right )
		f_1(t),
						(w(t)-r(t))_+\right)_{(V^*,V)}dt
				\right |	
				$$	
$$
\le 
\|\eta\|_{L^\infty(\R, \R)}
\|w\|_{L^2(0,T;V)}
			\left \|\frac1\delta\int_0^\delta f_1(\cdot-s)\,ds-f_1
  			\right\|_{L^2(\R;V^*)}
			$$						
		\be\label{enA p18}
+
\|\eta (\cdot -\delta)
 -\eta \|_{L^\infty(\R, \R)}
\|w\|_{L^2(0,T;V)}
			\left \|f_1
  			\right\|_{L^2(\R;V^*)}
			\to 0,
			\ee
			as $\delta \to 0$.

	Similarly,
	by 
	\eqref{enA p1a1}
	and
		\eqref{enA p11} we have
$$
	\left |
			\int_0^T
  			\left(\eta(t-\delta)\frac1\delta\int_{t-\delta}^t f_2(s)\,ds,
  			   			(w(t)-r(t))_+\right)_{(L^q,L^p)}dt
				-
				\int_0^T 
				\left ( 
				\eta(t)f_2(t),
				(w(t)-r(t))_+\right)_{(L^q,L^p)}dt	
				\right |	
				$$
				
	$$
	=
	\left |
			\int_0^T
  			\left(\eta(t-\delta)\frac1\delta\int_{t-\delta}^t f_2(s)\,ds
  			   	-\eta(t)f_2(t),
						(w(t)-r(t))_+\right)_{(L^q(\R^n),L^p(\R^n))}dt
				\right |	
				$$	
				$$
	\le
	\left |
			\int_0^T
  			\left(\eta(t-\delta)
			\left (
			\frac1\delta\int_{t-\delta}^t f_2(s)\,ds
  			   	-f_2(t)
				\right ),
						(w(t)-r(t))_+\right)
						_{(L^q(\R^n),L^p(\R^n))}dt
										\right |	
				$$	
			$$
	+
	\left |
			\int_0^T
			\left (
  			\left(\eta(t-\delta)
			-\eta (t) \right )
		f_2(t),
						(w(t)-r(t))_+\right)
						_{(L^q(\R^n),L^p(\R^n))}dt				\right |	
				$$	
$$
\le 
\|\eta\|_{L^\infty(\R, \R)}
\|w\|_{L^p(0,T;L^p(\R^n) )}
			\left \|\frac1\delta\int_0^\delta f_2(\cdot-s)\,ds-f_2
  			\right\|_{L^q(\R;L^q(\R^n) )}
			$$						
\be\label{enA p19}
+
\|\eta (\cdot -\delta)
 -\eta \|_{L^\infty(\R, \R)}
\|w\|_{L^p(0,T;L^p(\R^n) )}
			\left \|f_2
  			\right\|_{L^q(\R;L^q(\R^n) )}
			\to 0,
			\ee
			as $\delta \to 0$.

	 For the term involving $r^\prime$, 
	 by
	 \eqref{enA p1} and  \eqref{enA p12} we have
  	$$
  		\left |
		\int_0^T
  		\left (\eta(t-\delta)\frac1\delta\int_{t-\delta}^t r^\prime (s)\,ds
  		-\eta(t)r^\prime (t)\right )
  		\int_{\R^n}(w(t,x)-r(t))_+\,dx\,dt\right |
		$$
		$$
  		\le
  		r^{-1}_0 {\|w\|_{C([0,T],H)}^2}
		\|\eta\|_{L^\infty(\R, 
		\R)}
  		\left\|
		\frac1\delta\int_0^\delta r ^\prime (\cdot-s)\,ds-r^\prime
  		\right\|_{L^1(\R, \R)}
  		 $$
		\be\label{enA p20}
  		+
  		r^{-1}_0 {\|w\|_{C([0,T],H)}^2}
  		\|\eta(\cdot-\delta)-\eta\|_{L^\infty(\R, \R)}
  		\|r^\prime \|_{L^1(\R, \R)}
  	\to 0.
	\ee
	It follows from
	\eqref{enA p17}-\eqref{enA p20}
	that
	 \begin{align}
  		&\limsup_{\delta \to 0}
  		\int_0^{T-\delta}\eta(t)
  		\frac{E(t+\delta)-E(t)}{\delta}\,dt
  	\le
  		\int_0^T\eta(t)(f_1(t),(w(t)-r(t))_+)_{(V^*,V)}\,dt\notag\\
  		&\qquad+\int_0^T\eta(t)\int_{\R^n}
  		f_2(t,x)(w(t,x)-r(t))_+\,dx\,dt
		-\int_0^T\eta(t)r^\prime (t)
  		\int_{\R^n}(w(t,x)-r(t))_+\,dx\,dt.
  		\label{enA p30}
  	\end{align}
	Similarly, by taking the limit of \eqref{enA p16} as $\delta
	\to 0$, we  obtain  the reverse inequality of
	\eqref{enA p30}, and thus 
	for every  nonnegative $\eta
	\in C_c(0,T)$ we have
	 \begin{align}
  		&\lim_{\delta \to 0}
  		\int_0^{T-\delta}\eta(t)
  		\frac{E(t+\delta)-E(t)}{\delta}\,dt
  	=
  		\int_0^T\eta(t)(f_1(t),(w(t)-r(t))_+)_{(V^*,V)}\,dt\notag\\
  		&\qquad+\int_0^T\eta(t)\int_{\R^n}
  		f_2(t,x)(w(t,x)-r(t))_+\,dx\,dt
		-\int_0^T\eta(t)r^\prime (t)
  		\int_{\R^n}(w(t,x)-r(t))_+\,dx\,dt.
  		\label{enA p31}
  	\end{align}	
	
	Suppose $\eta \in C^\infty_c(0,T)$ is an arbitrary 
	smooth test function on $(0,T)$.
	Then $\eta =   \eta_+ - (-\eta)_+ $
	where $\eta_+$ and $(-\eta)_+$ are both
	nonnegative test functions  in $C_c(0,T)$.
	   Applying \eqref{enA p31} to these
  	two functions and subtracting
	the resulting equalities, we infer that
	  \eqref{enA p31}  is valid for every
  	$\eta\in C_c^\infty(0,T)$.
	
	On the other hand,
	for every $\eta\in C_c^\infty(0,T)$,
   by   change of variables
   and the  dominated convergence theorem
      we get
  	\begin{equation}\label{enA p32}
  		\int_0^{T-\delta}\eta(t)
  		\frac{E(t+\delta)-E(t)}{\delta}\,dt
  		=\int_0^T E(t)\frac{\eta(t-\delta)-\eta(t)}{\delta}\,dt
  		\longrightarrow-\int_0^T E(t)\eta'(t)\,dt.
  	\end{equation}
  	  It follows from \eqref{enA p31}-\eqref{enA p32} that
	  for every $\eta\in C_c^\infty(0,T)$,
	     	\begin{align}
  		-\int_0^T E(t)\eta'(t)\,dt
  		&=\int_0^T\eta(t)(f_1(t),(w(t)-r(t))_+)_{(V^*,V)}\,dt\notag\\
  		&\quad+\int_0^T\eta(t)\int_{\R^n}
  		f_2(t,x)(w(t,x)-r(t))_+\,dx\,dt\notag\\
  		&\quad-\int_0^T\eta(t)r'(t)
  		\int_{\R^n}(w(t,x)-r(t))_+\,dx\,dt,
  		\label{enA p33}
  	\end{align}
  	By \eqref{enA p3a}, the expression on the right-hand side
	of \eqref{enA p33} is the
  	pairing of $\eta$ with an $L^1(0,T)$ function. 
	 Since $E\in L^1(0,T)$, by 
  	\eqref{enA p33} we find that  
  	$E\in W^{1,1}(0,T)$ and
	satisfies the equation
	\eqref{energy1},
	which completes the proof
	of Lemma \ref{energy}.
 \end{proof}

%
%
%
%
%

\end{document}